\DeclareFontFamily{OT1}{pzc}{}
\DeclareFontShape{OT1}{pzc}{m}{it}{<-> s * [1.10] pzcmi7t}{}
\DeclareMathAlphabet{\mathpzc}{OT1}{pzc}{m}{it}
\let\originalleft\left
\let\originalright\right
\renewcommand{\left}{\mathopen{}\mathclose\bgroup\originalleft}
\renewcommand{\right}{\aftergroup\egroup\originalright}
\begin{document}

\newcommand{\bO}{{\bf 0}}
\newcommand{\co}{\mathpzc{o}}
\newcommand{\rD}{{\rm D}}
\newcommand{\ee}{\varepsilon}
\newcommand{\ri}{{\rm i}}

\newtheorem{theorem}{Theorem}[section]
\newtheorem{corollary}[theorem]{Corollary}
\newtheorem{lemma}[theorem]{Lemma}
\newtheorem{proposition}[theorem]{Proposition}

\theoremstyle{definition}
\newtheorem{definition}{Definition}[section]
\newtheorem{assumption}[definition]{Assumption}




\title{Fractional order induced bifurcations in Caputo-type denatured Morris-Lecar neurons}
\author{Indranil~Ghosh$^1$, Hammed Olawale~Fatoyinbo$^2$\\\\
$^1$School of Mathematical and Computational Sciences,\\
Massey University,\\
Palmerston North, 4410,\\
New Zealand\\\\
$^2$Department of Mathematical Sciences,\\
School of Engineering, Computer and Mathematical Sciences,\\
Auckland University of Technology,\\
Auckland, 1142,\\
New Zealand
}

\maketitle


\begin{abstract}

We set up a system of Caputo-type fractional differential equations for a reduced-order model known as the {\em denatured} Morris-Lecar (dML) neurons. This neuron model has a structural similarity to a FitzHugh-Nagumo type system. We explore both a single-cell isolated neuron and a two-coupled dimer that can have two different coupling strategies. The main purpose of this study is to report various oscillatory phenomena (tonic spiking, mixed-mode oscillation) and bifurcations (saddle-node and Hopf) that arise with variation of the order of the fractional operator and the magnitude of the coupling strength for the coupled system. Various closed-form solutions as functions of the system parameters are established that act as the necessary and sufficient conditions for the stability of the equilibrium point. The theoretical analysis are supported by rigorous numerical simulations.

\end{abstract}

\section{Introduction}
\label{sec:intro}
Dynamics of neurons are typically modeled using integer-order time differential operators. In 1952 Alan Hodgkin and Andrew Huxley published their seminal work~\citep{HoHu52} on a mathematical model detailing the activation and deactivation of a spiking neuron, constructed from experimental results collected from giant squid axons. In 1961 the Fitzhugh Nagumo model~\citep{Fi61} was constructed by simplifying a two-dimensional version of the Hodgkin-Huxley model. Then in 1981, Catherine Morris and Harold Lecar~\citep{MoLe81} put forward a two-dimensional reduced model that was able to reproduce the dynamics observed in the Hodgkin-Huxley model, besides exhibiting both class I and II excitability along with a wide array of interesting complex behaviors. In their textbook, Ermentrout and Terman~\citep{ErTe10} did a rigorous bifurcation analysis of the Morris-Lecar neuron model, exhibiting rich bifurcation patterns and realizing the intricate dynamics this model has to offer.

The purpose of this paper is to work with a variant of the Morris-Lecar model which was first studied by Schaeffer and Cain in their textbook~\citep{ScCa18}. They called this model the {\em denatured} Morris-Lecar (dML) model, which looks like
\begin{equation}
\label{eq:dML}
  \begin{aligned}
    \dot{x} &= x^2(1-x) - y + I, \\
    \dot{y} &= Ae^{\alpha x} - \gamma y.
\end{aligned}  
\end{equation}
Here, $x$ represents the voltage and $y$ represents the recovery-like variable. Variable $x$ has a cubic nonlinearity and demonstrates a positive feedback to the neurons leading to them firing, whereas variable $y$ has an exponential behavior demonstrating a negative feedback to the neurons, modeling the refractory period. We have four local parameters $I$, $A$, $\alpha$, and $\gamma$. Parameter $I$ is the external stimulation current and can be both positive and negative. Other parameters are all positive constants. 

Reduced order models like~\eqref{eq:dML} are computationally efficient and methodically capture the complex features exhibited by the highly nonlinear conductance-based models which possess multiple interacting variables and timescales. Model~\eqref{eq:dML} demonstrates similar dynamics as the original Morris-Lecar model and is a major motivation for this paper. Furthermore,~\eqref{eq:dML} has a structural analogy to the Fitzhugh-Nagumo neuron model, with the difference lying in their $y$-nullclines. For the Fitzhugh-Nagumo model, the $y$-nullcline is a straight line corresponding to the linear structure of the recovery-like variable, whereas for~\eqref{eq:dML} the $y$-nullcline curve upward corresponding to the exponential term. In 2022 Fatoyinbo {\em et al.}~\citep{FaMu22} employed a codimension-one and -two bifurcation analysis of~\eqref{eq:dML} and reported the appearance of saddle-node and Hopf bifurcations. They also showed that for a particular parameter regime, the system exhibits bistability. Codimension-two bifurcations like cusp bifurcation and generalised Hopf bifurcation were also reported.

All these studies, however, are modeled with standard integer-order ordinary differential equations (ODEs). In this paper, we take a deviation from studying a system governed by classical integer-order ODEs and instead study a system governed by Caputo-type fractional-order differential equations to incorporate memory effects into the system. Specifically, we first analyze the fractional order version of a single cell dML model~\eqref{eq:dML}. We then extend our analysis to a two-coupled system of identical dML neurons connected by a bidirectional synaptic coupling. We employ two different coupling schemes which we will elaborate on later in this text. The single-cell model is two-dimensional whereas the two-coupled system is four-dimensional in structure. 

The motivation behind studying fractional-order differential equations modeling neuron dynamics is heavily backed by studies undertaken in this direction. Anastasio~\citep{An94} suggested that the oculomotor integrator in the brain that controls eye movements, might be fractional-order. Lundstrom {\em et al.}~\citep{LuHi08} argued that there exists a multiple time scale adaptation in single rat neocortical neurons which is consistent with fractional order differential equations. Lundstrom and Richner~\citep{LuRi23} utilized a novel theoretical study and computational neuron models to realize how neural adaptation and fractional dynamics govern neural excitability in human EEG recordings. Vazquez-Guerrero {\em et al.}~\citep{VaTu24} showed that super-capacitors having fractional-order derivative and memcapacitive properties can be used to implement fractional-order spiking neurons. 

A suite of recent research has been published on fractional-order models of neuron dynamics, for example, fractional-order Hodgkin-Huxley model~\citep{ShEl11, NaSw14, AbFo22}, Fitzhugh-Nagumo model~\citep{FaAl23, YaSu24, KuEr25}, Hindmarsh-Rose model~\citep{Ka17, XiKa14, LiZh21}, and Morris-Lecar model~\citep{ShWa14, ShMo23, ChAl23}. A major point of reference for the analysis of the single cell dML model is the work by Sharma {\em et al.}~\citep{ShMo23}, where the authors first dealt with a Caputo type fractional order dynamics of two-dimensional type I and II Morris-Lecar neuron. Then they continued their analysis to a three-dimensional slow-fast system of the Morris-Lecar neuron. Finally, they build a random network of the two-dimensional Morris-Lecar neurons and study the complex dynamics of the same. The theoretical framework of the dynamics of the single-cell dML model of this paper is built on the work by Sharma {\em et al.}~\citep{ShMo23}. Then for the analysis of the two-coupled dimer of dML neurons, we follow the framework set in the work by Mondal {\em et al.}~\citep{MoKa24} where they closely analyzed the emergent dynamics of a single cell and a two-coupled Wilson-Cowan neuron network. Authors in \citep{ChAl23} studied discrete Caputo-type fractional order systems of conductance based Morris-Lecar neurons (two-dimensional single-cell, three-dimensional slow-fast single-cell, and a Erd\"os-Renyi random network of the neurons) and reported a plethora of dynamical responses. The difference between the two-dimensional model considered in the above two works and the dML neuron model~\eqref{eq:DML_2D} is that the dML neuron model is a simple reduced model of the Morris-Lecar system which has structural similarity to a FitzHugh-Nagumo type model, driving the complexity of the analysis (for example, establishing various closed-form necessary and sufficient conditions as functions of the local parameters) relatively easier to handle.

This paper is organised as follows: In section~\ref{sec:Frac} we discuss the preliminaries of Caputo-type fractional order differential operator, and the motivation behind modeling the dynamics of dML neurons with such fractional operators. We also state the fractional linearization theorem and how we are going to utilize it further in our analysis. In section~\ref{sec:Model} we first establish a system of Caputo-type fractional order differential equations modeling the dynamics of a single dML neuron. Then we take two of these identical neurons and couple them via two different synapse schemes: first that couples a bidirectional linear flow between the neurons and second that couples a bidirectional sigmoidal flow. The single-cell model is two-dimensional and the two-coupled model is four-dimensional in structure. Note that we only consider commensurate fractional-order systems throughout this study. In section~\ref{sec:dML_2D} a qualitative analysis of the two-dimensional system is provided along with rigorous nuemrics. We establish closed-form conditions for bifurcations like saddle-node and Hopf in terms of the local parameters of the system and the order of the fractional operators. This tells us how external stimulation current and memory effects (implemented via varying the fractional order) drive different complex regimes of the model. We support our analysis with numerics performed using \texttt{Julia} on the back-end and \texttt{Python} for the front-end plotting. We do the same thing for the coupled system in section~\ref{sec:dML_4D} and notice different dynamical behaviors like tonic-spiking, typical bursts, and mixed-mode oscillations. Finally, concluding remarks are provided in section~\ref{sec:Conclusions}.

\section{Fractional differential operator}
\label{sec:Frac}
Generally speaking, there exist three non-equivalent definitions of fractional differential operators: Gr\"unwald-Letnikov type, Riemann-Liouville type, and Caputo type. Our focus is Caputo type differential operator, which we use to study both the single-cell and two-coupled dML neurons. The Caputo type fractional differential operator~\citep{Ca67} is given by
\begin{align}
\label{eq:Caputo}
    ^C \mathcal{D}_{t_0}^\beta X(t) &= I_{t_0}^{n - \beta} X^{(n)}(t) \\
    &= \frac{1}{\Gamma(n - \beta)} \int_{t_0}^t \frac{X^{(n)}(\tau) d\tau}{(t-\tau)^{1+\beta - n}}.
\end{align}
In our problem setting we have $t_0=0$ and $n=1$. Here $C$ represents {\em Caputo}, and $\mathcal{D}_{t_0}^\beta$ is the time-fractional Caputo derivative having an order $\beta>0$ at time $t$. the order $\beta$ is also sometimes referred to as an {\em memory index}~\citep{MoKa24}. Initial time is indicated by $t_0$ and $X(\cdot) \in \mathbb{R}^N$ is a differentiable function. Furthermore, $I_{t_0}^{n - \beta}$ denotes the {\em Riemann-Liouville fractional integral}. For an order $\alpha$ and an arbitrary but fixed initial point $t_0$, the integral is defined by
\begin{align}
    \label{eq:RiemannLiouville}
    I^\beta_{t_0} X(t) = \frac{1}{\Gamma(\beta)} \int_{t_0}^t X(\tau) (t - \tau)^{\beta - 1} d\tau,
\end{align}
where $\Gamma(\cdot)$ is the {\em gamma} function. In definition~\eqref{eq:Caputo}, the Riemann-Liouville fractional integral is of order $n-\beta$. Also, $X^{(n)}(t)$ is the $n^{\rm th}$ order derivative of the function $X(t)$.

For our problem statement, we restrict $\beta \in [0, 1]$ and the initial time is set as $t_0 = 0$. We also set $n = 1$. The Caputo fractional-order differential equation then looks like
\begin{align}
\label{eq:Caputo2}
    ^C \mathcal{D}_{0}^\beta X(t)= \frac{1}{\Gamma(1 - \beta)} \int_{0}^t \frac{X^{(1)}(\tau) d\tau}{(t-\tau)^{\beta}}.
\end{align}
Caputo derivatives assimilate {\em memory} effects into the otherwise classical system where $\beta$ is an integer, meaning the fractional order differential operator $\mathcal{D}$ exhibits non-local properties. One advantage of the Caputo operator over other fractional order operators like the Gr\"unwald-Letnikov type or the Riemann-Liouville type is that the Caputo operator is more applicable to real-world physical and engineering problems in the sense that one needs to specify the initial conditions in terms of the integer-order derivatives, making the modeling tasks more tractable~\citep{Ka17}. Furthermore, the Caputo derivative of a constant is zero~\citep[Property 2.4]{LiDe07}, which is not true for the other type fractional derivatives. Some important reads on Caputo fractional derivative are Kilbas {\em et al.}~\citep{Ki06}, Podlubny~\citep{Po98}, and Diethelm~\citep{Di19}, among others.

Note that when $\beta \to 1$, the Caputo operator $^C \mathcal{D}_{0}^\beta X(t)$ converges to the standard derivative $\frac{dX(t)}{dt}$ having order $1$. The classical linearization theorem by Hartman and Grobman for standard integer-order dynamical systems states that the qualitative dynamics of a system in the neighborhood of a hyperbolic equilibrium point (with no eigenvalue having a zero real part) is similar to the dynamics of its linearization near the equilibrium point, making the linearisation an indispensable tool for analyzing dynamical systems~\citep{ArPl92}. This was extended for the case of fractional-order dynamical systems by Li and Ma~\citep{LiMa13}. The linearization at an equilibrium point of $^C \mathcal{D}_{0}^\beta X(t)$ can be written as
\begin{align}
\label{eq:linearization}
    ^C \mathcal{D}_{0}^\beta X = J(X^*)X,
\end{align}
where $X \in \mathbb{R}^N$ and $X^*$ is an equilibrium point of the $N$-dimensional fractional-order system. Furthermore, $J$ is the Jacobian matrix of the system evaluated at $X^*$. Following Matignon~\citep{Ma96}, the system~\eqref{eq:linearization} is asymptotically stable if and only if $|\arg(\lambda)| >\frac{\beta \pi}{2}$, for all the complex eigenvalues $\lambda$ of the Jacobian matrix $J \in \mathbb{R}^{N \times N}$. The equilibrium of the system is a saddle if $|\arg(\lambda)| <\frac{\beta \pi}{2}$~\citep{AhEl06}. Kaslik ~\citep{Ka17} showed that for $N=2$, any eigenvalue $\lambda$ of the Jacobian matrix $J(X^*)$ evaluated at the equilibrium point is asymptotically stable if and only if
\begin{align}
\label{eq:StabCond}  
\delta(X^*) = \det(J(X^*))>0, \qquad {\rm and} \qquad \tau(X^*) = {\rm trace}(J(X^*))< 2\sqrt{\delta(X^*)}\cos\bigg(\frac{\beta \pi}{2} \bigg).
\end{align}

\section{Model Specification}
\label{sec:Model}
It should be noted that we only consider commensurate models of fractional order differential equations in this paper. That means, the fractional order $\beta$ is same for all the state variables governing the dynamics of the dML neurons. The two-dimensional commensurate fractional order single-cell model of a dML neuron is given by

\begin{equation}
\label{eq:DML_2D}
\begin{aligned}
    ^C \mathcal{D}_{0}^\beta x &= x^2(1-x) - y +I, \\
    ^C \mathcal{D}_{0}^\beta y &= A e^{\alpha x} - \gamma y.\\
\end{aligned}
\end{equation}

The biophysical meaning of the state variables $(x, y)$ and the parameters $A$, $\alpha$, $\gamma$, and $I$ remain the same as~\eqref{eq:dML}. The difference lies in the order of the differential operators which could take any value $\beta \in (0, 1]$, that is~\eqref{eq:dML} is a special case of~\eqref{eq:DML_2D} when $\beta = 1$. We then extend our single-cell model~\eqref{eq:DML_2D} and couple two of them through a coupling scheme, now producing a four-dimensional system of fractional order differential equations. Note that we consider the two neurons to be identical, in the sense that both their local parameters are kept the same to each other. Also, the coupling is bidirectional and is between the voltage variables $x$. 

We employ two coupling schemes. The first one is a simple coupling strategy that quantifies a linear flow between the two neurons, modeled by $\theta(x_j - x_i)$, where $\theta>0$ is the magnitude of the coupling strength and $x_j - x_i$ denotes a flow from neuron $j$ to $i$. The model is represented by

\begin{equation}
\label{eq:DML_4D}
\begin{aligned}
    ^C \mathcal{D}_{0}^\beta x_1 &= x_1^2(1-x_1) - y_1 + I +\theta(x_2 - x_1), \\
    ^C \mathcal{D}_{0}^\beta y_1 &= A e^{\alpha x_1} - \gamma y_1,\\
    ^C \mathcal{D}_{0}^\beta x_2 &= x_2^2(1-x_2) - y_2 + I +\theta(x_1 - x_2), \\
    ^C \mathcal{D}_{0}^\beta y_2 &= A e^{\alpha x_2} - \gamma y_2.\\
\end{aligned}
\end{equation}We employ the second coupling scheme following the work by Belykh {\em et al.}~\citep{BeDe05}. This coupling scheme is modeled by the sigmoidal function and looks like $\sigma \frac{v_s - x_i}{1+e^{-\lambda(x_j - q)}}$. Physically, this means the effect of neuron $j$ on $i$. Parameter $\sigma>0$ is the magnitude of the coupling strength, and $v_s$ is the reversal potential. For the synapse to be excitatory, the reversal potential $v_s$ should be greater than $x(t)$ for all $x$ and $t$. The sigmoidal function is given by $\zeta(x_j) = \frac{1}{1+e^{-\lambda(x_j - q)}}$ where the constant $\lambda>0$ is the slope and $q$ is the synaptic threshold. Somers and Kopell~\citep{SoKo93} called this coupling scheme a {\em fast threshold modulation}. The four-dimensional model coupled via the sigmoidal function looks like

\begin{equation}
\label{eq:dML_4D_chemical}
\begin{aligned}
    ^C \mathcal{D}_{0}^\beta x_1 &= x_1^2(1-x_1) - y_1 + I +\sigma\frac{v_s - x_1}{1+e^{-\lambda(x_2 - q)}}, \\
    ^C \mathcal{D}_{0}^\beta y_1 &= A e^{\alpha x_1} - \gamma y_1,\\
    ^C \mathcal{D}_{0}^\beta x_2 &= x_2^2(1-x_2) - y_2 + I +\sigma\frac{v_s - x_2}{1+e^{-\lambda(x_1 - q)}}, \\
    ^C \mathcal{D}_{0}^\beta y_2 &= A e^{\alpha x_2} - \gamma y_2.\\
\end{aligned}
\end{equation} 

In the next two sections, we unfold how the memory index $\beta$ induces important bifurcation patterns in the single-cell model~\eqref{eq:DML_2D} and the two-coupled models~\eqref{eq:DML_4D}, and~\eqref{eq:dML_4D_chemical}.

\section{Theoretical and numerical analysis of the two-dimensional single-cell dML model}
\label{sec:dML_2D}
We cover the technical details of~\eqref{eq:DML_2D} in this section by closely following the qualitative approach undertaken by Sharma {\em et al.}~\citep{ShMo23}. To start with, we first look into the equilibrium points of ~\eqref{eq:DML_2D} which can be computed from the transcendental equations
\begin{align}
\label{eq:fp_2d_1}
    x^2(1-x) - y + I &= 0, \\
\label{eq:fp_2d_2}
    Ae^{\alpha x} - \gamma y = 0,
\end{align}
by solving for $x$. We can write $I$ as a function of $x, y$ by shuffling~\eqref{eq:fp_2d_1} and writing as $I = K(x, y)$, where 
$$
K(x, y) = y - x^2(1-x).
$$
From~\eqref{eq:fp_2d_2} we have
\begin{align}
    y = y_\infty(x) = \frac{A e^{\alpha x}}{\gamma}. \nonumber
\end{align}
Using the above two equations we can write $I_\infty = K(x, y_\infty(x))$, leading us
\begin{align}
    I_\infty(x) = \frac{A}{\gamma} e^{\alpha x} - x^2(1 - x),
\end{align}
with
\begin{align}
\label{eq:dinftydx}
    \frac{dI_\infty(x)}{dx} &= \frac{\alpha A}{\gamma}e^{\alpha x} - x(2 - 3x), \\
    \frac{d^2I_\infty(x)}{dx^2} &= \frac{\alpha^2 A}{\gamma}e^{\alpha x} -2(1-3x), \\
    \frac{d^3I_\infty(x)}{dx^3} &= \frac{\alpha^3 A}{\gamma}e^{\alpha x} +6, \\
    &\vdots \\
    \frac{d^mI_\infty(x)}{dx^m} &= \frac{\alpha^m A}{\gamma}e^{\alpha x},\qquad m>3.
\end{align}
Throughout the paper, for numerics' purpose, we set 
\begin{align}
\label{eq:param}
    A = 0.0041, \qquad \alpha = 5.276, \qquad \rm{and} \qquad \gamma = 0.3.
\end{align}
Considering~\eqref{eq:param} we plot $I_\infty(x)$ as a function of $x$ in Fig.~\ref{fig:Iinfty}.

\begin{figure}[h]
    \centering
    \includegraphics[width=0.5\linewidth]{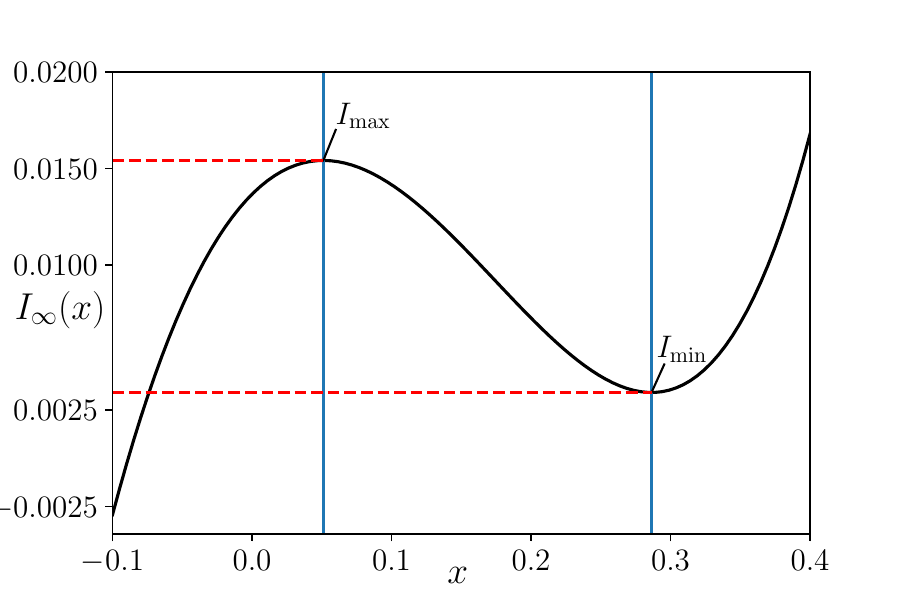}
    \caption{$I_\infty(x)$ as a function of $x$ with parameters~\eqref{eq:param}. The two extrema are denoted by $I_{\rm max}$ and $I_{\rm min}$.}
    \label{fig:Iinfty}
\end{figure}

Note that 
\begin{itemize}
    \item[i)] $I_\infty(x)$ is $C^k$ smooth,
    \item[ii)] $\lim_{x \to -\infty} I_\infty(x) = -\infty$, $\lim_{x \to \infty} I_\infty(x) = \infty$, and
    \item[iii)] $I_\infty (x)$ has two extrema, one maximum at $x_{\rm max}$ and one minimum at $x_{\rm min}$.
\end{itemize}
Let us consider $I_{\rm max} = I_{\infty}(x_{\rm max})$ and $I_{\rm min} = I_{\infty}(x_{\rm min})$. It is observed that $I_{\infty}$ is monotonically increasing in the intervals $(-\infty, x_{\rm max}]$ and $[x_{\rm min}, \infty)$, whereas is monotonically decreasing within the interval $(x_{\rm max}, x_{\rm min})$. For the parameters~\eqref{eq:param}, we report a table of $x$, $I_\infty(x)$ and $\frac{dI_\infty(x)}{dx}$ values in Table~\ref{tab:ITable}. Numerics agree with the analytical findings that $\frac{dI_\infty}{dx} <0$ for $I_\infty \in (I_{\rm max}, I_{\rm min})$, pointing out that we are on the right path. We have $I_\infty(x)$ monotonically decreasing in the interval $(x_{\rm max}, x_{\rm \min})$. Following~\eqref{eq:param}, we have $I_{\rm max} \approx 0.01542$ and $I_{\rm min} \approx 0.0034$. Values of $x_{\max} \approx 0.05114$ and $x_{\min} \approx 0.28639$ were numerically computed by solving $\frac{dI_\infty}{dx} = 0$ from~\eqref{eq:dinftydx}. This is done by utilizing \texttt{Python}'s \texttt{fsolve()} function from the \texttt{scipy.optimize} suite, which is an optimization and root-finding package provided by \texttt{SciPy}.  
\begin{table}[h]
        \centering
        \begin{tabular}{|c|c|c|}
        \hline
             $x$ & $I_{\infty}(x)$ & $\frac{dI_{\infty}(x)}{dx}$ \\ \hline \hline
             $x_{\rm max} = 0.051143193209885154$& $I_{\rm max} = 0.015417976156715866$  & 0 \\
             $0.05351939825528394$ & $0.015414622120595075$ & $-0.0028148833020992196$ \\
             $0.05589560330068273$ & $0.015404637416021267$ & $-0.005580857722584676$ \\
             $\vdots$ & $\vdots$ & $\vdots$ \\
             $0.2151013413424015$ & $0.006197891516995346$ & $-0.06709282841464406$ \\
             $0.21747754638780026$ & $0.00603983091028578$ & $-0.06593178042834522$ \\
             $\vdots$ & $\vdots$ & $\vdots$\\
             $0.2816350826135675$ & $0.0034130953188769644$ & $-0.006683882435937927$\\
             $0.2840112876589663$ & $0.003401116673691529$ & $-0.0033842365907062744$ \\
             $x_{\rm min} = 0.2863874927043651$& $I_{\rm min} = 0.003397079040195275$ & $0$ \\
             \hline
        \end{tabular}
        \caption{We have $\frac{dI_\infty}{dx}<0$ for $I_\infty \in (I_{\max}, I_{\min})$. Parameters are set according to~\eqref{eq:param}.}
        \label{tab:ITable}
    \end{table}
    
Thus, there exist three branches of the equilibrium points and 
\begin{itemize}
    \item[i)] if $I < I_{\rm min}$ or $I > I_{\rm max}$, \eqref{eq:DML_2D} will have a unique equilibrium point (See Fig.~\ref{fig:eqPoints}-(a), (b)),
    \item[ii)] if $I = I_{\rm min}$ or $I = I_{\rm max}$, \eqref{eq:DML_2D} will have two equilibrium points (See Fig.~\ref{fig:eqPoints}-(c), (d)),
    \item[iii)] if $I \in (I_{\rm min}, I_{\rm max})$, it has three equilibrium points (See Fig.~\ref{fig:eqPoints}-(e)). 
\end{itemize}
We now report some examples based on the three branches of the equilibrium points, in Fig.~\ref{fig:eqPoints}. An intersection of the nullclines represents an equilibrium point of~\eqref{eq:DML_2D}. The $x$-nullcline is colored in blue and the $y$-nullcline is colored in red. Again we use \texttt{Python}'s \texttt{fsolve()} to numerically compute the equilibrium points for the parameter set~\eqref{eq:param} and a fixed $I$ in each of the five panels (a)--(e) to first solve for $x^*$ from
\begin{align}
\label{eq:eqPointSoln}
{x^*}^2(1-x^*) - \frac{Ae^{\alpha x^*}}{\gamma} + I = 0
\end{align}
and then substituting $x^*$ in $y^* = y_\infty(x^*)= \frac{Ae^{\alpha x^*}}{\gamma}$. In panel (a), we consider $I = 0.0001<I_{\min}$. It has a unique equilibrium point at $(x^*, y^*) \approx (-0.08827, 0.00858)$. In panel (b), we consider $I = 0.019> I_{\max}$. It also has a unique equilibrium point at $(x^*, y^*) \approx (0.40772, 0.11746)$. In panel (c), we have $I = I_{\min}$. In this case, we will have two equilibrium points at $(x^*, y^*) \approx (-0.07386, 0.00926)$ and $(x^*, y^*) \approx (0.28639, 0.06193)$. In panel (d) we consider the case $I = I_{\max}$, again in which case we have two equilibrium points at $(x^*, y^*) \approx (0.05114, 0.0179)$ and at $(x^*, y^*) \approx (0.39491, 0.109785)$. Finally, in panel (e) we have $I = 0.011 \in (I_{\min}, I_{\max})$, in which case there exist three equilibrium points at $(x^*, y^*) \approx (-0.027865, 0.0118)$, at $(x^*, y^*) \approx (0.15041, 0.03022)$, and at $(x^*, y^*) \approx (0.37528, 0.09898)$.

\begin{figure}[h]
\begin{center}
\begin{tabular}{ccc}
  \includegraphics[scale=0.25]{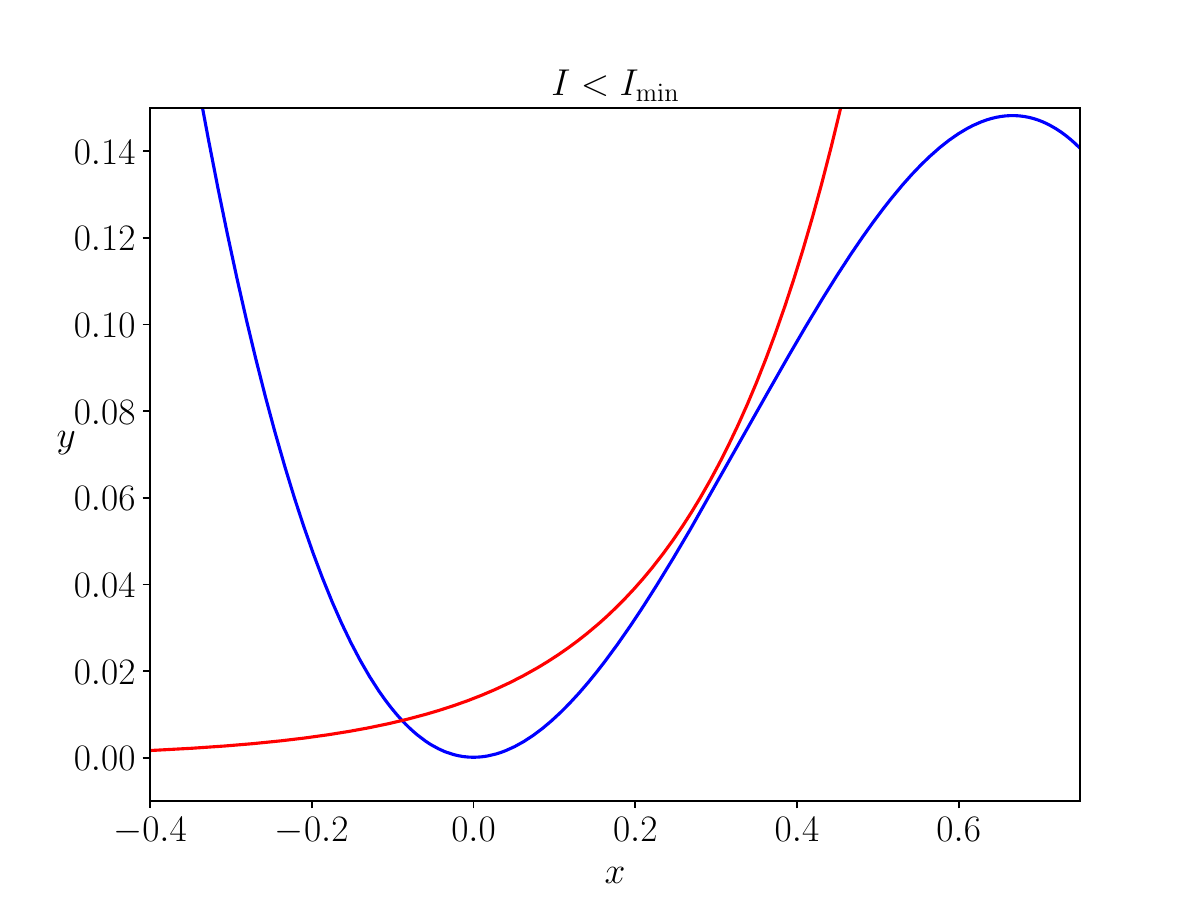} &  \includegraphics[scale=0.25]{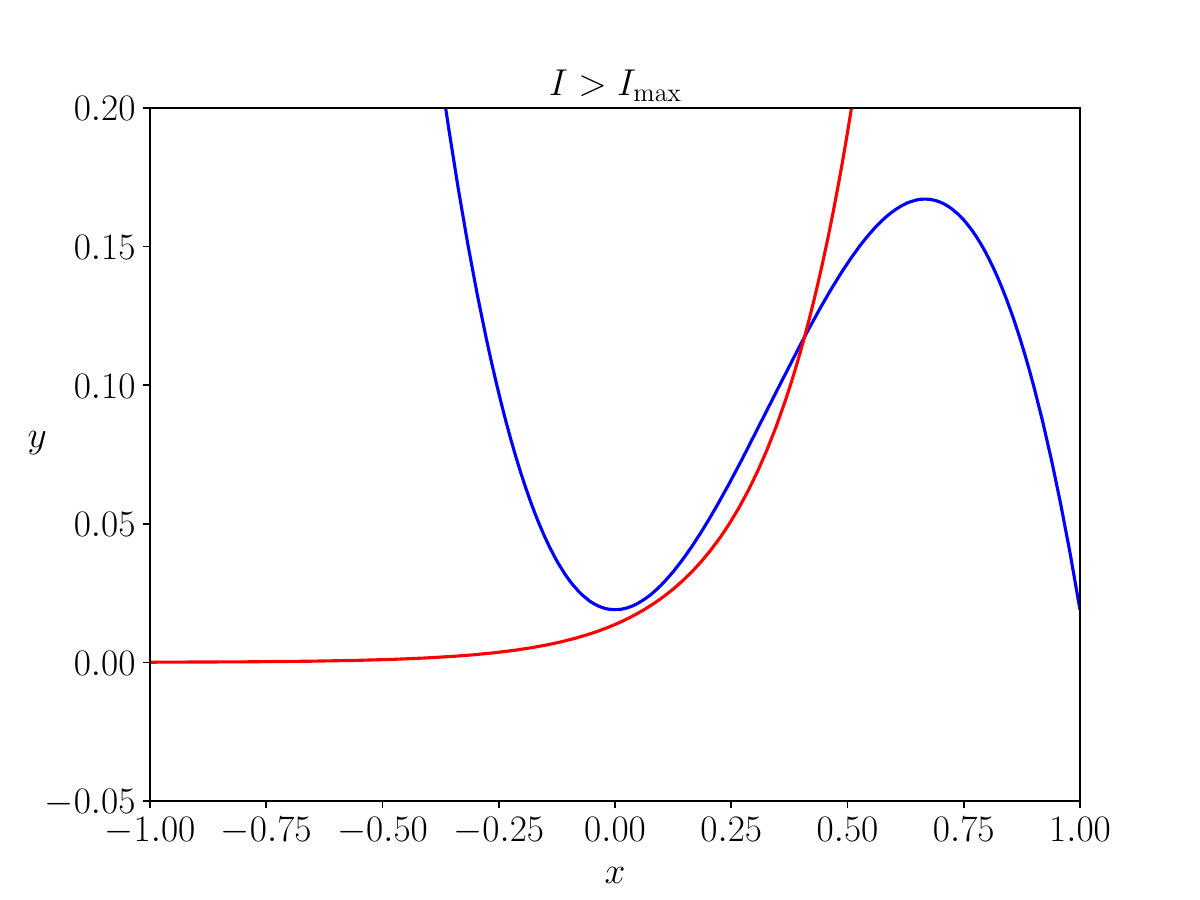} & \includegraphics[scale=0.25]{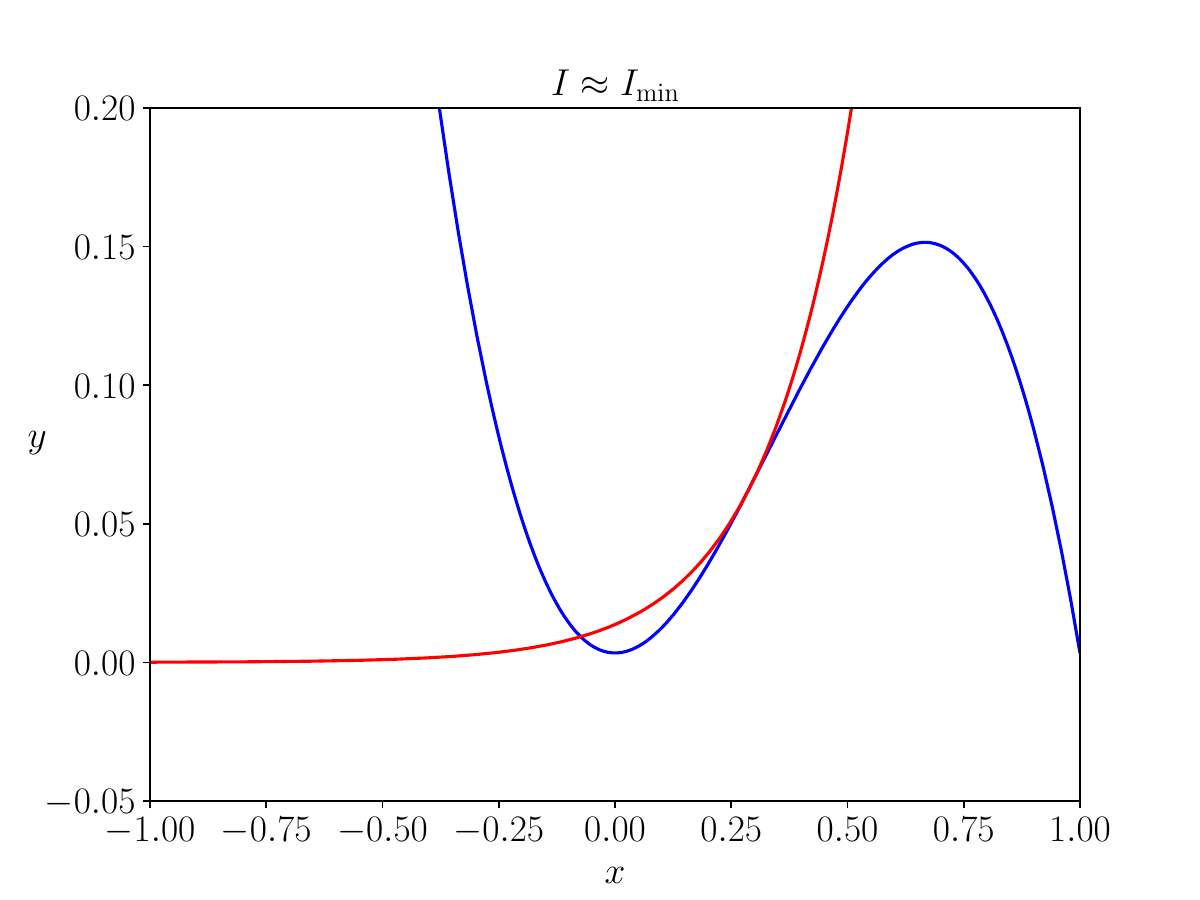} \\
(a)  & (b) &(c) \\[3pt]
\includegraphics[scale=0.25]{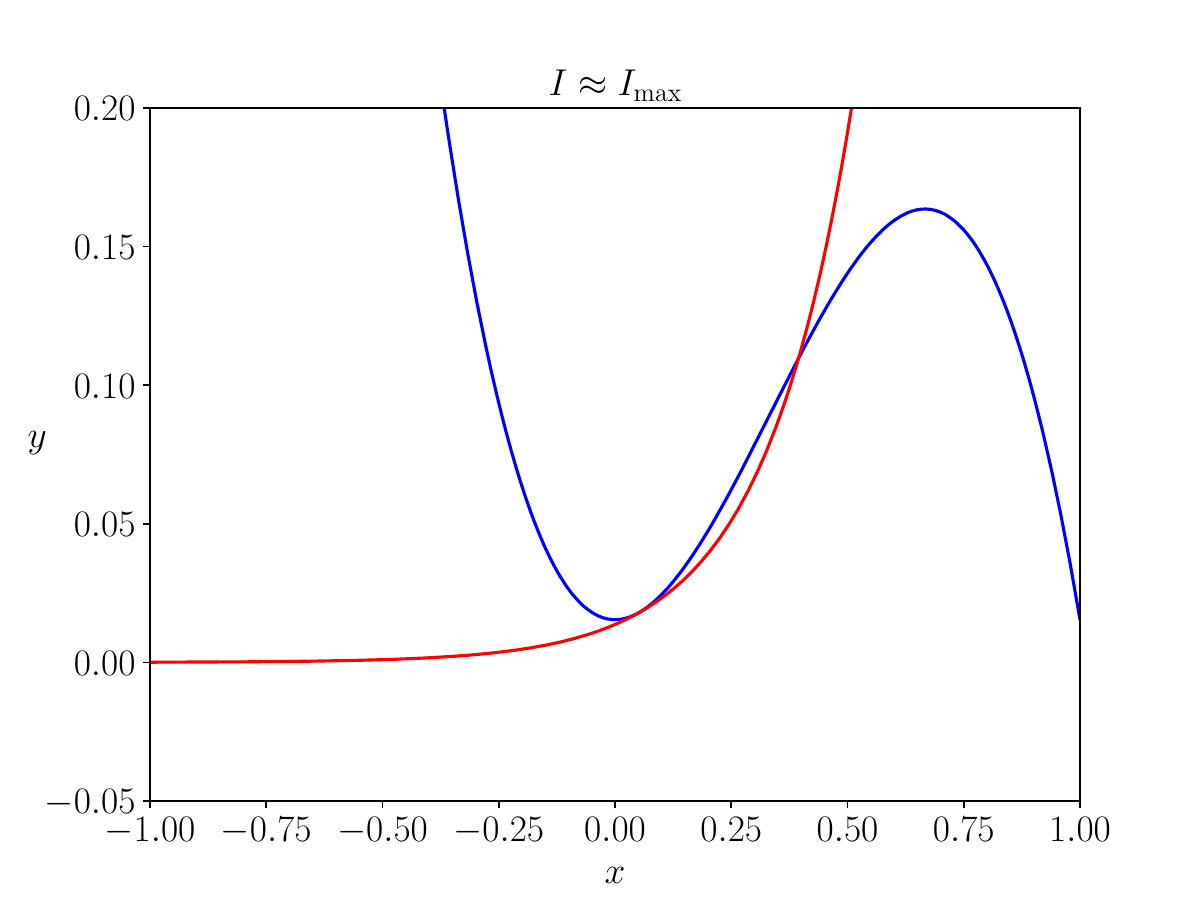} &  \includegraphics[scale=0.25]{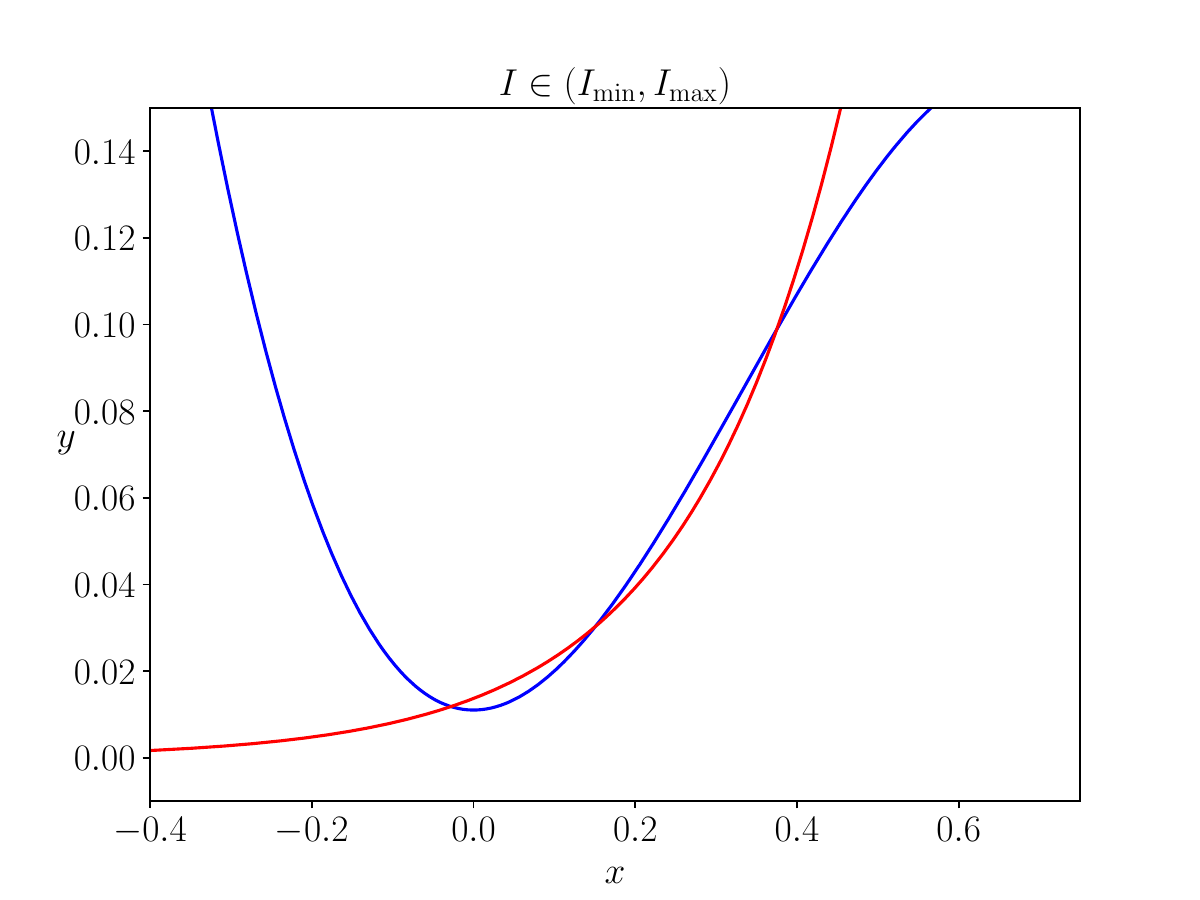} \\
(d) &  (e) \\[3pt]
\end{tabular}
\end{center}
\caption{Three different branches of the equilibrium points with $I$ set as (a) $0.0001< I_{\min}$, (b) $0.019 > I_{\max}$, (c) $I_{\min}$, (d) $I_{\max}$, (e) $0.11 \in (I_{\min}, I_{\max})$. Other parameters are set as~\eqref{eq:param}. Panels (a) and (b) have a unique equilibrium, panels (c) and (d) have two equilibriums, and panel (e) has three equilibriums.}
\label{fig:eqPoints}
\end{figure}

The Jacobian matrix for~\eqref{eq:DML_2D} at an equilibrium point $(x^*, y^*)$ is 
\begin{align}
    J = \begin{bmatrix}
        x^*(2-3x^*) & -1 \\ \alpha A e^{\alpha x^*} & -\gamma
    \end{bmatrix},
\end{align}
with the trace and determinant given by
\begin{align}
\label{eq:trace2D}
    \tau(x^*) &= x^*(2-3x^*)-\gamma , \\
\label{eq:deter2D}
    \delta(x^*) &= -\gamma x^*(2-3x^*) +\alpha A e^{\alpha x^*},
\end{align}
respectively. Next, we will perform a qualitative analysis of the equilibrium point to establish the necessary and sufficient conditions on $x^*$ that determine its stability.

\begin{theorem}
\label{thm:EqStab2D}
    Suppose 
    \begin{enumerate}
        \item[i)] $x^*(2-3x^*)-\gamma>0 $, and
        \item[ii)] $-\gamma x^*(2-3x^*) +\alpha A e^{\alpha x^*} < 2\sqrt{-\gamma - x^*(2-3x^*)} \cos(\frac{\beta \pi}{2})$.
    \end{enumerate}
    Then an equilibrium point $(x^*, y^*)$ of~\eqref{eq:DML_2D} is asymptotically stable.
\end{theorem}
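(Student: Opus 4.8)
The plan is to obtain Theorem~\ref{thm:EqStab2D} as a direct corollary of the fractional linearisation theorem together with the two-dimensional algebraic stability test~\eqref{eq:StabCond}, so that essentially no new analytic work is needed beyond substituting the trace and determinant already recorded in~\eqref{eq:trace2D}--\eqref{eq:deter2D}. First I would invoke the Li--Ma linearisation theorem (the fractional analogue of Hartman--Grobman) together with Matignon's criterion: these reduce asymptotic stability of the equilibrium $(x^*,y^*)$ of~\eqref{eq:DML_2D} to the requirement $|\arg\lambda|>\beta\pi/2$ for both eigenvalues $\lambda$ of the Jacobian $J(x^*)$, and I would observe that the stated hypotheses leave no eigenvalue on the boundary ray $|\arg\lambda|=\beta\pi/2$ (for $\beta\in(0,1]$), so the equilibrium is hyperbolic in the fractional sense and the linearisation~\eqref{eq:linearization} faithfully governs the local dynamics.

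Second, since the system is planar I would not locate the two eigenvalues individually but instead apply Kaslik's reformulation~\eqref{eq:StabCond}: the sector condition on both eigenvalues is equivalent to $\delta(x^*)=\det J(x^*)>0$ and $\tau(x^*)={\rm trace}\,J(x^*)<2\sqrt{\delta(x^*)}\cos(\beta\pi/2)$. Inserting the explicit expressions~\eqref{eq:trace2D}--\eqref{eq:deter2D}, namely
\[
\tau(x^*)=x^*(2-3x^*)-\gamma,\qquad \delta(x^*)=-\gamma\,x^*(2-3x^*)+\alpha A e^{\alpha x^*},
\]
I would identify the determinant-sign requirement of~\eqref{eq:StabCond} with hypothesis~(i) of the theorem and the accompanying trace bound with hypothesis~(ii); the positivity of the determinant is, in particular, exactly what guarantees that the radical in~(ii) is real-valued, so the two hypotheses are mutually consistent. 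The conclusion then follows verbatim, and as a sanity check the limit $\beta\to1$ collapses~(i)--(ii) to the classical Routh--Hurwitz pair $\delta(x^*)>0$, $\tau(x^*)<0$.

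If instead a self-contained argument is wanted, I would work directly with the characteristic polynomial $p(\lambda)=\lambda^2-\tau(x^*)\lambda+\delta(x^*)$ and split on the sign of the discriminant $\Delta=\tau(x^*)^2-4\delta(x^*)$. When $\Delta<0$ the eigenvalues form a conjugate pair $\tfrac{\tau}{2}\pm\tfrac{\ri}{2}\sqrt{-\Delta}$: if $\tau\le0$ then $|\arg\lambda|\ge\pi/2\ge\beta\pi/2$ automatically, while if $\tau>0$ the requirement $|\arg\lambda|=\arctan(\sqrt{-\Delta}/\tau)>\beta\pi/2$ rearranges, using $\delta(x^*)>0$ and $\cos(\beta\pi/2)>0$ for $\beta\in(0,1]$, to precisely $\tau<2\sqrt{\delta(x^*)}\cos(\beta\pi/2)$. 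When $\Delta\ge0$ the eigenvalues are real with product $\delta(x^*)>0$, hence of equal sign, and the trace bound rules out $\tau>0$ (which together with $\Delta\ge0$ would force $\tau\ge2\sqrt{\delta(x^*)}\ge2\sqrt{\delta(x^*)}\cos(\beta\pi/2)$), so both eigenvalues are negative and $(x^*,y^*)$ is stable. I do not anticipate a real obstacle here: all the analytic content is already packaged in~\eqref{eq:StabCond} and the linearisation theorem, and the only steps demanding genuine care are the discriminant case split and consistently tracking the sign of $\cos(\beta\pi/2)$ alongside the reality of the radical in~(ii).
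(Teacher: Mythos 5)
Your proposal follows exactly the paper's route: the paper's proof is precisely the one-line substitution of $\tau(x^*)$ and $\delta(x^*)$ from~\eqref{eq:trace2D}--\eqref{eq:deter2D} into Kaslik's planar criterion~\eqref{eq:StabCond} (itself resting on the Li--Ma/Matignon sector condition), and your optional discriminant case-split merely re-derives~\eqref{eq:StabCond}, which the paper takes as given. One caveat: your identification of hypothesis (i) with the determinant sign and (ii) with the trace bound does not match the theorem as literally printed --- as stated, (i) reads $\tau(x^*)>0$ and the radicand in (ii) is $-\gamma-x^*(2-3x^*)$, which is negative whenever (i) holds --- so the printed hypotheses are evidently typos for $\delta(x^*)=-\gamma x^*(2-3x^*)+\alpha Ae^{\alpha x^*}>0$ and $\tau(x^*)=x^*(2-3x^*)-\gamma<2\sqrt{\delta(x^*)}\cos\left(\frac{\beta\pi}{2}\right)$, which is what you (and the paper's own proof) actually verify.
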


\begin{proof}
    Substituting the values of the trace~\eqref{eq:trace2D} and the determinant~\eqref{eq:deter2D} in ~\eqref{eq:StabCond}, the necessary and sufficient conditions for $(x^*, y^*)$ to be asymptotically stable are easily established. The proof is a straightforward algebra problem.
\end{proof}

\begin{theorem}
\label{thm:EqUnstab2D}
    Suppose $I \in (I_{\rm min}, I_{\rm max})$. Then this branch of equilibrium points is completely unstable.
\end{theorem}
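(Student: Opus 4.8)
The plan is to exploit the direct link between the determinant of the Jacobian at an equilibrium and the slope of the stationary current curve $I_\infty$. First I would observe that, comparing \eqref{eq:deter2D} with \eqref{eq:dinftydx},
\[
\delta(x^*) = -\gamma\, x^*(2-3x^*) + \alpha A e^{\alpha x^*} = \gamma\left(\frac{\alpha A}{\gamma}e^{\alpha x^*} - x^*(2-3x^*)\right) = \gamma\,\frac{dI_\infty}{dx}(x^*),
\]
so that, since $\gamma>0$, the sign of $\delta(x^*)$ coincides with the sign of $I_\infty'(x^*)$.

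Next I would pin down precisely which equilibria form ``this branch.'' When $I\in(I_{\rm min},I_{\rm max})$ the equation $I_\infty(x^*)=I$ has, in addition to the two outer roots, a unique root $x^*$ in the open interval $(x_{\rm max},x_{\rm min})$, on which $I_\infty$ is strictly monotonically decreasing; this middle root is the branch referred to in the statement. Consequently $I_\infty'(x^*)<0$ along this branch, and therefore $\delta(x^*)<0$.

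Then I would read off instability from the spectrum of $J$. The characteristic polynomial at $(x^*,y^*)$ is $\lambda^2-\tau(x^*)\lambda+\delta(x^*)$; because $\delta(x^*)<0$ the discriminant $\tau(x^*)^2-4\delta(x^*)$ is strictly positive, so both eigenvalues are real, and since their product equals $\delta(x^*)<0$ exactly one of them, say $\lambda_+$, is strictly positive. Hence $\arg(\lambda_+)=0<\beta\pi/2$ for every admissible $\beta\in(0,1]$, which by Matignon's criterion (equivalently, the $N=2$ criterion \eqref{eq:StabCond}, whose first requirement $\delta(x^*)>0$ is violated, together with the saddle condition $|\arg(\lambda)|<\beta\pi/2$ recalled after \eqref{eq:linearization}) makes the equilibrium a saddle, hence unstable. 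Since the argument is uniform over all $I\in(I_{\rm min},I_{\rm max})$ and all corresponding $x^*$ in $(x_{\rm max},x_{\rm min})$, the entire branch is unstable.

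As for difficulty: there is essentially no analytic obstacle here. The crux is simply noticing the identity $\delta(x^*)=\gamma\, I_\infty'(x^*)$ and then being careful to upgrade ``not asymptotically stable'' to genuine instability by exhibiting the positive real eigenvalue. The only mild subtleties are making the identification of ``this branch'' explicit (the middle, negatively sloped portion of $I_\infty$) and observing that the memory index $\beta$ is irrelevant to the conclusion, since a positive real eigenvalue destabilises the equilibrium for every $\beta\in(0,1]$.
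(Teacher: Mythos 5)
Your proposal is correct and follows essentially the same route as the paper: identify the middle branch where $I_\infty$ is decreasing, use the identity $\delta(x^*)=\gamma\,I_\infty'(x^*)$ to get $\delta(x^*)<0$, and conclude instability. You merely spell out the final step (positive real eigenvalue, hence a saddle for every $\beta\in(0,1]$ by Matignon's criterion) that the paper states as a remark immediately after its proof, and you use the correct first-derivative expression where the paper's proof contains a typographical slip.
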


\begin{proof}
We have seen that $I_{\infty}(x)$ is monotonically decreasing in the range $(I_{\rm min}, I_{\rm max})$. This means $\frac{dI_{\rm \infty}(x^*)}{dx}<0$, i.e, 
    $$
    \frac{\alpha^2 A}{\gamma}e^{\alpha x^*} -2(1-3x^*) <0.
    $$
    This can be easily rearranged to realise that $\delta (x^*)<0$, meaning an equilibrium point $(x^*, y^*)$ of~\eqref{eq:DML_2D} is unstable in the branch $(I_{\rm min}, I_{\rm max})$.
\end{proof}

From Theorem~\ref{thm:EqUnstab2D} we can directly see that $\delta(x^*)<0$ implies one of the two eigenvalues is positive and the other negative, meaning the equilibrium point on this branch is a saddle, irrespective of the fractional order $\beta \in (0, 1]$.

\begin{theorem}
\label{thm:saddleNode}
 Suppose $I = I_{\rm min}$ or $I = I_{\rm max}$. Then~\eqref{eq:DML_2D} has a {\em saddle-node} bifurcation.    
\end{theorem}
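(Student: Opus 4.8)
The plan is to treat $I$ as the bifurcation parameter and apply the standard (Sotomayor) saddle-node bifurcation theorem, after first observing that the equilibrium set of~\eqref{eq:DML_2D} does not depend on the memory index $\beta$. Since the Caputo operator ${}^{C}\mathcal{D}_{0}^{\beta}$ annihilates constants, the equilibria of~\eqref{eq:DML_2D} coincide with those of~\eqref{eq:dML} and lie exactly on the curve $\{(x^*,y_\infty(x^*)) : I = I_\infty(x^*)\}$. Hence the collision and annihilation of equilibria as $I$ crosses $I_{\rm min}$ or $I_{\rm max}$ --- the geometric content of a fold, already recorded in the equilibrium count (one equilibrium for $I \notin [I_{\rm min},I_{\rm max}]$, two at the endpoints, three inside) --- is a $\beta$-independent phenomenon. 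The order $\beta$ enters only through Matignon's criterion~\eqref{eq:StabCond} and therefore only fixes the stability type of the non-degenerate branch, which is not part of the definition of the saddle-node.

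First I would pin down the zero eigenvalue. At $I = I_{\rm max}$ (resp. $I_{\rm min}$) the relevant equilibrium has $x^* = x_{\rm max}$ (resp. $x_{\rm min}$), where $\frac{dI_\infty}{dx}(x^*)=0$; by~\eqref{eq:dinftydx} this is exactly $\alpha A e^{\alpha x^*} = \gamma\, x^*(2-3x^*)$. Substituting into the determinant~\eqref{eq:deter2D} gives $\delta(x^*) = -\gamma\,x^*(2-3x^*) + \alpha A e^{\alpha x^*} = 0$, so $J(x^*)$ has a simple zero eigenvalue and the remaining eigenvalue equals $\tau(x^*) = x^*(2-3x^*)-\gamma$ from~\eqref{eq:trace2D}; for the parameters~\eqref{eq:param} one checks $\tau(x_{\rm max})\approx -0.206 \ne 0$ and $\tau(x_{\rm min})\approx 0.027 \ne 0$, so the zero eigenvalue is algebraically simple and the rest of the spectrum stays off the imaginary axis.

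Next I would verify the two non-degeneracy conditions for $F(x,y;I)=(x^2(1-x)-y+I,\ Ae^{\alpha x}-\gamma y)$. With $a = x^*(2-3x^*)$, a right null vector of $J(x^*)$ is $v = (1,a)^\top$ and a left null vector is $w = (-\gamma,1)$. Transversality in the parameter: $w\cdot\partial_I F = w\cdot(1,0)^\top = -\gamma \ne 0$. Quadratic non-degeneracy: using $\partial_{xx}F_1 = 2-6x$, $\partial_{xx}F_2 = \alpha^2 A e^{\alpha x}$ and all other second partials zero, $w\cdot D^2F(x^*,y^*)(v,v) = \alpha^2 A e^{\alpha x^*} - 2\gamma(1-3x^*) = \gamma\,\frac{d^2I_\infty}{dx^2}(x^*)$, which is nonzero because $x_{\rm max}$ and $x_{\rm min}$ are non-degenerate extrema of $I_\infty$ ($\frac{d^2I_\infty}{dx^2}<0$ at $x_{\rm max}$ and $>0$ at $x_{\rm min}$ for~\eqref{eq:param}). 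Sotomayor's theorem then yields a saddle-node bifurcation of the integer-order vector field at $I=I_{\rm max}$ and at $I=I_{\rm min}$, and by the $\beta$-independence of the equilibrium manifold the same fold persists for~\eqref{eq:DML_2D}.

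\textbf{Main obstacle.} The difficulty is conceptual rather than computational: one must justify that ``saddle-node bifurcation'' is a well-posed, $\beta$-robust notion in the fractional setting. I would handle this by reducing the claim to the $\beta$-free facts --- the equilibrium count and the non-degeneracy of $I_\infty$ at its extrema --- and using the Jacobian computation above only to certify that the collision is the generic quadratic fold rather than a higher-codimension degeneracy. A secondary minor point is the genericity claim $\tau(x^*)\ne 0$, which I would either verify numerically for the working parameter set~\eqref{eq:param} as above, or promote to a standing nondegeneracy hypothesis.
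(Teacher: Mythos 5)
Your proposal is correct, and its core pivot is the same as the paper's: at $I=I_{\rm min}$ or $I_{\rm max}$ one has $\frac{dI_\infty}{dx}(x^*)=0$, which via \eqref{eq:dinftydx} and \eqref{eq:deter2D} is exactly $\delta(x^*)=\gamma\,\frac{dI_\infty}{dx}(x^*)=0$, i.e.\ a simple zero eigenvalue at the point where two equilibria collide. Where you differ is in how much of the fold you actually certify. The paper stops there: zero eigenvalue plus the collision/annihilation read off from the equilibrium count is taken as the saddle-node, with no nondegeneracy checks and with the $\beta$-independence only remarked after the proof. You instead run the full Sotomayor verification for the integer-order field --- null vectors $v=(1,a)^\top$, $w=(-\gamma,1)$, transversality $w\cdot\partial_I F=-\gamma\neq 0$, and the quadratic coefficient $w\cdot D^2F(v,v)=\gamma\,\frac{d^2I_\infty}{dx^2}(x^*)$ --- and you make the reduction to $\beta$-free facts (Caputo derivative of a constant vanishes, so the equilibrium manifold is $\beta$-independent; $\beta$ only enters through \eqref{eq:StabCond}) explicit rather than implicit. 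What your route buys is a genuinely codimension-one fold rather than a mere zero eigenvalue: it rules out the Bogdanov--Takens-type degeneracy $\tau(x^*)=0$ and the cusp-type degeneracy $\frac{d^2I_\infty}{dx^2}(x^*)=0$, neither of which the paper addresses. Two refinements would let you drop your numerical checks and standing hypotheses: the quadratic nondegeneracy holds for all admissible parameters, not just \eqref{eq:param}, because $\frac{d^3I_\infty}{dx^3}=\frac{\alpha^3A}{\gamma}e^{\alpha x}+6>0$ makes $I_\infty''$ strictly increasing, forcing $I_\infty''(x_{\max})<0<I_\infty''(x_{\min})$ whenever the two extrema exist; and for the static fold (collision and annihilation of equilibria with a simple zero eigenvalue, which is all the paper claims) the condition $\tau(x^*)\neq 0$ is only needed to keep the zero eigenvalue simple, so it is the one genuine extra hypothesis your argument isolates. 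Your honest caveat that a dynamical normal-form theory of saddle-node bifurcations for Caputo systems is not being invoked applies equally to the paper's proof, so it is not a gap relative to the statement as intended.
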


\begin{proof}
    At the branch of the equilibrium point where $I = I_{\rm min}$ or $I = I_{\rm max}$, we have $\frac{dI_\infty(x^*)}{dx} = 0$, indicating that $\delta(x^*) = 0$. This means one of the eigenvalues is zero and two out of the three equilibrium points collide and annihilate into one, giving rise to a saddle-node bifurcation.
\end{proof}

The above theorem holds irrespective of the fractional order $\beta \in (0, 1]$. Saddle-node bifurcation occurs on the transition from the branch containing three equilibrium points to the branch containing two. This can happen in two different directions. For example when the value of $I$ is reduced from being in the range $(I_{\min}, I_{\max})$ to $I_{\min}$ (See Fig.~\ref{fig:eqPoints}-(e) transitioning to Fig.~\ref{fig:eqPoints}-(c)) or the value of $I$ is increased from being in the range $(I_{\min}, I_{\max})$ to $I_{\max}$ (See Fig.~\ref{fig:eqPoints}-(d) transitioning to Fig.~\ref{fig:eqPoints}-(c)). In either of the cases $\frac{dI}{dx}$ becomes $0$ from being negative, giving rise to a saddle-node bifurcation (see the top or the bottom end of Table~\ref{tab:ITable}). A typical codimension-one bifurcation diagram for varying $I$ showing both saddle-node bifurcations and Hopf bifurcations was shown by Fatoyinbo {\em et al.}~\citep{FaMu22} in their work from 2022.

\begin{theorem}
\label{thm:betaStar2D}
    Suppose $I<I_{\rm min}$ or $I > I_{\rm max}$. Then 
    \begin{itemize}
        \item[i)] the stability of an equilibrium point of~\eqref{eq:DML_2D} depends on the sign of $\tau(x^*)$,
        \item[ii)] for $\tau(x^*)\ge 0$ the equilibrium is asymptotically stable if and only if the order $\beta < \beta^* = \frac{2}{\pi}\cos^{-1}\bigg( \min \bigg( 1, \frac{-\gamma + x^*(2-3x^*)}{2\sqrt{\alpha Ae^{\alpha x^*} - \gamma x^* (2-3x^*)}} \bigg) \bigg)$.
    \end{itemize} 
\end{theorem}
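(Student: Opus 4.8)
The plan is to push everything through the two-dimensional Matignon--Kaslik test \eqref{eq:StabCond}, after first noting that on the branches singled out by the hypothesis the determinant is automatically positive. Comparing the determinant \eqref{eq:deter2D} with the first derivative in \eqref{eq:dinftydx} yields the identity $\delta(x^*) = \gamma\,\tfrac{dI_\infty(x^*)}{dx}$. When $I<I_{\rm min}$ the unique equilibrium lies on the increasing branch $(-\infty,x_{\rm max}]$ with $x^*<x_{\rm max}$, and when $I>I_{\rm max}$ it lies on the increasing branch $[x_{\rm min},\infty)$ with $x^*>x_{\rm min}$; in both cases $\tfrac{dI_\infty(x^*)}{dx}>0$, hence $\delta(x^*)>0$. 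Consequently the first clause of \eqref{eq:StabCond} always holds, and asymptotic stability of $(x^*,y^*)$ is equivalent to the single inequality $\tau(x^*) < 2\sqrt{\delta(x^*)}\,\cos(\tfrac{\beta\pi}{2})$.

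For part~i) I would split on the sign of $\tau(x^*)$. If $\tau(x^*)<0$, then for every $\beta\in(0,1]$ we have $\cos(\tfrac{\beta\pi}{2})\ge 0$, so the right-hand side above is $\ge 0>\tau(x^*)$ and the equilibrium is asymptotically stable irrespective of $\beta$. If $\tau(x^*)\ge 0$, the right-hand side may fail to dominate $\tau(x^*)$, so the stability verdict genuinely depends on $\beta$; this is precisely the situation addressed in part~ii). Thus whether the equilibrium is unconditionally stable or only conditionally stable is governed by $\mathrm{sign}\,\tau(x^*)$.

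For part~ii), assume $\tau(x^*)\ge 0$ and divide the stability inequality by $2\sqrt{\delta(x^*)}>0$: asymptotic stability is equivalent to $\cos(\tfrac{\beta\pi}{2}) > \tfrac{\tau(x^*)}{2\sqrt{\delta(x^*)}}$. On $\beta\in(0,1]$ the map $\beta\mapsto\cos(\tfrac{\beta\pi}{2})$ is continuous and strictly decreasing with range $[0,1)$, so: if $\tfrac{\tau(x^*)}{2\sqrt{\delta(x^*)}}\ge 1$ there is no admissible $\beta$ making it hold, while if $\tfrac{\tau(x^*)}{2\sqrt{\delta(x^*)}}<1$ it holds exactly when $\tfrac{\beta\pi}{2}<\arccos\!\big(\tfrac{\tau(x^*)}{2\sqrt{\delta(x^*)}}\big)$, i.e. $\beta<\tfrac{2}{\pi}\arccos\!\big(\tfrac{\tau(x^*)}{2\sqrt{\delta(x^*)}}\big)$. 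Inserting $\min(1,\cdot)$ unifies the two cases, since $\arccos(1)=0$ turns the forbidden case into the empty condition $\beta<0$. Finally I would substitute $\tau(x^*)=-\gamma+x^*(2-3x^*)$ from \eqref{eq:trace2D} and $\delta(x^*)=\alpha Ae^{\alpha x^*}-\gamma x^*(2-3x^*)$ from \eqref{eq:deter2D} to recover the displayed $\beta^*$.

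The argument is essentially bookkeeping, so I do not anticipate a serious obstacle; the only points demanding care are (a) confirming $\delta(x^*)>0$ on the entire branch so that division by $2\sqrt{\delta(x^*)}$ is legitimate, and (b) making the ``if and only if'' genuinely tight at the endpoints --- checking that $\beta=1$ (where $\cos(\tfrac{\beta\pi}{2})=0$) and the limiting ratio $\tfrac{\tau(x^*)}{2\sqrt{\delta(x^*)}}=1$ (where $\arccos$ vanishes) are consistent with the strict inequalities in \eqref{eq:StabCond}.
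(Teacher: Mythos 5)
Your proposal is correct and follows essentially the same route as the paper's proof: establish $\delta(x^*)>0$ from the monotonic increase of $I_\infty$ on the relevant branches (you make the useful identity $\delta(x^*)=\gamma\,\frac{dI_\infty(x^*)}{dx}$ explicit where the paper leaves it implicit), then apply the Kaslik condition~\eqref{eq:StabCond}, split on the sign of $\tau(x^*)$, and handle the two subcases $\tau/(2\sqrt{\delta})\ge 1$ and $<1$ exactly as in the paper, with the $\min(1,\cdot)$ unifying them. Your endpoint bookkeeping (strict inequalities, range of $\cos(\beta\pi/2)$ on $(0,1]$) is if anything slightly more careful than the paper's.
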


\begin{proof}
    We have noticed that $I_{\infty}(x)$ is monotonically increasing for $I<I_{\rm min}$ and $I> I_{\rm max}.$ This means $\frac{dI_{\infty}(x^*)}{dx}>0$, leading to $\delta(x^*) >0$. In that case, the stability of the equilibrium point depends on the sign of $\tau(x^*)$~\citep{MoKa24}.
    \begin{itemize}
        \item[i)] If $\tau(x^*) \le 0$, the equilibrium is asymptotically stable for any $\beta \in (0, 1]$.
        \item[ii)] If however, $\tau(x^*)\ge 0$, the stability of the equilibrium depends on two subcases
        \begin{itemize}
            \item[a)] Suppose $\frac{\tau(x^*)}{2\sqrt{\delta(x^*)}}\ge 1$. From $\tau(x^*) < 2\sqrt{\delta(x^*)}\cos \big(\frac{\beta \pi}{2} \big)$ leads to the inequality $\cos \big(\frac{\beta \pi}{2}\big)>1$, which is nonsense. Thus, the equilibrium is unstable.
            \item[b)] Suppose $\frac{\tau(x^*)}{2\sqrt{\delta(x^*)}} < 1$, then $\tau(x^*) < 2\sqrt{\delta(x^*)}\cos \big(\frac{\beta \pi}{2} \big)$ leads to an inequality involving the critical value of the order $\beta <\frac{2}{\pi} \cos^{-1} \bigg(\frac{\tau(x^*)}{2\sqrt{\delta(x^*)}} \bigg)$. Substituting the values of $\tau(x^*)$ and $\delta(x^*)$ gives us
            \begin{align}
            \label{eq:betaStar}
            \beta^* = \frac{2}{\pi}\cos^{-1}\bigg( \min \bigg( 1, \frac{-\gamma + x^*(2-3x^*)}{2\sqrt{\alpha Ae^{\alpha x^*} - \gamma x^* (2-3x^*)}} \bigg) \bigg).
            \end{align}
        \end{itemize}
    \end{itemize}
\end{proof}
In the above case $\beta^*$ is a critical value at which~\eqref{eq:DML_2D} undergoes a Hopf bifurcation as $\beta$ is increased. The stable equilibrium loses its stability as $\beta$ is increased beyond $\beta^*$ and eventually a stable limit cycle appears. We will now run numerical simulations to validate our theoretical analysis of~\eqref{eq:DML_2D}. Our main tool is the \texttt{FDEsolver()} function from the \texttt{FdeSolver.jl} (v1.0.8) package, see Appendix~\ref{app:Software}. Function \texttt{F} is first set as the right hand side of~\eqref{eq:DML_2D}, time span \texttt{tSpan} as $[0, 6000]$, and the initial conditions as $[0.1, 0.1]$, i.e., $x(0) = 0.1$ and $y(0) = 0.1$. Time step \texttt{h} is fixed to $0.01$. The parameters $A$, $\alpha$, $\gamma$ follow~\eqref{eq:param}. The main bifurcation parameters are $I$ and $\beta$ which we vary according to our research question. All the parameters are collected in the vector \texttt{par}.

Fig.~\ref{fig:pp} is a collection of phase portraits and their corresponding time series plots for the voltage $x(t)$ and the recovery-like variable $y(t)$, for a $I>I_{\max}$. This means there is a unique equilibrium point. Particularly, we have set $I = 0.019$. In each panel, the left hand side shows the phase portrait with the $x$- and $y$-nullclines, with the equilibrium point denoted by their unique intersection. For the selected \texttt{tSpan} and \texttt{h} values, we will have $6 \times 10^5$ data points. We discard the first $10^5$ data points in the phase portrait to remove the transient dynamics. On the right hand side, we plot the time series of both $x(t)$ and $y(t)$ and plot all the data points for the time series. We portray how the dynamics of~\eqref{eq:DML_2D} varies with changing $\beta$ from $0.9$ to $1$, transitioning through $\beta^*$. We have previously computed $x^* \approx 0.40772$ for $I = 0.019$ and the other parameters set as~\eqref{eq:param}. Now substituting these in~\eqref{eq:trace2D} and~\eqref{eq:deter2D} we get $\tau(x^*) \approx 0.01673$ and $\delta(x^*) \approx 0.0909$, meaning $\beta^* \approx 0.98233$ (using~\eqref{eq:betaStar}). Fig.~\ref{fig:pp}-(a) to (e) indicate $\beta <\beta^*$ where we observe that the dynamics always converges to the stable equilibrium. Note that for a higher value of $\beta$ when $\beta <\beta^*$, the time required to converge to the equilibrium increases, i.e, in panel (c) for $\beta = 0.94$ the system converges almost immediately, however for $\beta =0.98$ (approaching $\beta^*)$, the system converges roughly near $t=3500$. At $\beta=\beta^*$ (panel (f)), the equilibrium loses its stability, and both the voltage and recovery-like variables exhibit tonic spiking with small amplitudes. As $\beta$ is further increased (panels (g),(h)), we observe that the equilibrium point becomes unstable and a stable limit cycle appears indicating Hopf bifurcation. We also notice that the dynamical variables exhibit tonic spiking with higher amplitudes as $\beta$ is further increased to $1$ from $\beta^*$. Thus, the numerics satisfy the theory put forward in this paper. A similar behavior is reported in a fractional-order commensurate system of Wilson-Cowan neurons in the work by Mondal {\em et al.}~\citep{MoKa24}.

\begin{figure*}
\begin{center}
\begin{tabular}{cc}
  \includegraphics[scale=0.25]{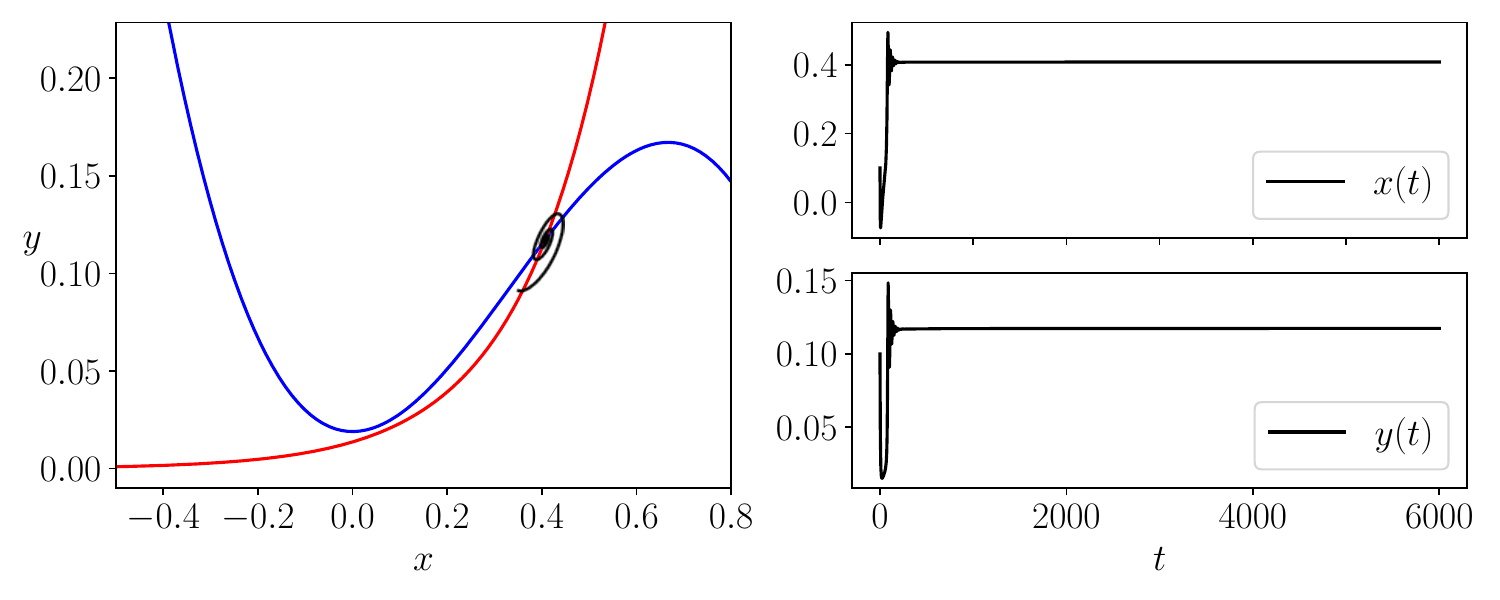} &  \includegraphics[scale=0.25]{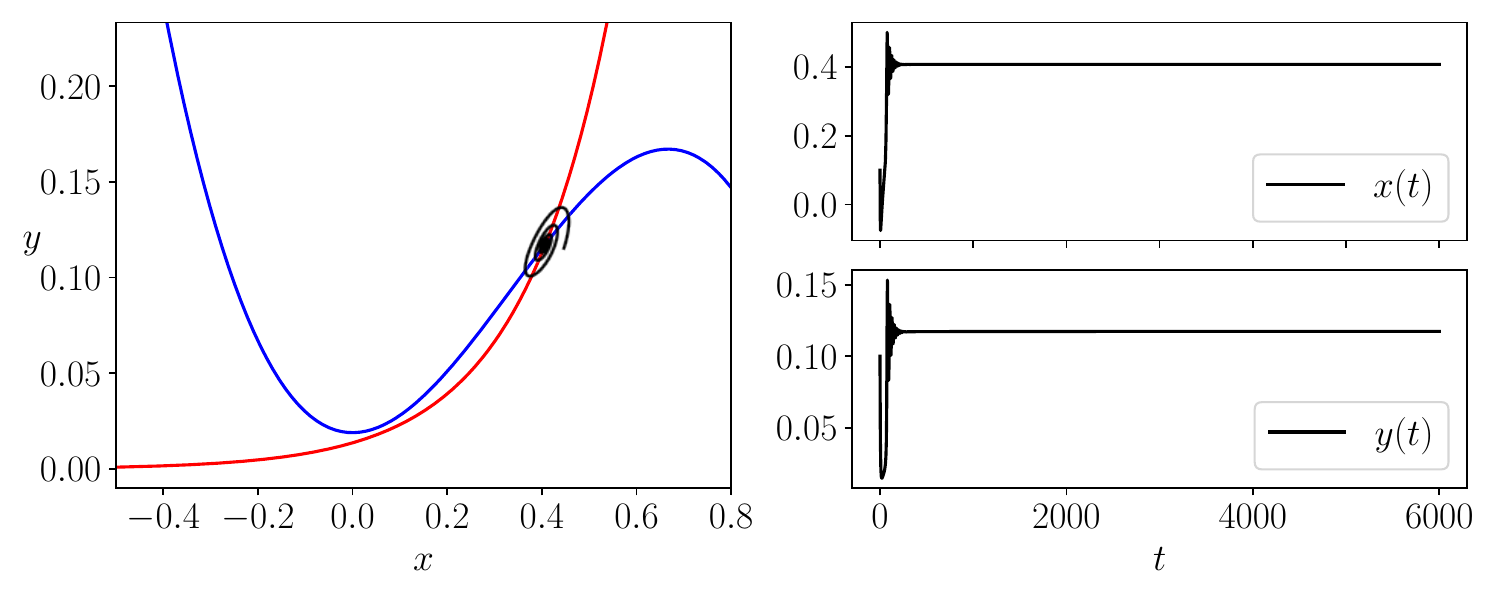} \\
(a) $\beta = 0.9$ & (b) $\beta = 0.92$ \\[3pt]
\includegraphics[scale=0.25]{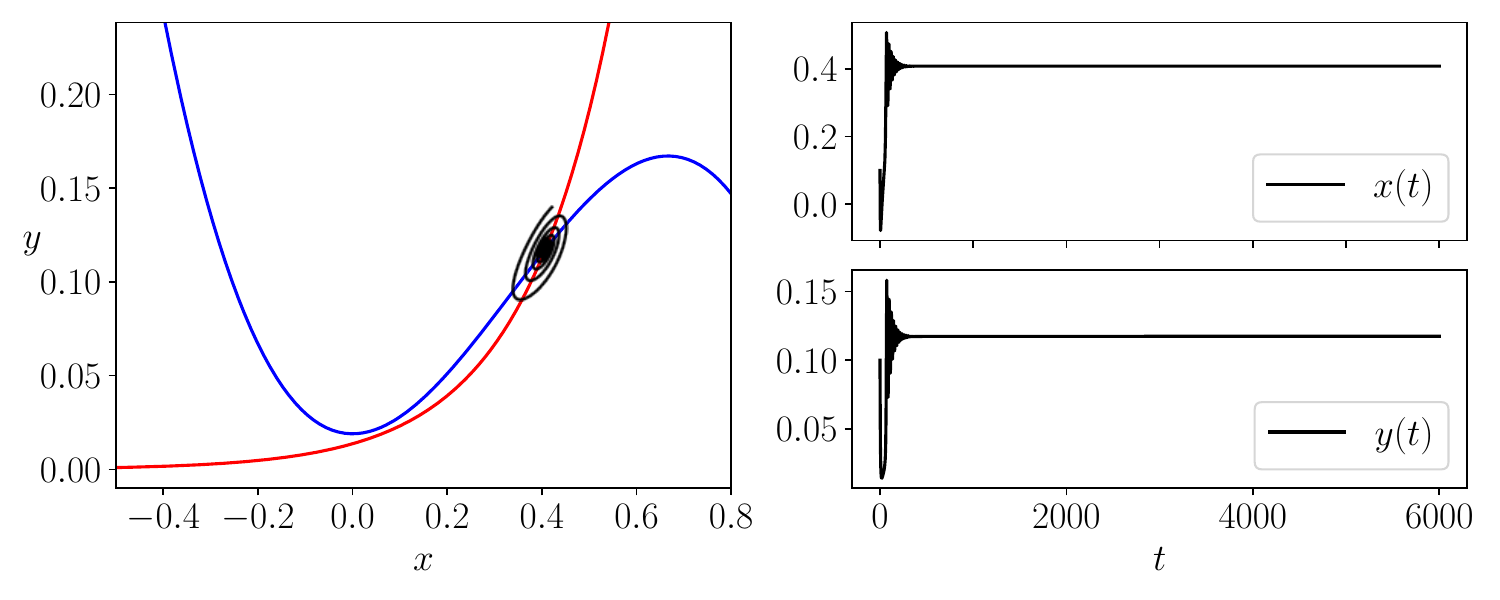} &  \includegraphics[scale=0.25]{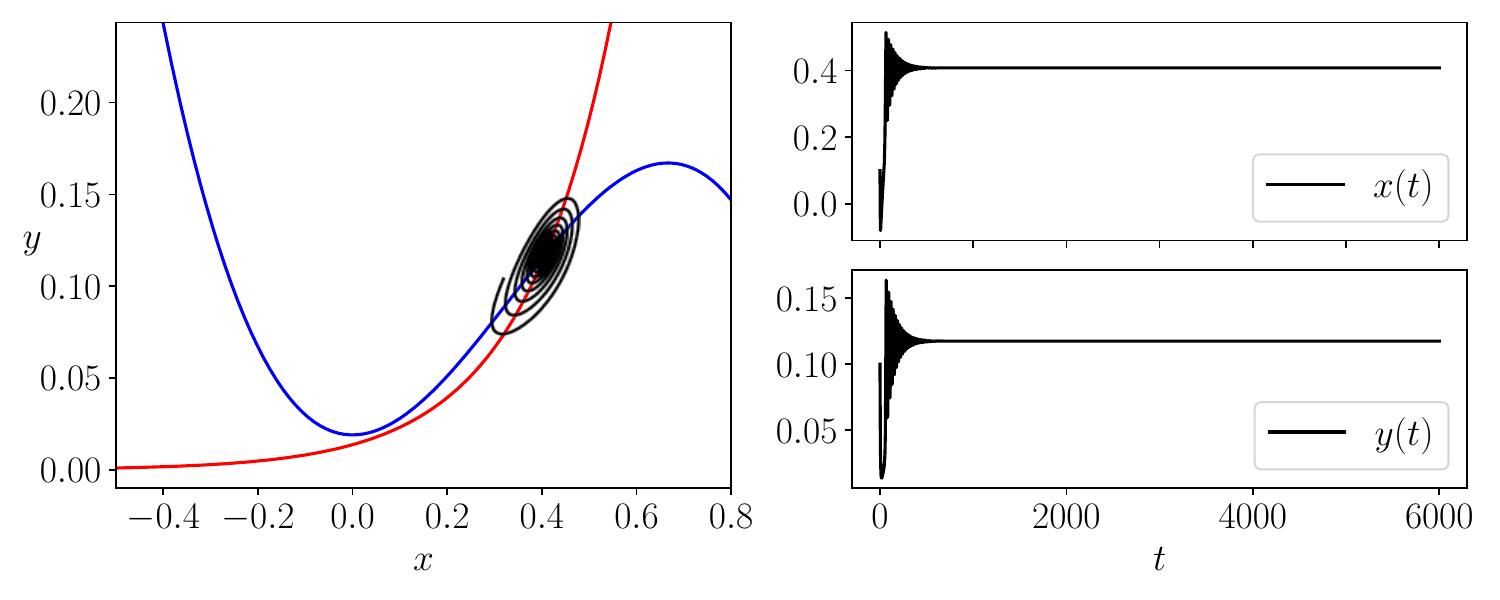} \\
(c) $\beta = 0.94$ &  (d) $\beta = 0.96$\\[3pt]
\includegraphics[scale=0.25]{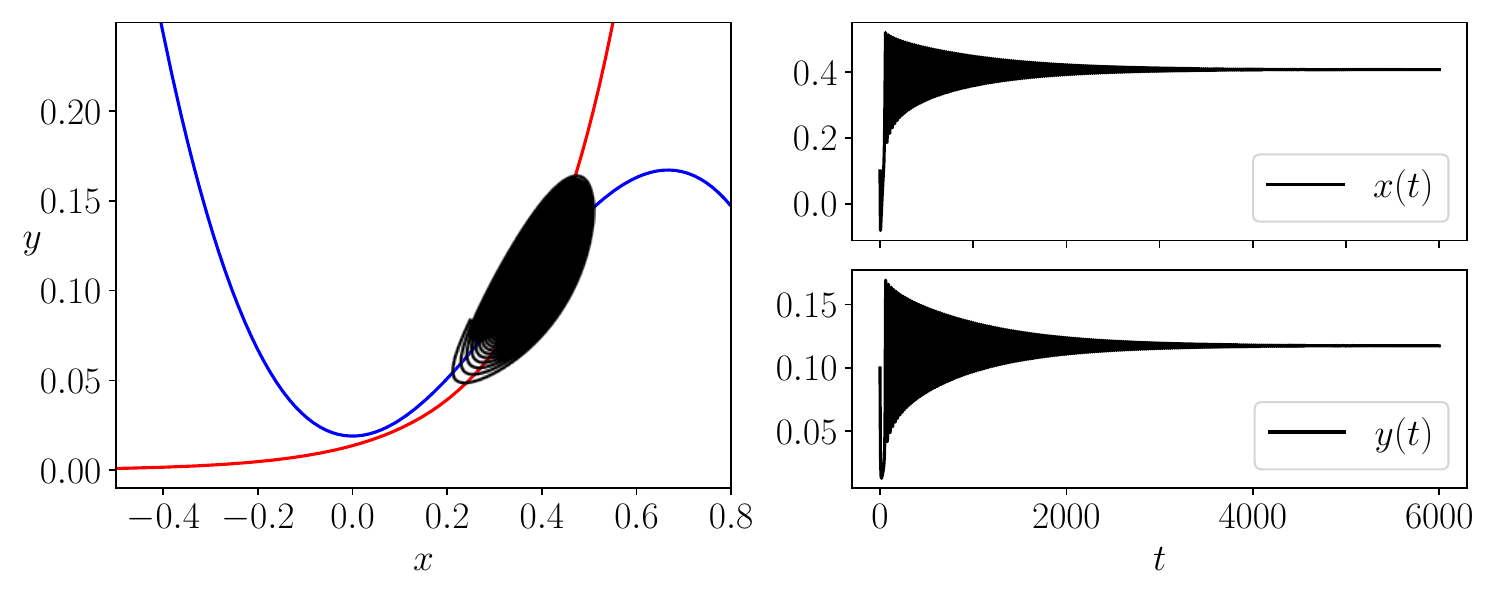} &  \includegraphics[scale=0.25]{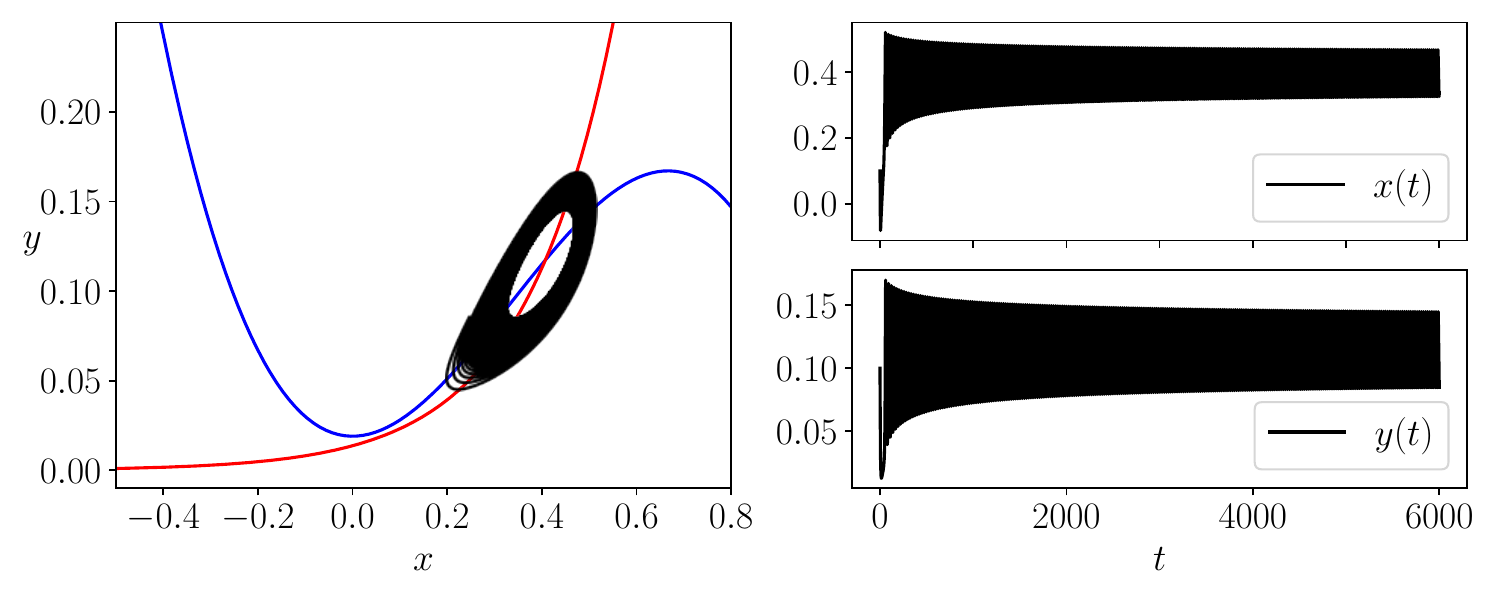} \\
(e) $\beta = 0.98$ & (f) $\beta = \beta^*$ \\[3pt]
\includegraphics[scale=0.25]{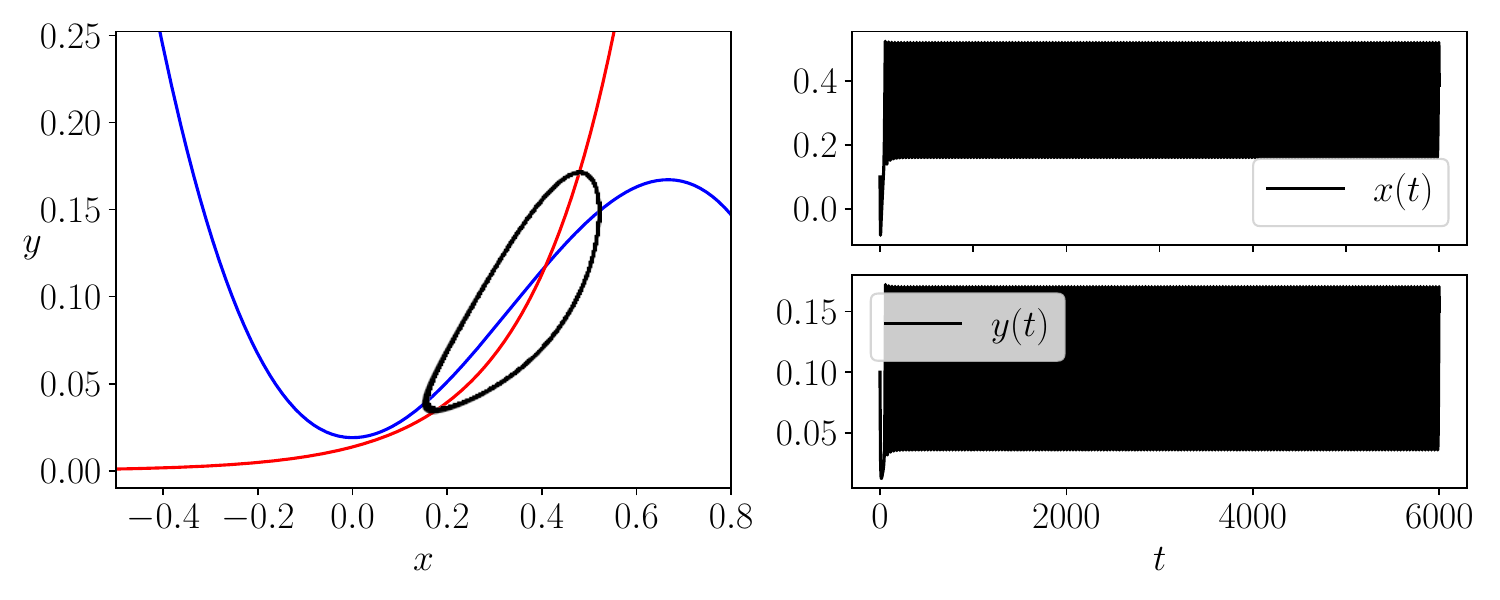} &  \includegraphics[scale=0.25]{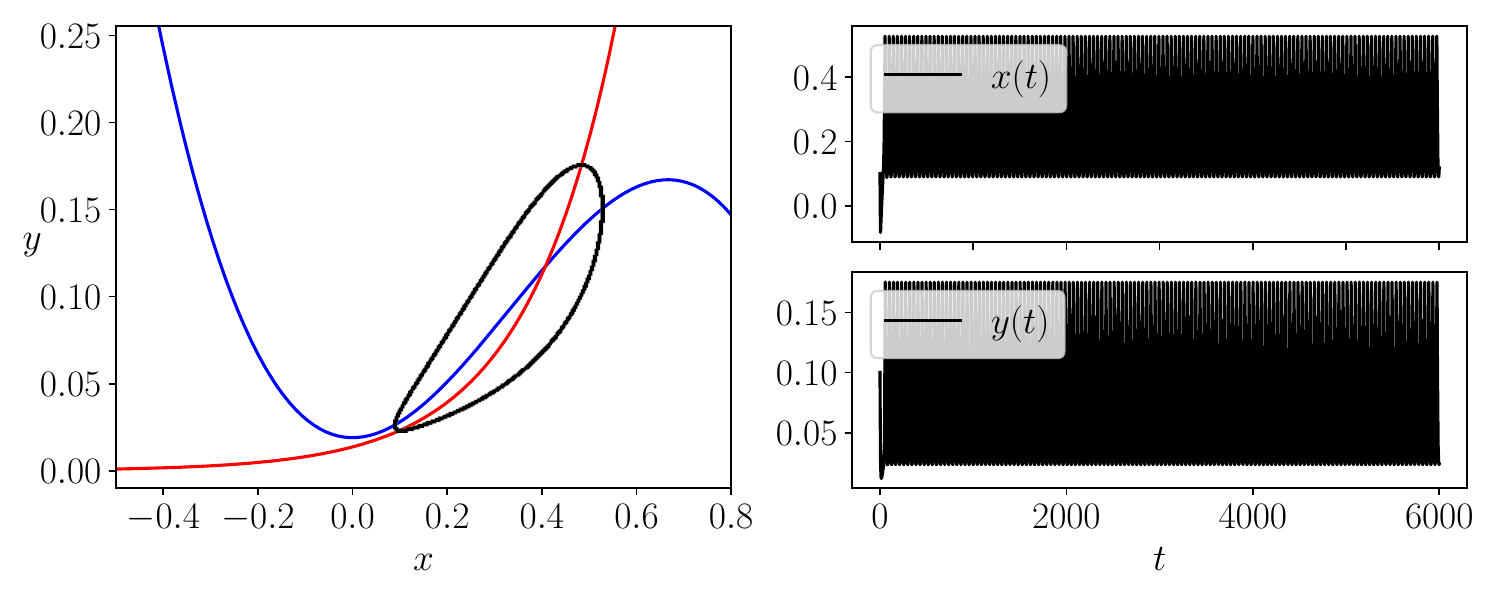}\\
(g) $\beta = 0.99$ & (h) $\beta = 1$\\[3pt]
\end{tabular}
\end{center}
\caption{Phase portraits and time series with varying $\beta$ for~\eqref{eq:DML_2D}. We have set $I = 0.019>I_{\max}$. The other parameters are set as~\eqref{eq:param}. This gives $\beta^* \approx 0.98233$. We notice that~\eqref{eq:DML_2D} converges to a stable equilibrium for $\beta <\beta^*$. As $\beta \ge \beta^*$, the equilibrium loses stability through a Hopf bifurcation and a stable limit cycle appears (showing tonic spiking in the dynamical variables).}
\label{fig:pp}
\end{figure*}

Following this, it is imperative to display a bifurcation diagram of the voltage $x$ with $\beta$ as the bifurcation parameter at a fixed $I$ with the other parameters set as~\eqref{eq:param}. We do this in Fig.~\ref{fig:BIF_I_2D} for two different values of $I$: (a) $I = 0.019 > I_{\max}$, (b) $I = 0.022 > I_{\max}$ with $\beta$ in the range $[0.9, 1]$. We will additionally have $\beta^* \approx 0.98772$ for $I = 0.022$. The bifurcation diagrams are computed by a continuation procedure. We start with $\beta = 1$ with initial conditions set to $x(0) = 0.1$ and $y(0) = 0.1$. Then we run the simulation using \texttt{FDEsolver()} for \texttt{tSpan}$=[0, 6000]$. After that we decrease the value of $\beta$ from $1$ and use the $x(6000)$ and $y(6000)$ values generated from the last simulation as the initial conditions for the current simulation. At each $\beta$, we plot the last $500$ data points of $x$. We do this for the whole range of $\beta$ values in both panels. Vertical blue lines in both panels indicate $\beta^*$. We see that for $\beta <\beta^*$ the dynamics converges to the stable equilibrium point. Then $\beta$ crosses $\beta^*$ through a Hopf bifurcation giving rise to a stable limit cycle and the equilibrium point becomes unstable. In Fig.~\ref{fig:Hopf2D} we show the stable and the unstable regions of the equilibrium point $(x^*, y^*)$ on the $(I, \beta)$-plane (with $I$ varied in the range $[0.016, 0.03]$), separated by the Hopf bifurcation curve $\beta^*$ denoted in blue. The broken vertical lines indicate the values $I = 0.019, 0.022$. Below the Hopf curve, the equilibrium is asymptotically stable, and above the curve the equilibrium is unstable for the selected range of $I$ values. Finally, we plot a series of bifurcation diagrams from the perspective of fixing $\beta$ to three different values and varying $I$ in the range $[-0.015, 0.05]$ in Fig.~\ref{fig:BIF_beta_2D}. The simulation technique is kept the same as in Fig.~\ref{fig:BIF_I_2D} and three different cases of $\beta$ are displayed: (a) $\beta = 0.99$, (b) $\beta \approx 0.98233$, and (c) $\beta = 0.97$. We see that as $\beta$ is decreased, the system~\eqref{eq:DML_2D} stabilizes. The vertical broken line indicates the value of $I = 0.019$. Panel (b) is displayed for a $\beta$ value which is close to $\beta^*$ computed from $I = 0.019$.  

\begin{figure}[h]
\begin{center}
\begin{tabular}{c}
  \includegraphics[scale=0.4]{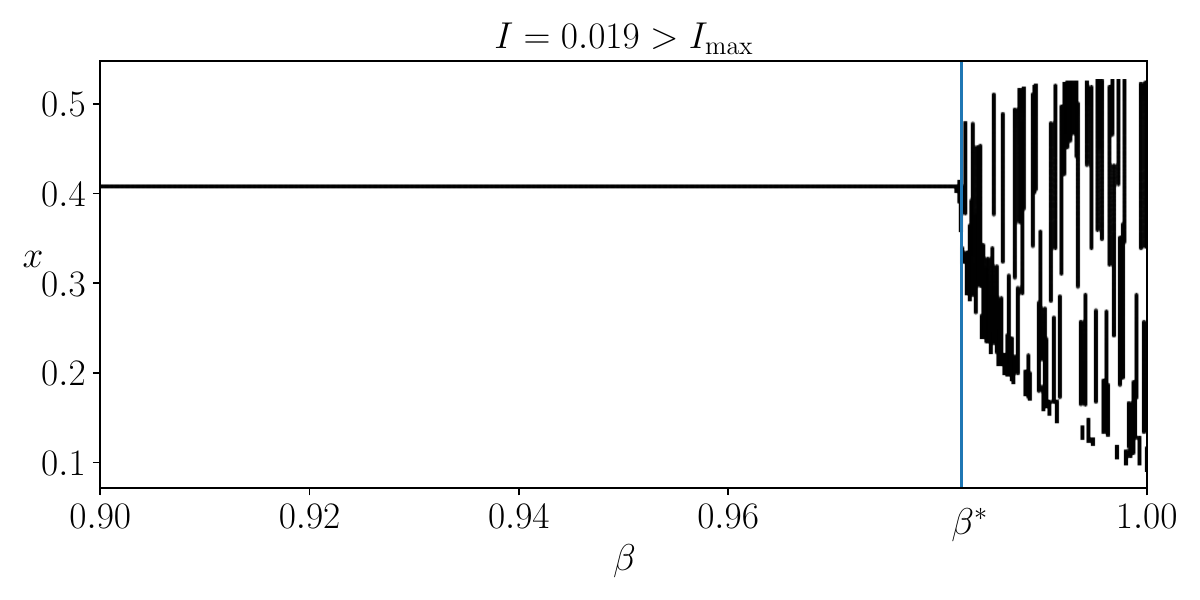} \\
(a) $\beta^* \approx 0.98233$ \\[3pt]
\includegraphics[scale=0.4]{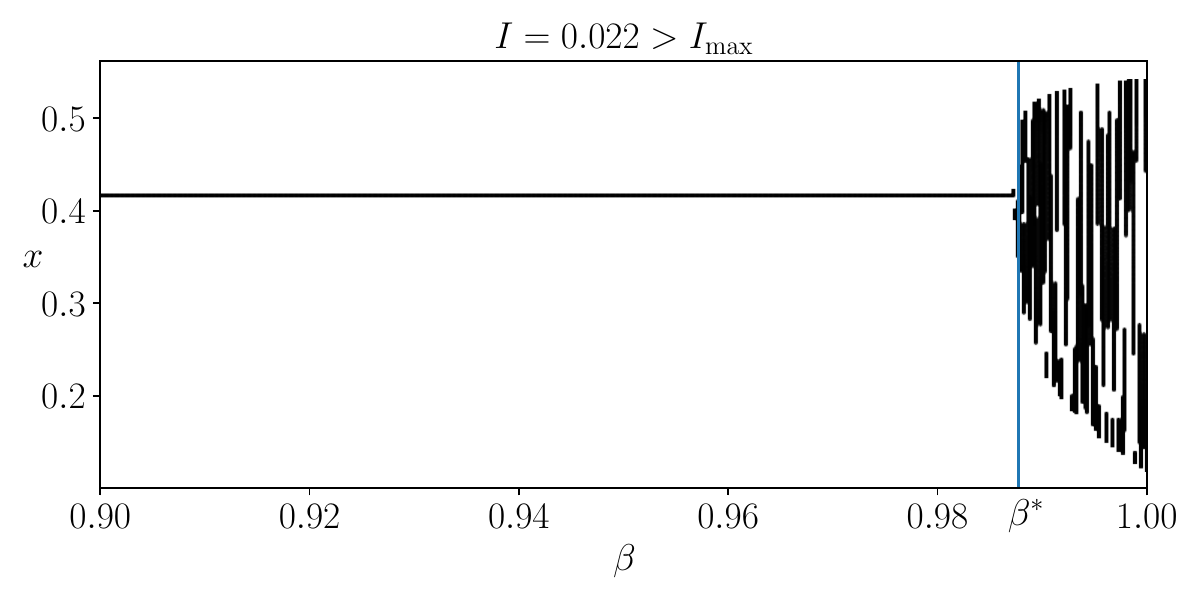} \\
(b) $\beta^* \approx 0.98772$ \\[3pt]
\end{tabular}
\end{center}
\caption{Bifurcation diagrams with varying $\beta$ for~\eqref{eq:DML_2D} at $I > I_{\rm max}$. Specifically we have (a) $I=0.019$ whose corresponding phase portraits are displayed in Fig.~\ref{fig:pp} and (b) $I = 0.022$. Other parameters are set following~\eqref{eq:param}. The vertical blue lines indicate the $\beta^*$ for each of the cases where a Hopf bifurcation occurs. The $\beta^*$ values for these cases are approximately (a) $0.98233$ and (b) $0.98772$.}
\label{fig:BIF_I_2D}
\end{figure}

\begin{figure}
    \centering
    \includegraphics[width=0.5\linewidth]{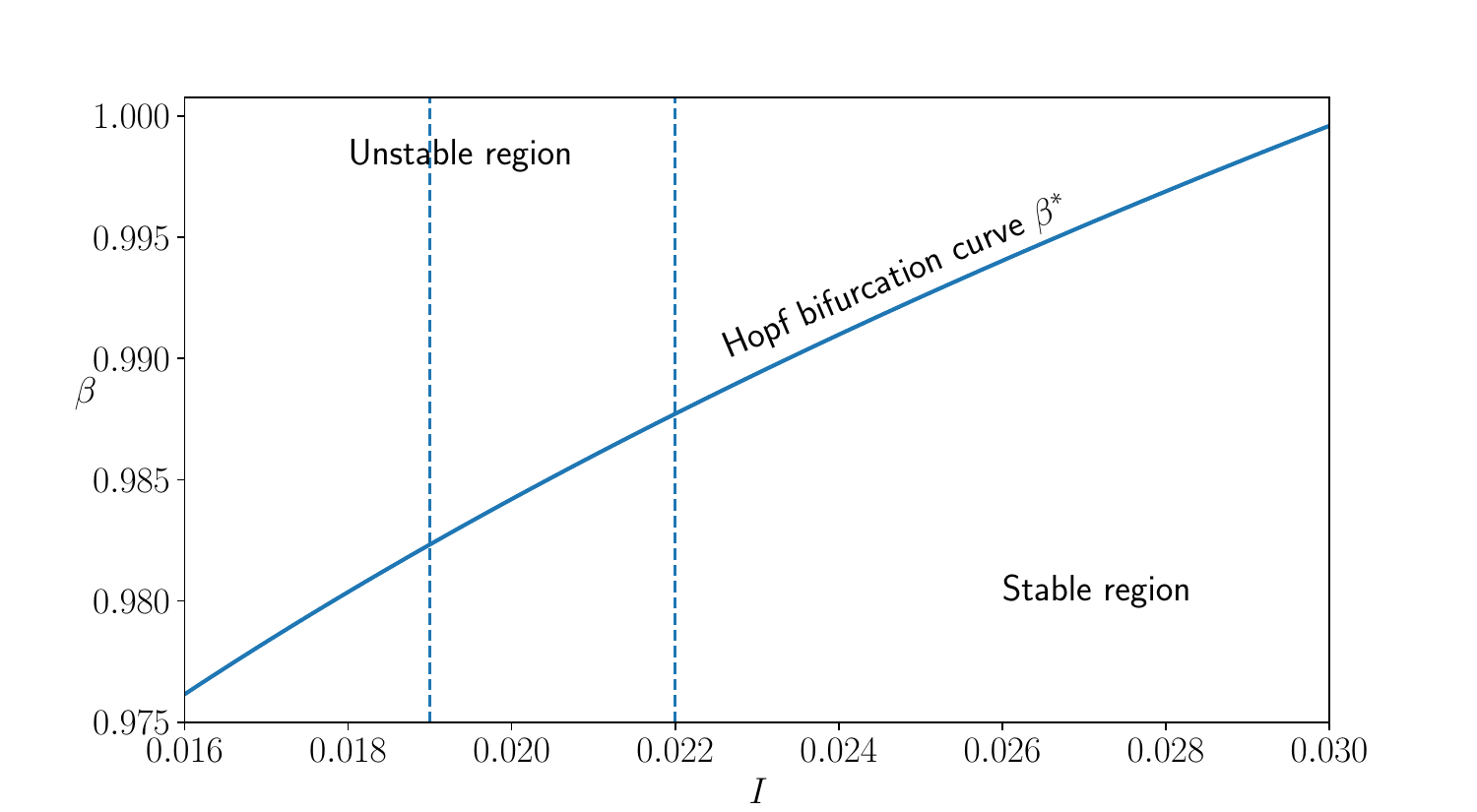}
    \caption{Hopf bifurcation curve $\beta^*$ for~\eqref{eq:DML_2D} separating the stable and the unstable regions of the equilibrium point $(x^*, y^*)$ on the $(I, \beta)$-plane. The vertical broken lines correspond to $I = 0.019$ and $I=0.022$. The system undergoes a Hopf bifurcation when $\beta$ crosses the curve from the bottom for a particular $I$ value. Other parameter values are set as~\eqref{eq:param}.}
    \label{fig:Hopf2D}
\end{figure}

\begin{figure}[h]
\begin{center}
\begin{tabular}{cc}
  \includegraphics[scale=0.3]{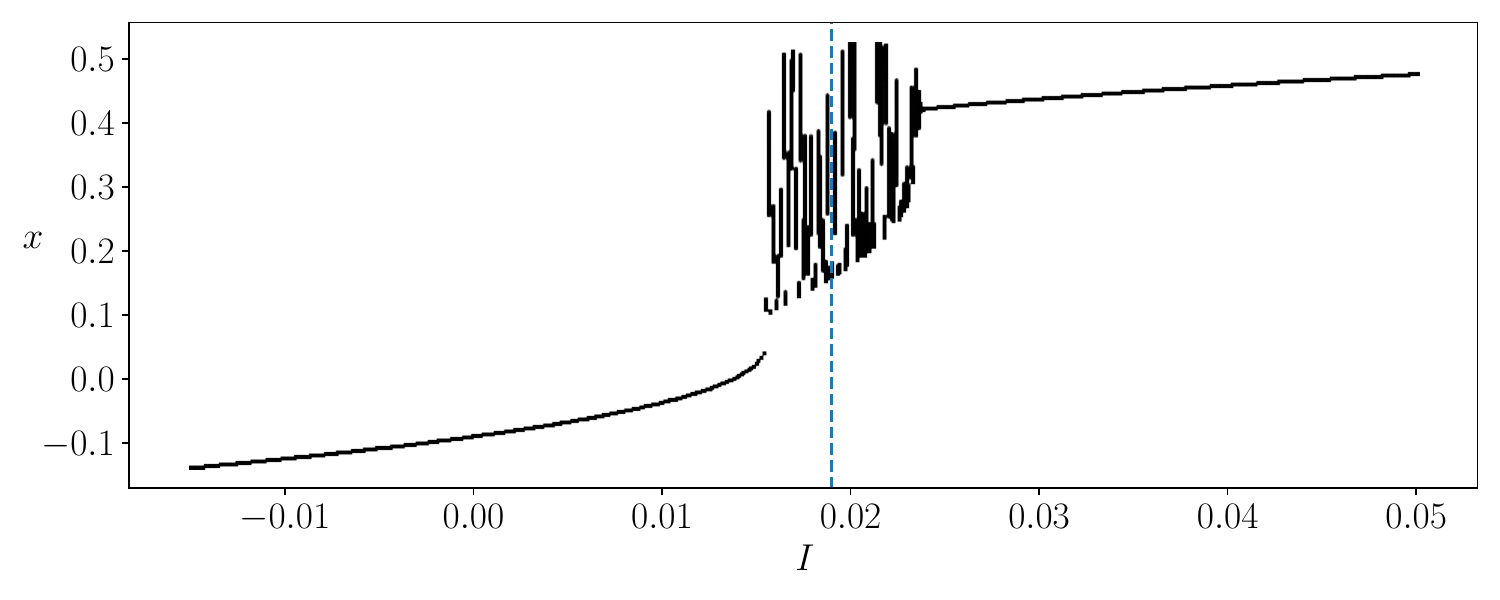} & \includegraphics[scale=0.3]{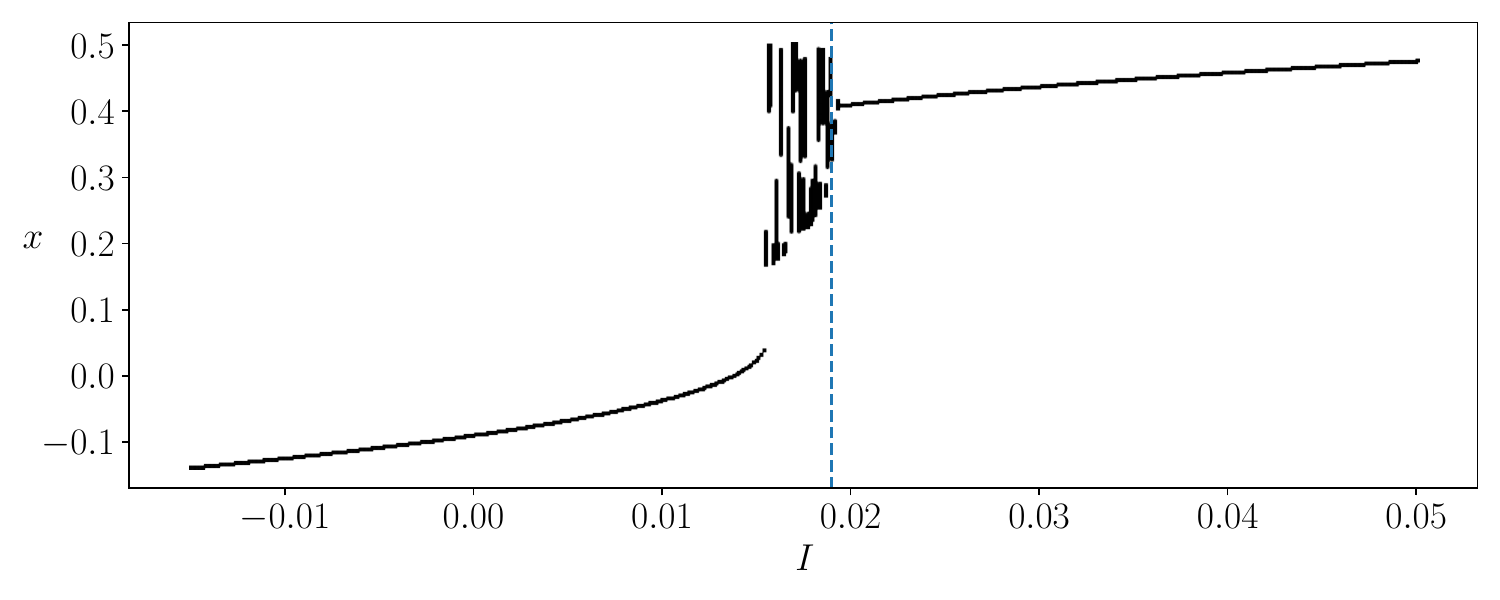}\\
(a) $\beta = 0.99$ & (b) $\beta \approx 0.98233$ \\[3pt]
\includegraphics[scale=0.3]{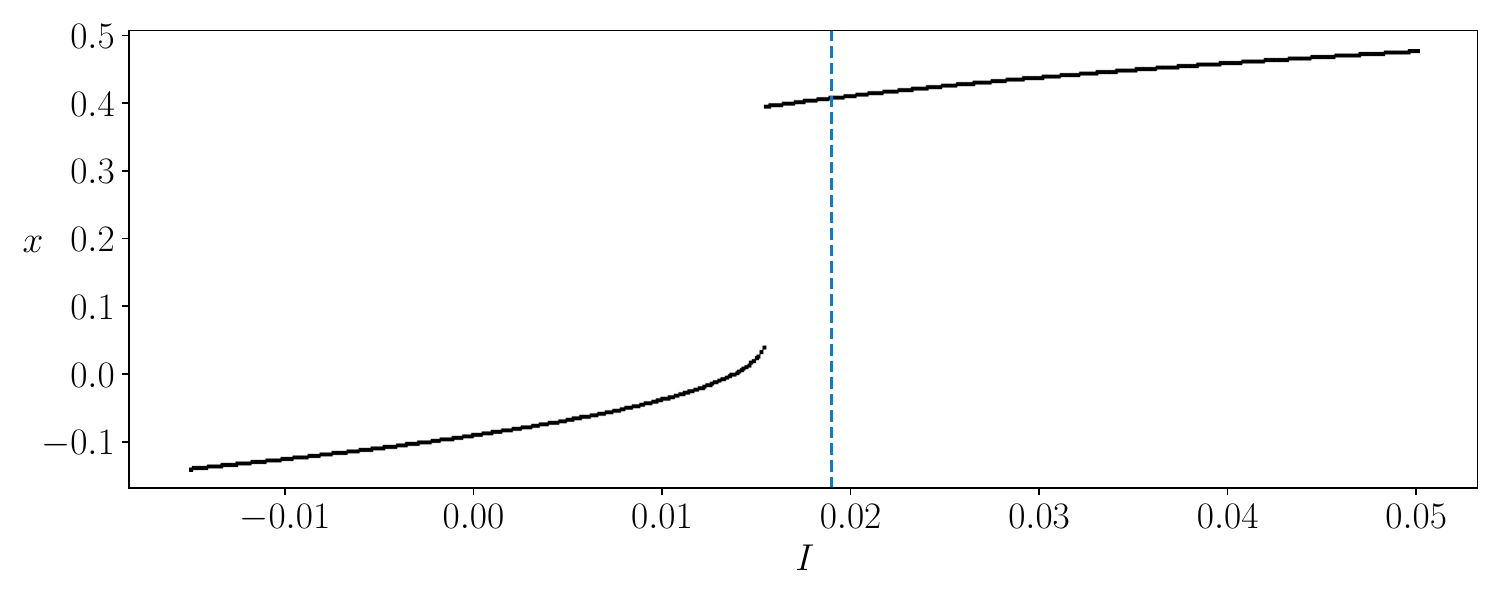} \\
(c) $\beta = 0.97$ \\[3pt]
\end{tabular}
\end{center}
\caption{Bifurcation diagrams with varying $I$ for~\eqref{eq:DML_2D} for different fixed values of $\beta$: (a) $\beta = 0.99$, (b) $\beta \approx 0.98233$, and (c) $\beta = 0.97$. We have set the parameters according to~\eqref{eq:param}. The blue vertical broken line in each panel indicate the $I$ value $I = 0.019$. We know the Hopf bifurcation occurs at $\beta^* \approx 0.98233$ which was the motivation for plotting panel (b). We observe that decreasing $\beta$ stabilizes the system.}
\label{fig:BIF_beta_2D}
\end{figure}

\section{Theoretical and numerical analysis of the coupled systems of dML neurons}
\label{sec:dML_4D}

The systems of four-dimensional fractional differential equations representing a bidirectionally coupled dimer of dML neurons through two different coupling mechanisms are introduced in Sec.~\ref{sec:intro}, see~\eqref{eq:DML_4D} and~\eqref{eq:dML_4D_chemical}. First we look into~\eqref{eq:DML_4D}. We realise that the system is symmetric in terms of two dML neurons coupled to each other via a bidirectional coupling, with a coupling strength $\theta >0$. We look into the stability analysis of a symmetric equilibrium point of~\eqref{eq:DML_4D}, which we are going to denote as $(x^*, y^*, x^*, y^*)$. We closely follow the analysis done for coupled systems of fractional order neuron models in~\citep{MoKa24, Ka17}. The equilibrium can be computed by solving for $x_1, x_2$ from the equations
\begin{align}
\label{eq:eqPoints4D}
    f(x_1, x_2) = 0,\qquad f(x_2, x_1) = 0,
\end{align}
where 
$$
f(x_i, x_j) = x_i^2(1-x_i) - \frac{Ae^{\alpha x_i}}{\gamma} +I + \theta(x_j - x_i). 
$$
For a symmetric equilibrium point, the value of $y^*$ is then given by
$$
y^* = \frac{Ae^{\alpha x}}{\gamma}.
$$
The Jacobian matrix of~\eqref{eq:DML_4D} at the symmetric equilibrium point $(x^*, y^*, x^*, y^*)$ is a block matrix given by 
$$
J = \begin{bmatrix}
    J_1 & \Theta \\ \Theta & J_1
\end{bmatrix},
$$
where
$
J_1 = \begin{bmatrix}
    x^*(2-3x^*) - \theta & -1 \\
    \alpha Ae^{\alpha x^*} & -\gamma
\end{bmatrix},
$
and $\Theta$ is the coupling matrix given by
$
    \Theta = \begin{bmatrix}
        \theta &0 \\ 0 & 0
    \end{bmatrix}
$. The characteristic equation of $J$ is $\det(J_1 + \Theta - \lambda I)\det(J_1 - \Theta - \lambda I) = 0$ with the eigenvalues of $J$ as the union of the eignevalues of $J \pm \Theta$. We have
\begin{align}
    J_1 + \Theta = \begin{bmatrix}
        x^*(2-3x^*) & - 1 \\
        \alpha A e^{\alpha x^*} & - \gamma
    \end{bmatrix}, \qquad 
    J_1 - \Theta = \begin{bmatrix}
        x^*(2-3x^*)-2\theta & - 1 \\
        \alpha A e^{\alpha x^*} & - \gamma
    \end{bmatrix}, \nonumber
\end{align}
with their traces and determinants represented by $\delta^{\pm}(x^*) =\det(J_1 \pm \Theta)$ and $\tau^{\pm}(x^*) = {\rm trace}(J_1 \pm \Theta)$. We have
\begin{equation}
\label{eq:traceDet4D}
\begin{aligned}
    \tau^+(x^*)&= x^*(2-3x^*) - \gamma, \\
    \delta^+(x^*) &= -\gamma x^*(2-3x^*) + \alpha Ae^{\alpha x^*}, \\
    \tau^-(x^*)&= x^*(2-3x^*) - \gamma - 2\theta, \\
    \delta^-(x^*) &= -\gamma \big[ x^*(2-3x^*) -2\theta] + \alpha Ae^{\alpha x^*}.
\end{aligned}
\end{equation}

\begin{theorem}
    Suppose
    \begin{itemize}
        \item[i)] $-\gamma x^*(2-3x^*) > - \alpha Ae^{\alpha x^*}$,
        \item[ii)] $-\gamma x^*(2-3x^*) < 2\sqrt{-\gamma x^*(2-3x^*) + \alpha Ae^{\alpha x^*}} \cos\big(\frac{\beta \pi}{2} \big)$, and
        \item[iii)] $-\gamma x^*(2-3x^*) < 2\big[\theta + \sqrt{-\gamma x^*(2-3x^*) + \alpha Ae^{\alpha x^*} + 2\theta\gamma} \cos\big(\frac{\beta \pi}{2} \big)\big]$.
    \end{itemize}
    Then a symmetric equilibrium point $(x^*, y^*, x^*, y^*)$ of~\eqref{eq:DML_4D} is asymptotically stable.
\end{theorem}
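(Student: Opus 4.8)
The plan is to reduce the four--dimensional stability question to two planar ones and then apply the fractional--order criteria already collected in Section~\ref{sec:Frac}. By Matignon's criterion the symmetric equilibrium $(x^*,y^*,x^*,y^*)$ is asymptotically stable as soon as every eigenvalue $\lambda$ of the block Jacobian $J$ satisfies $|\arg(\lambda)|>\frac{\beta\pi}{2}$. As already recorded in the text immediately preceding the statement, the identical--neuron, bidirectional--coupling structure makes the characteristic polynomial of $J$ factor as $\det(J_1+\Theta-\lambda I)\det(J_1-\Theta-\lambda I)$, so that $\mathrm{spec}(J)=\mathrm{spec}(J_1+\Theta)\cup\mathrm{spec}(J_1-\Theta)$. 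Hence it suffices to show that each of the two $2\times 2$ matrices $J_1+\Theta$ and $J_1-\Theta$ has all of its eigenvalues in the sector $|\arg(\lambda)|>\frac{\beta\pi}{2}$.

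For that I would invoke Kaslik's planar condition~\eqref{eq:StabCond}, applied separately to the two blocks: $J_1+\Theta$ is sector--stable provided $\delta^+(x^*)>0$ and $\tau^+(x^*)<2\sqrt{\delta^+(x^*)}\cos\big(\frac{\beta\pi}{2}\big)$, and $J_1-\Theta$ is sector--stable provided $\delta^-(x^*)>0$ and $\tau^-(x^*)<2\sqrt{\delta^-(x^*)}\cos\big(\frac{\beta\pi}{2}\big)$. Substituting the explicit expressions~\eqref{eq:traceDet4D} then matches these four inequalities against the hypotheses of the theorem. The key simplification is that $\delta^-(x^*)=\delta^+(x^*)+2\theta\gamma$ with $\theta,\gamma>0$, so hypothesis~i)---which is precisely $\delta^+(x^*)>0$---already forces $\delta^-(x^*)>0$; this is why a single positivity condition suffices. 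Rearranging, hypothesis~ii) is exactly $\tau^+(x^*)<2\sqrt{\delta^+(x^*)}\cos\big(\frac{\beta\pi}{2}\big)$ and hypothesis~iii) is exactly $\tau^-(x^*)<2\sqrt{\delta^-(x^*)}\cos\big(\frac{\beta\pi}{2}\big)$. Combining the two sector--stability statements with the spectral decomposition above yields the asymptotic stability of the symmetric equilibrium.

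The main obstacle is bookkeeping rather than conceptual. One must be careful to apply the planar criterion to the \emph{sum} and \emph{difference} blocks $J_1\pm\Theta$ rather than to $J_1$ and $\Theta$ individually, and to carry the signs of the $+2\theta\gamma$ term in $\delta^-(x^*)$ and the $-2\theta$ term in $\tau^-(x^*)$ through the rearrangement consistently. It is also worth a remark that $\delta^{\pm}(x^*)>0$ rules out a zero eigenvalue in either block, so Kaslik's criterion applies with no degenerate case, and that the entire reduction relies on the neurons being identical and the coupling symmetric---if the local parameters differed, $J$ would lose its symmetric block form and the factorization would fail. Finally, since Kaslik's criterion is an equivalence, the three hypotheses are in fact necessary as well as sufficient for the $\beta$--stability of the symmetric equilibrium, although the theorem only claims the sufficiency.
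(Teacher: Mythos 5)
Your proof is correct and follows the paper's own argument essentially verbatim: the block factorization $\mathrm{spec}(J)=\mathrm{spec}(J_1+\Theta)\cup\mathrm{spec}(J_1-\Theta)$, Kaslik's planar criterion applied to each block, hypothesis~i) giving $\delta^+(x^*)>0$ and hence $\delta^-(x^*)=\delta^+(x^*)+2\theta\gamma>0$, and hypotheses~ii), iii) supplying the two trace inequalities $\tau^{\pm}(x^*)<2\sqrt{\delta^{\pm}(x^*)}\cos\big(\tfrac{\beta\pi}{2}\big)$. The only caveat, which you share with the paper's proof (``rearranging assumption (ii) and (iii) a little bit''), is that the printed left-hand sides of ii) and iii) read $-\gamma x^*(2-3x^*)$ rather than the trace $x^*(2-3x^*)-\gamma$, so your claim that they are \emph{exactly} the required trace conditions presupposes the same, apparently typographical, reading of the statement that the paper itself makes.
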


\begin{proof}
  Using the similar approach as Theorem~\ref{thm:EqStab2D}, it is required that the equilibrium point $(x^*, y^*, x^*, y^*)$ of~\eqref{eq:DML_4D} is asymptotically stable if and only if 
  $$
  \delta^\pm(x^*) >0 \qquad {\rm and} \qquad \tau^\pm(x^*) <2\sqrt{\delta^\pm(x^*)}\cos\bigg(\frac{\beta \pi}{2}\bigg).
  $$
  Now assumption (i) directly means $\delta^+(x^*) >0$. This means $\delta^-(x^*) = \delta^+(x^*) + 2\theta \gamma >0$, because $\theta, \gamma >0$. Rearranging assumption (ii) and (iii) a little bit gives us back $\tau^\pm(x^*) <2\sqrt{\delta^\pm(x^*)}\cos\bigg(\frac{\beta \pi}{2}\bigg)$. 
\end{proof}

\begin{theorem}
    Suppose $-\gamma x^*(2-3x^*) + \alpha Ae^{\alpha x^*} < - 2\theta \gamma$. Then the equilibrium point is a saddle point.
\end{theorem}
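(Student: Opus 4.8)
The plan is to exploit the block structure of the Jacobian $J$ established just above the theorem: the spectrum of $J$ is the union of the spectra of the two $2\times 2$ matrices $J_1+\Theta$ and $J_1-\Theta$, whose determinants are $\delta^+(x^*)$ and $\delta^-(x^*)$ as recorded in~\eqref{eq:traceDet4D}. First I would note the identity $\delta^-(x^*) = \delta^+(x^*) + 2\theta\gamma$, which is immediate from~\eqref{eq:traceDet4D}. The hypothesis reads precisely $\delta^+(x^*) = -\gamma x^*(2-3x^*) + \alpha Ae^{\alpha x^*} < -2\theta\gamma$, so it gives at once $\delta^+(x^*) < 0$, and then, since $\theta,\gamma>0$, also $\delta^-(x^*) = \delta^+(x^*) + 2\theta\gamma < 0$. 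Hence both blocks have negative determinant.

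The second step is the elementary linear-algebra fact that a real $2\times 2$ matrix with negative determinant has two real eigenvalues of opposite sign, since their product equals the determinant. Applying this to $J_1+\Theta$ and to $J_1-\Theta$ shows that $J$ has (at least) one positive real eigenvalue and one negative real eigenvalue — in fact two of each. Thus $J$ possesses invariant directions of both types.

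The final step is to translate this into the fractional-order stability language recalled in Section~\ref{sec:Frac}. A positive real eigenvalue $\lambda$ satisfies $|\arg(\lambda)| = 0 < \tfrac{\beta\pi}{2}$, so by the criterion of Matignon and Ahmed {\em et al.} it corresponds to an unstable direction; a negative real eigenvalue satisfies $|\arg(\lambda)| = \pi > \tfrac{\beta\pi}{2}$ for every $\beta\in(0,1]$, hence a stable direction. The coexistence of stable and unstable directions is exactly what it means for the symmetric equilibrium $(x^*,y^*,x^*,y^*)$ to be a saddle, and, crucially, the conclusion is independent of the fractional order $\beta$.

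I do not expect a real obstacle: the argument is the four-dimensional analogue of the remark following Theorem~\ref{thm:EqUnstab2D}. The only points deserving a line of care are the sign bookkeeping for $\delta^-(x^*)$ through the identity $\delta^-=\delta^++2\theta\gamma$, and making explicit that negative real eigenvalues are always compatible with the $\beta$-dependent sector $|\arg(\lambda)|>\tfrac{\beta\pi}{2}$ when $\beta\le 1$, so that the saddle characterization genuinely holds for all admissible orders.
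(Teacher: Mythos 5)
Your proposal is correct and follows essentially the same route as the paper's proof: the hypothesis is exactly $\delta^+(x^*) < -2\theta\gamma < 0$, and since $\delta^-(x^*) = \delta^+(x^*) + 2\theta\gamma$, a simple rearrangement gives $\delta^-(x^*) < 0$ as well, so both blocks $J_1 \pm \Theta$ have negative determinant and the symmetric equilibrium is a saddle. The only difference is that you spell out the supporting facts the paper leaves implicit (real eigenvalues of opposite sign from a negative $2\times 2$ determinant, and the Matignon argument-condition showing the conclusion holds for every $\beta \in (0,1]$), which is a harmless elaboration rather than a different argument.
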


\begin{proof}
The equilibrium is a saddle point if
$$
  \delta^+(x^*) <0 \qquad {\rm or} \qquad \delta^-(x^*) <0.
  $$
    The inequality $-\gamma x^*(2-3x^*) + \alpha Ae^{\alpha x^*} < - 2\theta \gamma$, directly implies $-\gamma x^*(2-3x^*) + \alpha Ae^{\alpha x^*} < 0$, because $\theta, \gamma>0$, meaning $\delta^+(x^*)<0$. Also a simple rearrangement of $-\gamma x^*(2-3x^*) + \alpha Ae^{\alpha x^*} < - 2\theta \gamma$ directly implies $\delta^-(x^*) <0$.
\end{proof}

A saddle-node bifurcation occurs when 
$$
-\gamma x^*(2-3x^*) + \alpha Ae^{\alpha x^*} = 0 \qquad {\rm and }\qquad \theta = 0.
$$The above cases denote that for saddle-node bifurcation to occur, the coupling should be vanished and each of the systems should have Theorem~\ref{thm:saddleNode} (condition for saddle-node bifurcation for a single-cell system) satisfied.

\begin{theorem}
    Suppose
    \begin{itemize}
        \item[i)] $-\gamma x^*(2-3x^*) + \alpha Ae^{\alpha x^*}>0$, and
        \item[ii)] $x^*(2-3x^*) - \gamma <0$.
    \end{itemize}
    Then the equilibrium point $(x^*, y^*, x^*, y^*)$ is asymptotically stable irrespective of $\beta \in (0, 1]$.
\end{theorem}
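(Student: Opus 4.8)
The plan is to exploit the block structure of the Jacobian exactly as in the two preceding theorems. Since $J = \left[\begin{smallmatrix} J_1 & \Theta \\ \Theta & J_1 \end{smallmatrix}\right]$, its spectrum is the union of the spectra of the two $2\times2$ blocks $J_1+\Theta$ and $J_1-\Theta$, so by Kaslik's $N=2$ criterion~\eqref{eq:StabCond} applied separately to each block, the symmetric equilibrium $(x^*,y^*,x^*,y^*)$ is asymptotically stable if and only if $\delta^{\pm}(x^*)>0$ and $\tau^{\pm}(x^*)<2\sqrt{\delta^{\pm}(x^*)}\cos\!\big(\tfrac{\beta\pi}{2}\big)$, with $\tau^{\pm},\delta^{\pm}$ given by~\eqref{eq:traceDet4D}. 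The goal is to show that hypotheses (i)--(ii) force all four of these inequalities to hold simultaneously for every $\beta\in(0,1]$; this statement is the four-dimensional analogue of Theorem~\ref{thm:betaStar2D}(i).

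First I would settle positivity of the determinants. Hypothesis (i) is precisely $\delta^+(x^*)>0$. For the minus block, from~\eqref{eq:traceDet4D} one has $\delta^-(x^*)=\delta^+(x^*)+2\theta\gamma$, a sum of positive quantities since $\theta,\gamma>0$, hence $\delta^-(x^*)>0$ as well. Next I would handle the trace conditions: hypothesis (ii) reads exactly $\tau^+(x^*)=x^*(2-3x^*)-\gamma<0$, and from~\eqref{eq:traceDet4D} $\tau^-(x^*)=\tau^+(x^*)-2\theta<\tau^+(x^*)<0$ because $\theta>0$. Since $\beta\in(0,1]$ gives $\tfrac{\beta\pi}{2}\in(0,\tfrac{\pi}{2}]$ and therefore $\cos\!\big(\tfrac{\beta\pi}{2}\big)\ge 0$, combining this with $\delta^{\pm}(x^*)>0$ yields $2\sqrt{\delta^{\pm}(x^*)}\cos\!\big(\tfrac{\beta\pi}{2}\big)\ge 0>\tau^{\pm}(x^*)$, so the second Kaslik inequality holds for both blocks and for all admissible $\beta$. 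Together with the determinant positivity this closes the argument.

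The only point that requires a little care — and which I regard as the (rather mild) main obstacle — is the endpoint $\beta=1$, where $\cos\!\big(\tfrac{\beta\pi}{2}\big)=0$ and the right-hand side of the trace inequality collapses to $0$; this is exactly why hypothesis (ii) is imposed as a strict inequality, and the argument above genuinely uses strictness of $\tau^+(x^*)<0$ (hence of $\tau^-(x^*)<0$) rather than $\le$. No case split on $\beta$ is otherwise needed, so stability holds uniformly over $\beta\in(0,1]$. Equivalently, under (i)--(ii) the equilibrium is in fact Hurwitz (all eigenvalues of $J$ have negative real part), which is the $\beta$-independent hallmark of this kind of fractional-order-robust stability.
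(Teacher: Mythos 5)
Your proposal is correct and follows essentially the same route as the paper: use the block structure so the spectrum of $J$ is that of $J_1\pm\Theta$, get $\delta^+(x^*)>0$ from (i) and $\delta^-(x^*)=\delta^+(x^*)+2\theta\gamma>0$, get $\tau^+(x^*)<0$ from (ii) and $\tau^-(x^*)=\tau^+(x^*)-2\theta<0$, and conclude stability for every $\beta\in(0,1]$. Your only addition is to spell out explicitly why $\tau^\pm(x^*)<0$ together with $\delta^\pm(x^*)>0$ satisfies the Kaslik inequality uniformly in $\beta$ (since $\cos\big(\tfrac{\beta\pi}{2}\big)\ge 0$, with the endpoint $\beta=1$ handled by strictness), a detail the paper leaves implicit.
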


\begin{proof}
    The equilibrium point is asymptotically stable irrespective of the order $\beta \in (0, 1]$ if and only if
    $$
  \delta^\pm(x^*) >0 \qquad {\rm and} \qquad \tau^\pm(x^*) <0.
  $$
    The first assumption directly means $\delta^+(x^*)>0$. Also because $\theta>0, \gamma >0$, we can directly as $\delta^-(x^*) = \delta^+(x^*) + 2\theta \gamma >0$. Also the second assumption directly meanse $\tau^+(x^*) <0$. And because $\tau^-(x^*) = \tau^+(x^*) - 2\theta$, we have $\tau^-(x^*)<0$. 
\end{proof}

\begin{theorem}
\label{thm:betaStar4D}
    Suppose 
    \begin{itemize}
        \item[i)] $-\gamma x^*(2-3x^*) + \alpha Ae^{\alpha x^*}>0$, and
        \item[ii)] $x^*(2-3x^*)-\gamma > 2\theta$.
         \end{itemize}
         Then the equilibrium point $(x^*, y^*, x^*, y^*)$ is asymptotically stable if and only if
   \begin{align}
   \label{eq:betaStar4D}
       \beta < \beta^* &= \min \bigg[\frac{2}{\pi} \cos^{-1}\bigg(\min\bigg(1, \frac{x^*(2-3x^*) - 2\theta - \gamma}{2\sqrt{-\gamma x^*(2-3x^*) + \alpha A e^{\alpha x^*} + 2\theta \gamma}} \bigg) \bigg), \nonumber \\
       &\frac{2}{\pi} \cos^{-1}\bigg(\min\bigg(1, \frac{x^*(2-3x^*)  - \gamma}{2\sqrt{-\gamma x^*(2-3x^*) + \alpha A e^{\alpha x^*}}} \bigg) \bigg) \bigg]. 
   \end{align}
\end{theorem}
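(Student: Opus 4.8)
The plan is to reduce the four-dimensional stability question to two independent two-dimensional ones, exactly as in the proof of Theorem~\ref{thm:betaStar2D}. Since the characteristic polynomial of the block Jacobian $J$ factors as $\det(J_1+\Theta-\lambda I)\,\det(J_1-\Theta-\lambda I)$, the symmetric equilibrium $(x^*,y^*,x^*,y^*)$ is asymptotically stable if and only if each of the $2\times 2$ blocks $J_1\pm\Theta$ is asymptotically stable in the Caputo sense; by Kaslik's criterion~\eqref{eq:StabCond} this is equivalent to $\delta^\pm(x^*)>0$ together with $\tau^\pm(x^*)<2\sqrt{\delta^\pm(x^*)}\cos(\beta\pi/2)$, with $\tau^\pm,\delta^\pm$ as in~\eqref{eq:traceDet4D}.

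First I would dispose of the determinant conditions. Assumption~(i) is precisely $\delta^+(x^*)>0$, and since $\delta^-(x^*)=\delta^+(x^*)+2\theta\gamma$ with $\theta,\gamma>0$, we also get $\delta^-(x^*)>0$. Next, assumption~(ii) reads $\tau^+(x^*)=x^*(2-3x^*)-\gamma>2\theta>0$, and because $\tau^-(x^*)=\tau^+(x^*)-2\theta$ it follows that $\tau^-(x^*)>0$ as well, so both traces are strictly positive.

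With $\tau^\pm>0$ and $\delta^\pm>0$, I would handle the remaining inequality $\tau^\pm(x^*)<2\sqrt{\delta^\pm(x^*)}\cos(\beta\pi/2)$ block by block, mimicking case~(ii) of the proof of Theorem~\ref{thm:betaStar2D}. If $\tau^\pm(x^*)/(2\sqrt{\delta^\pm(x^*)})\ge 1$, the inequality would force $\cos(\beta\pi/2)>1$, which is impossible, so that block (and hence the equilibrium) is unstable for every $\beta\in(0,1]$; this is consistent with the formula, since then the corresponding $\min(1,\cdot)$ equals $1$ and $\cos^{-1}(1)=0$. Otherwise $\tau^\pm(x^*)/(2\sqrt{\delta^\pm(x^*)})<1$, and since $\beta\pi/2\in(0,\pi/2]$ lies where cosine is strictly decreasing, the inequality is equivalent to $\beta<\tfrac{2}{\pi}\cos^{-1}\!\big(\tau^\pm(x^*)/(2\sqrt{\delta^\pm(x^*)})\big)$. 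Writing $\beta^*_{\pm}$ for this critical value (with the $\min(1,\cdot)$ folded in to cover the degenerate case), asymptotic stability of both blocks holds if and only if $\beta<\min(\beta^*_{+},\beta^*_{-})=:\beta^*$. Substituting $\tau^+=x^*(2-3x^*)-\gamma$, $\delta^+=-\gamma x^*(2-3x^*)+\alpha Ae^{\alpha x^*}$, $\tau^-=x^*(2-3x^*)-\gamma-2\theta$, and $\delta^-=-\gamma x^*(2-3x^*)+\alpha Ae^{\alpha x^*}+2\theta\gamma$ yields exactly~\eqref{eq:betaStar4D}.

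The content is essentially bookkeeping once the block factorisation is in hand; the one point that needs a little care is the ``only if'' direction together with the $\min(1,\cdot)$ truncation — one must check that whenever $\beta\ge\beta^*$ at least one of the two trace inequalities fails, so that $J$ has an eigenvalue with $|\arg\lambda|\le\beta\pi/2$ and Matignon's criterion is violated, including the degenerate situation where one ratio is $\ge 1$ and $\beta^*$ collapses to $0$. Verifying that the two-sided bound genuinely captures both blocks simultaneously, rather than just one of them, is the main (mild) obstacle.
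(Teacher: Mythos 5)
Your proposal is correct and follows essentially the same route as the paper: use the block factorisation of $J$ into $J_1\pm\Theta$, note that assumption (i) gives $\delta^{\pm}(x^*)>0$ and assumption (ii) gives $\tau^{\pm}(x^*)>0$, then apply the argument of Theorem~\ref{thm:betaStar2D} (Kaslik's criterion with the $\min(1,\cdot)$ truncation) to each block and take the minimum of the two critical orders before substituting $\tau^{\pm},\delta^{\pm}$. Your write-up is in fact more explicit than the paper's proof, which compresses the per-block case analysis into ``following similar technical details as Theorem~\ref{thm:betaStar2D}.''
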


\begin{proof}
    The first assumption means $\delta^\pm(x^*) >0$. The second assumption implies that $\tau^-(x^*)>0$, which also directly implies $\tau^+(x^*) = \tau^-(x^*)+2\theta>0$ because $\theta>0$. Following similar technical details as for~\eqref{eq:DML_2D} in Theorem~\ref{thm:betaStar2D}, it can be deduced that the symmetric equilibrium point for~\eqref{eq:DML_4D} is asymptotically stable if and only if 
    \begin{align}
       \beta < \beta^* &= \min \bigg[\frac{2}{\pi} \cos^{-1}\bigg(\min\bigg(1, \frac{\tau^-(x^*)}{2\sqrt{\delta^-(x^*)}} \bigg) \bigg),  \frac{2}{\pi} \cos^{-1}\bigg(\min\bigg(1, \frac{\tau^+(x^*)}{2\sqrt{\delta^+(x^*)}} \bigg) \bigg) \bigg]. \nonumber 
   \end{align}
Substituting the values of $\tau^\pm(x^*)$ and $\delta^{\pm}(x^*)$ in the above formula gives us~\eqref{eq:betaStar4D}.
\end{proof}

From~\eqref{eq:traceDet4D}, we directly see that $\delta^-(x^*)>\delta^+(x^*)$ and $\tau^-(x^*) < \tau^+(x^*)$ because $\theta >0$. This automatically means for $\tau^\pm(x^*), \delta^\pm(x^*)>0$, we will have $\frac{\tau^-(x^*)}{2\sqrt{\delta^-(x^*)}} < \frac{\tau^+(x^*)}{2\sqrt{\delta^+(x^*)}}$, implying $\cos^{-1}\bigg(\frac{\tau^-(x^*)}{2\sqrt{\delta^-(x^*)}} \bigg) > \cos^{-1}\bigg(\frac{\tau^+(x^*)}{2\sqrt{\delta^+(x^*)}} \bigg)$. This essentially means the formula of $\beta^*$~\eqref{eq:betaStar4D} reduces to
\begin{align}
    \beta^* &= \frac{2}{\pi} \cos^{-1}\bigg(\min\bigg(1, \frac{\tau^+(x^*)}{2\sqrt{\delta^+(x^*)}} \bigg) \bigg) \nonumber \\
    &= \frac{2}{\pi} \cos^{-1}\bigg(\min\bigg(1, \frac{x^*(2-3x^*)  - \gamma}{2\sqrt{-\gamma x^*(2-3x^*) + \alpha A e^{\alpha x^*}}} \bigg) \bigg) \nonumber,
\end{align}
which is exactly what we have for the formula of $\beta^*$ for the single neuron model~\eqref{eq:DML_2D}, see~\eqref{eq:betaStar}.

Thus for the case of the dimer~\eqref{eq:DML_4D}, a Hopf bifurcation also occurs at~\eqref{eq:betaStar4D}. The symmetric equilibrium $(x^*, y^*, x^*, y^*)$ loses its stability as $\beta$ is increased beyond $\beta^*$ and eventually stable limit cycles appear for both neurons. We initiate the numerical analysis of the coupled system by first looking into their phase portraits and time series. The initial conditions for the first neuron are set as $(x_1(0), y_1(0)) =(0.1, 0.1)$ and for the second neuron are set as $(x_2(0), y_2(0)) = (-0.2, 0.1)$. Local parameters of both neurons are~\eqref{eq:param}, indicating that the neurons are identical. Function \texttt{F} for using \texttt{FDEsolver()} is set as the right hand side of~\eqref{eq:DML_4D}. Time span is \texttt{tSpan}$=[0, 6000]$ and time step $h = 0.01$. External current $I$ is set to $I = 0.19>I_{\max}$. We treat $\beta$ and $\theta$ as the primary bifurcation parameters and vary them accordingly to study various firing patterns. All parameters are collected in the vector \texttt{par} for running the simulation.

For $\theta = 0.008$, the phase portraits and time series of both neurons from~\eqref{eq:DML_4D} for $I = 0.019>I_{\max}$ are displayed in Fig.~\ref{fig:pp4D}. We study a symmetric equilibrium point $(x^*, y^*, x^*, y^*)$ which is unique for this choice of $I$. In each panel the left hand side displays the $x$- and $y$-nullclines of~\eqref{eq:DML_4D}. Note that the nullclines will be equivalent for both the neurons for a symmetric equilibrium point. Again for plotting the phase portraits we discard the first $10^5$ data points out of the $6 \times 10^5$ data points. Phase portrait for the first neuron is colored in green and the second neuron in black. On the right hand side we plot the time series for both $x$ and $y$ corresponding to both neurons and we do not discard any data points. The $x$-variable time series is colored in green for the first neuron and in black for the second neuron, along with the time series of the $y$-variable colored in orange for both. We study the dynamics by again varying $\beta$ in the range $[0.9, 1]$ transitioning through $\beta^*$. Solving for a symmetric equilibrium point for~\eqref{eq:DML_4D} from~\eqref{eq:eqPoints4D} ultimately reduces to solving for $x^*$ from
\begin{align}
\label{eq:eqPointSoln4D}
{x^*}^2(1-x^*) - \frac{Ae^{\alpha x^*}}{\gamma} + I = 0,
\end{align}
which is interestingly what we have obtained for the single cell model~\eqref{eq:DML_2D}, see~\eqref{eq:eqPointSoln}. Thus, for $I = 0.019$, we will again have $x^* \approx 0.40772$ with the local parameters for both neurons set to~\eqref{eq:param}. Similar to~\eqref{eq:betaStar} for the single-cell case, we obtain $\beta^* \approx 0.98233$ for the dimer as well. Fig.~\ref{fig:pp4D}-(a) to (c) indicate $\beta < \beta^*$ where the dynamics of~\eqref{eq:DML_4D} always converges asymptotically to the symmetric equilibrium. We observe a Hopf bifurcation at $\beta = \beta^*$ (panel (d)), and the time series displays tonic spiking. As $\beta$ is further increased from $\beta^*$ (panels (e), (f)), we see the appearance of stable limit cycles in the phase portraits of both neurons and the symmetric equilibrium loses its stability (note that the limit cycles of the neurons are not equivalent to each other pertaining to different initial conditions for the neurons). In panel (e) for $\beta = 0.99$ we observe a bursting behavior where the spike counts in every burst deviate in terms of having various small and large amplitudes. We also notice some kind of periodicity. As $\beta$ is increased to $\beta = 1$ (panel (f)), making the system a standard integer order coupled system, we observe periodic bursting. Note that across the two neurons a mixed mode oscillatory behavior also arise~\citep{Tr21, BrKa08} A similar behavior was also reported in a coupled system of fractional-order Wilson Cowan neurons by Mondal {\em et al.}~\citep{MoKa24}. Next, we decrease the coupling strength to $\theta = 0.001$ and again study the corresponding phase portraits and time series for a $\beta$ in the range $[0.9, 1]$ in Fig.~\ref{fig:pp4DB}. We observe that for $I = 0.019$ the $\beta^*$ value does not change for changing the coupling strength due to the fact that the equation for solving $x^*$~\eqref{eq:eqPointSoln4D} is independent of $\theta$. Thus, we have $\beta^* \approx 0.98233$. Note that, this time, we observe tonic spiking in the time series after the system undergoes a Hopf bifurcation at $\beta^*$. When $\beta > \beta^*$, the amplitude of the tonic spikes increases with increasing $\beta$. 

\begin{figure*}[h]
\begin{center}
\begin{tabular}{cc}
  \includegraphics[scale=0.25]{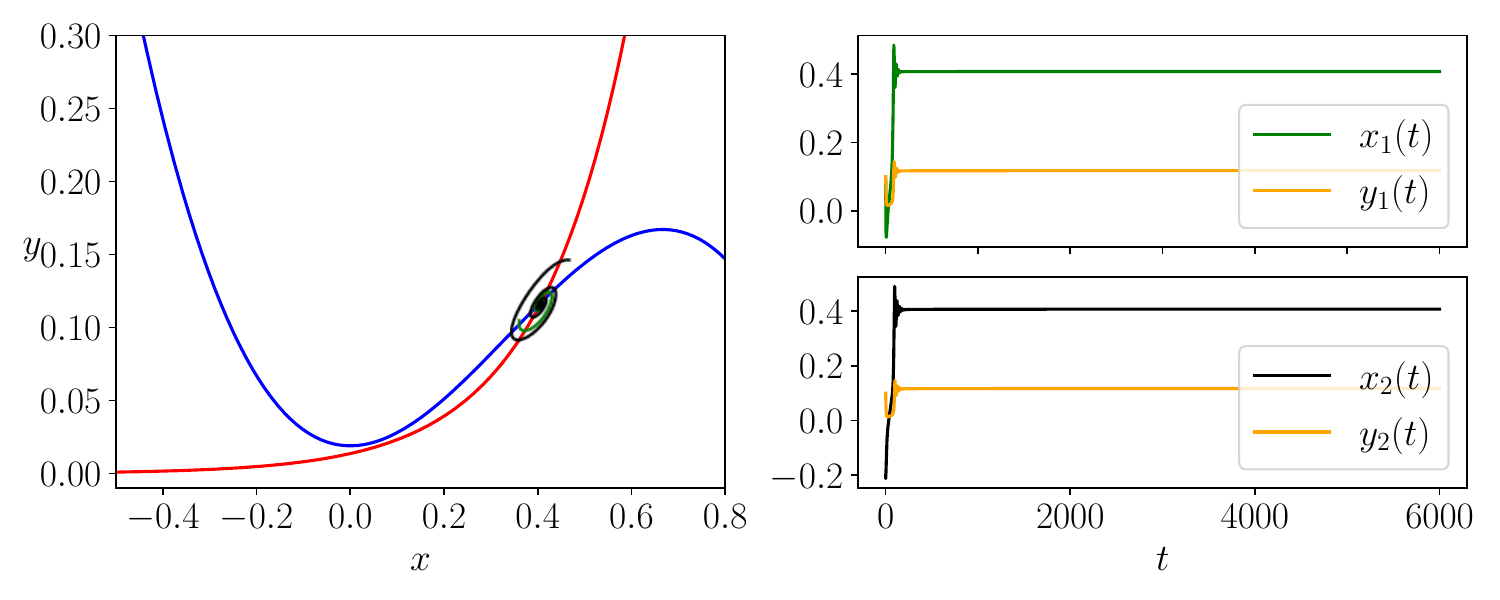} &  \includegraphics[scale=0.25]{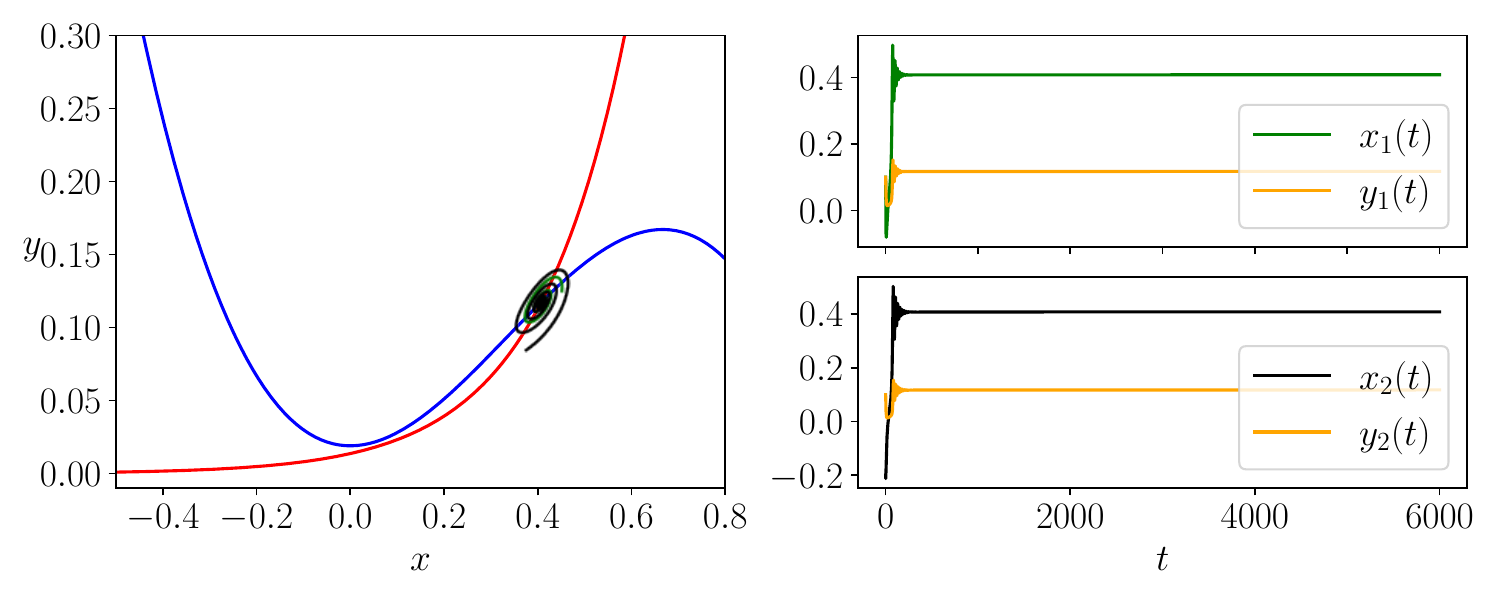} \\
(a) $\beta = 0.9$ & (b) $\beta = 0.93$ \\[3pt]
\includegraphics[scale=0.25]{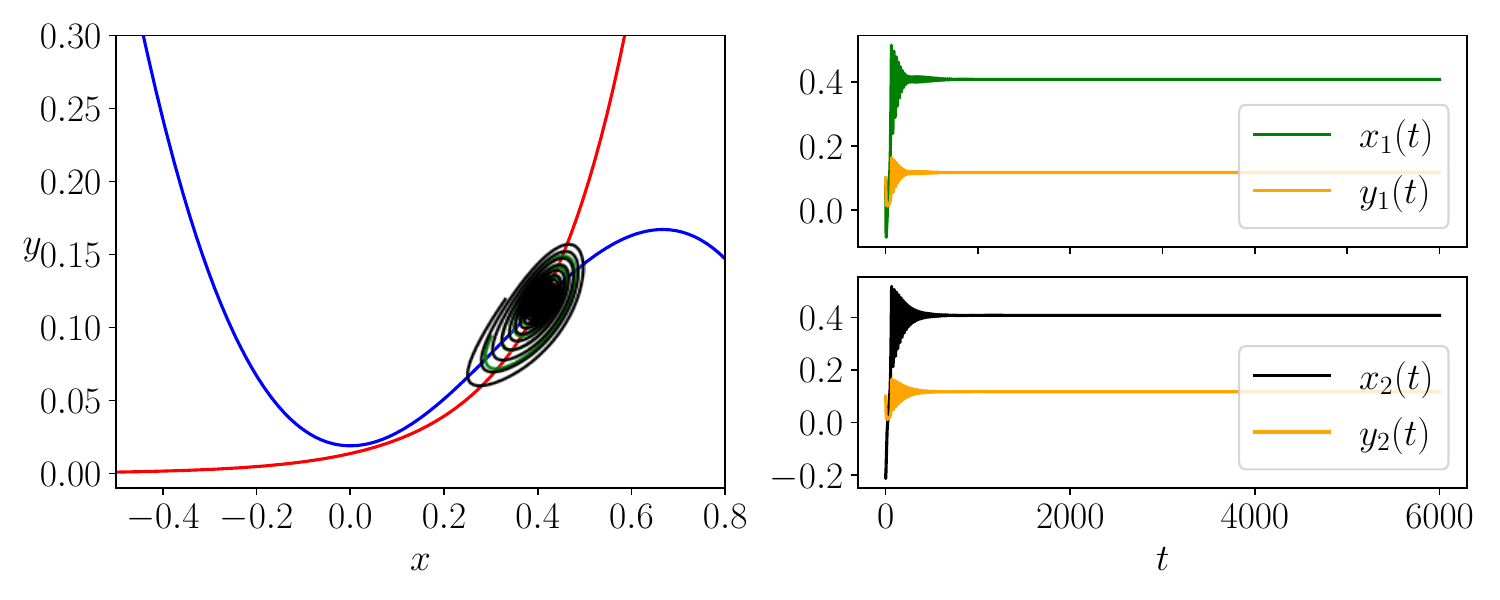} &  \includegraphics[scale=0.25]{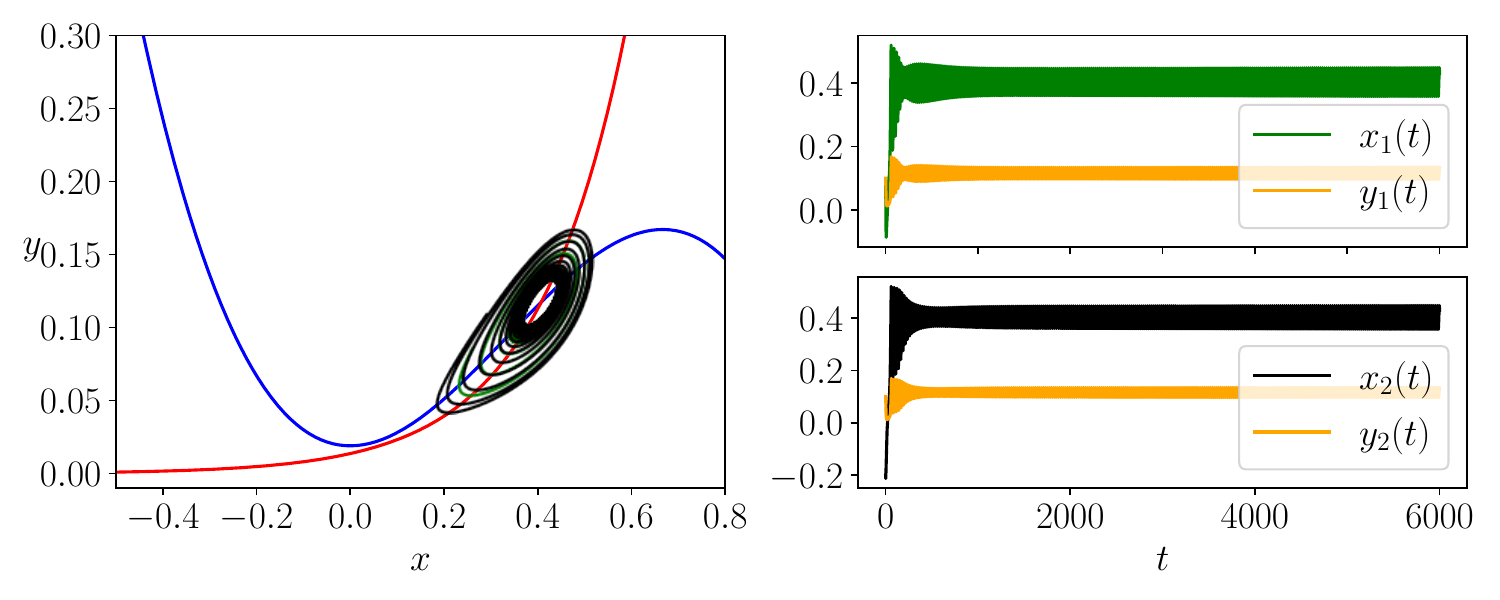} \\
(c) $\beta = 0.97$ &  (d) $\beta = \beta^*$\\[3pt]
\includegraphics[scale=0.25]{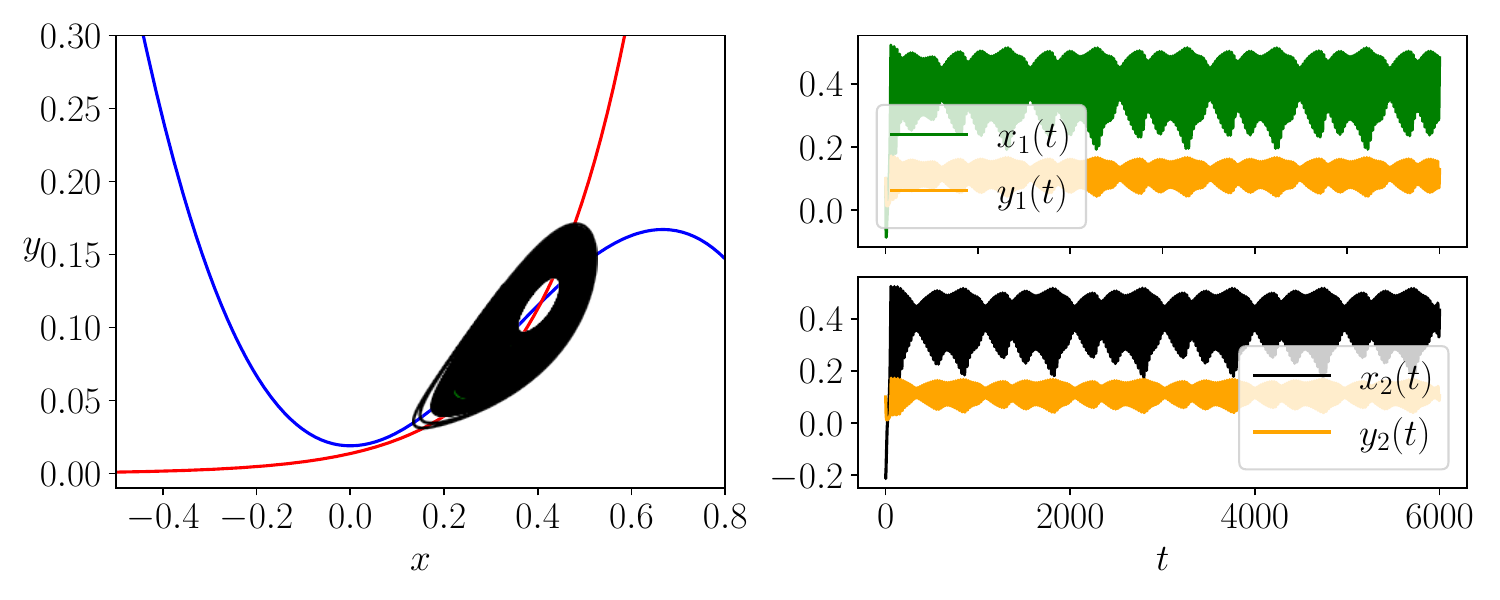} &  \includegraphics[scale=0.25]{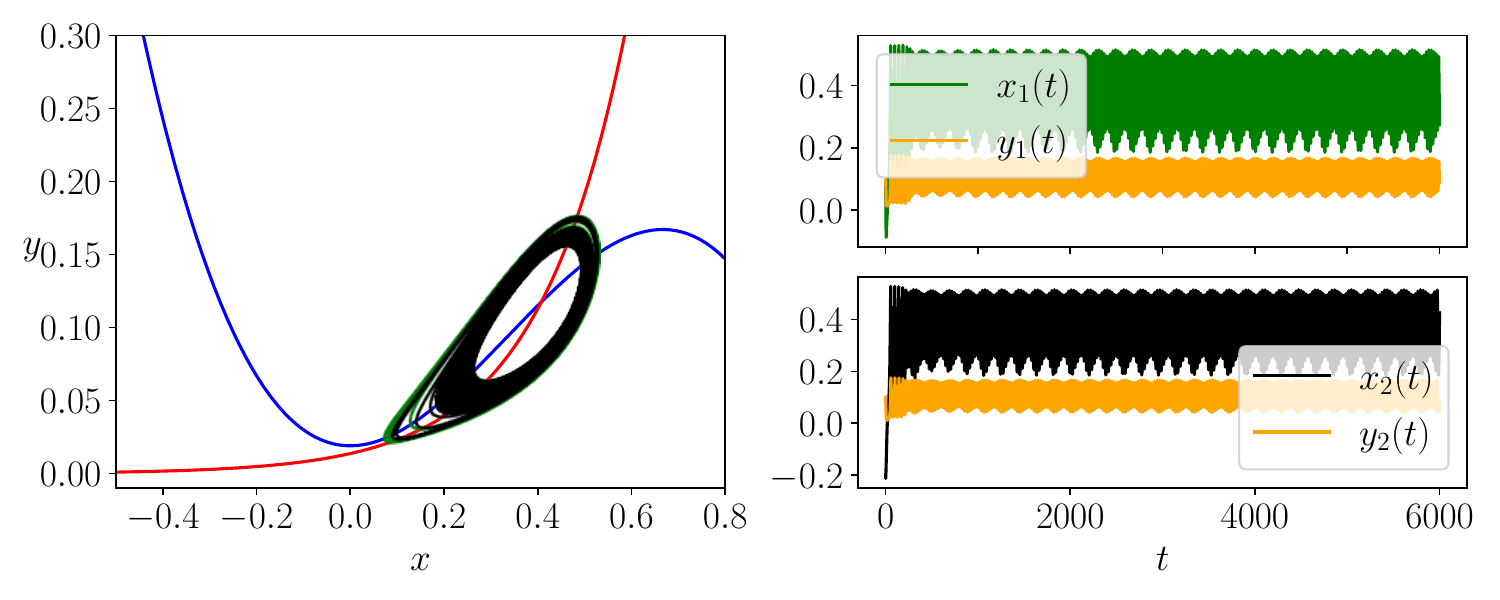} \\
(e) $\beta = 0.99$ & (f) $\beta = 1$ \\[3pt]
\end{tabular}
\end{center}
\caption{Phase portraits and time series with varying $\beta$ for~\eqref{eq:DML_4D} with a coupling strength $\theta = 0.008$. We have set $I = 0.019>I_{\max}$ similar to what we have done for the two-dimensional system. The other local parameters for both the neurons are set as~\eqref{eq:param}. We will have $\beta^* \approx 0.98233$ similar to what has been reported for~\eqref{eq:DML_2D}. The coupled system~\eqref{eq:DML_4D} converges to a stable symmetric equilibrium $(x^*, y^*, x^*, y^*)$ when $\beta < \beta ^*$. When $\beta \ge \beta^*$, the equilibrium loses stability through a Hopf bifurcation and a stable limit cycle appears for both neurons.}
\label{fig:pp4D}
\end{figure*}

\begin{figure*}[h]
\begin{center}
\begin{tabular}{cc}
  \includegraphics[scale=0.25]{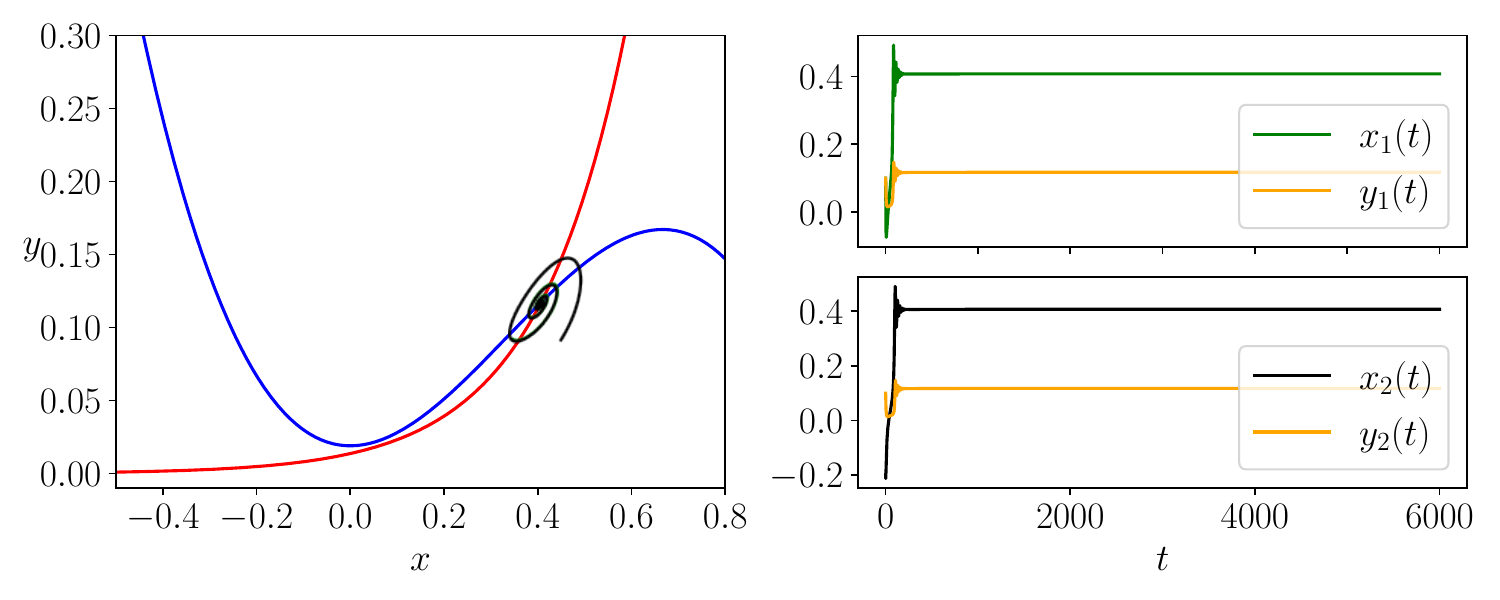} &  \includegraphics[scale=0.25]{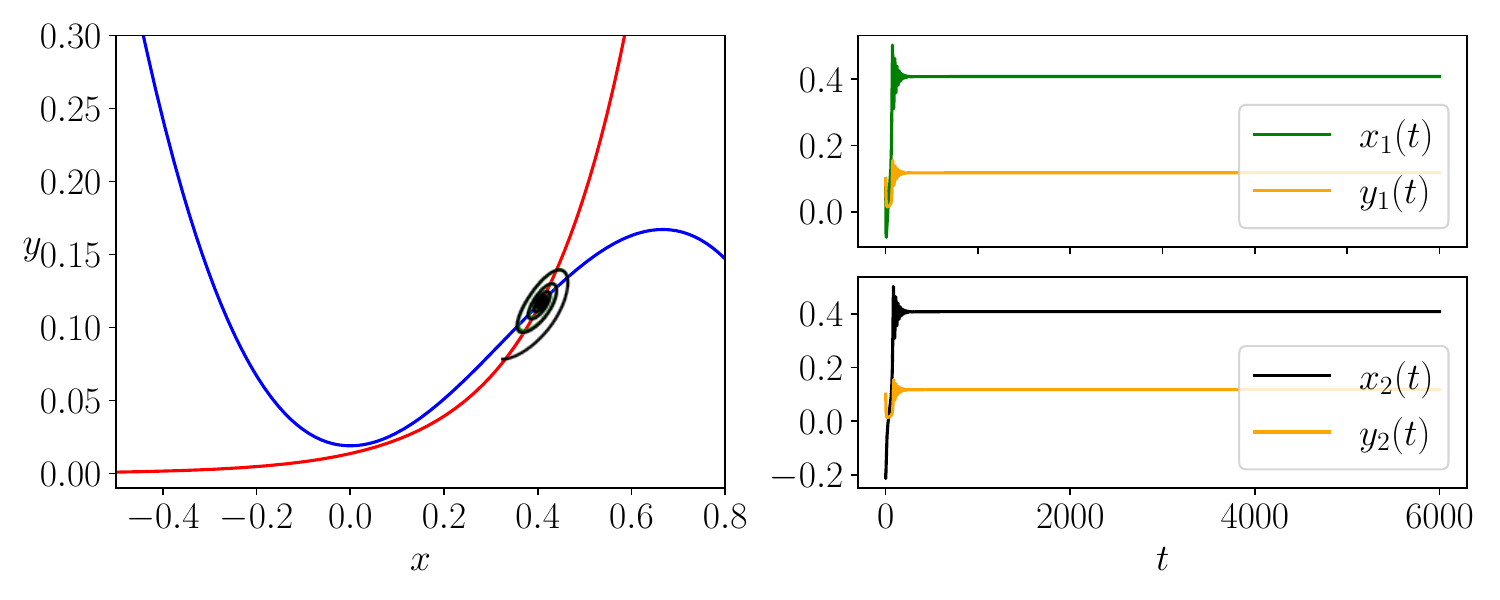} \\
(a) $\beta = 0.9$ & (b) $\beta = 0.93$ \\[3pt]
\includegraphics[scale=0.25]{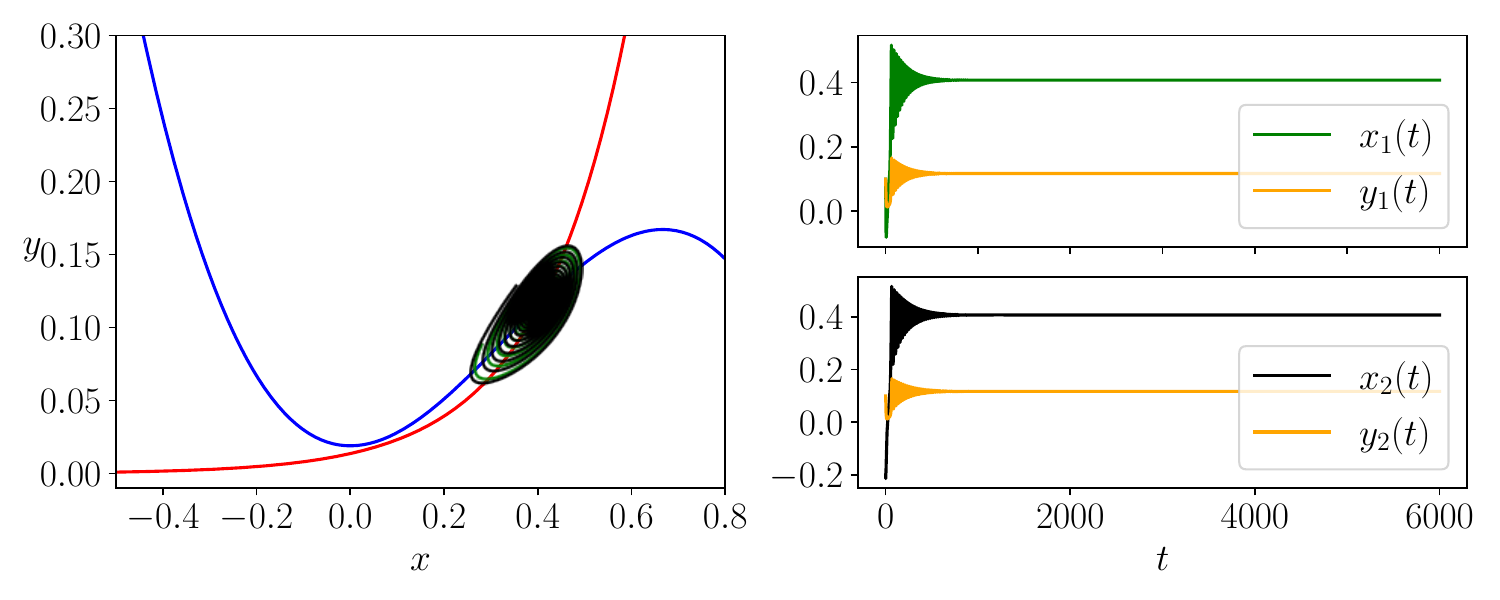} &  \includegraphics[scale=0.25]{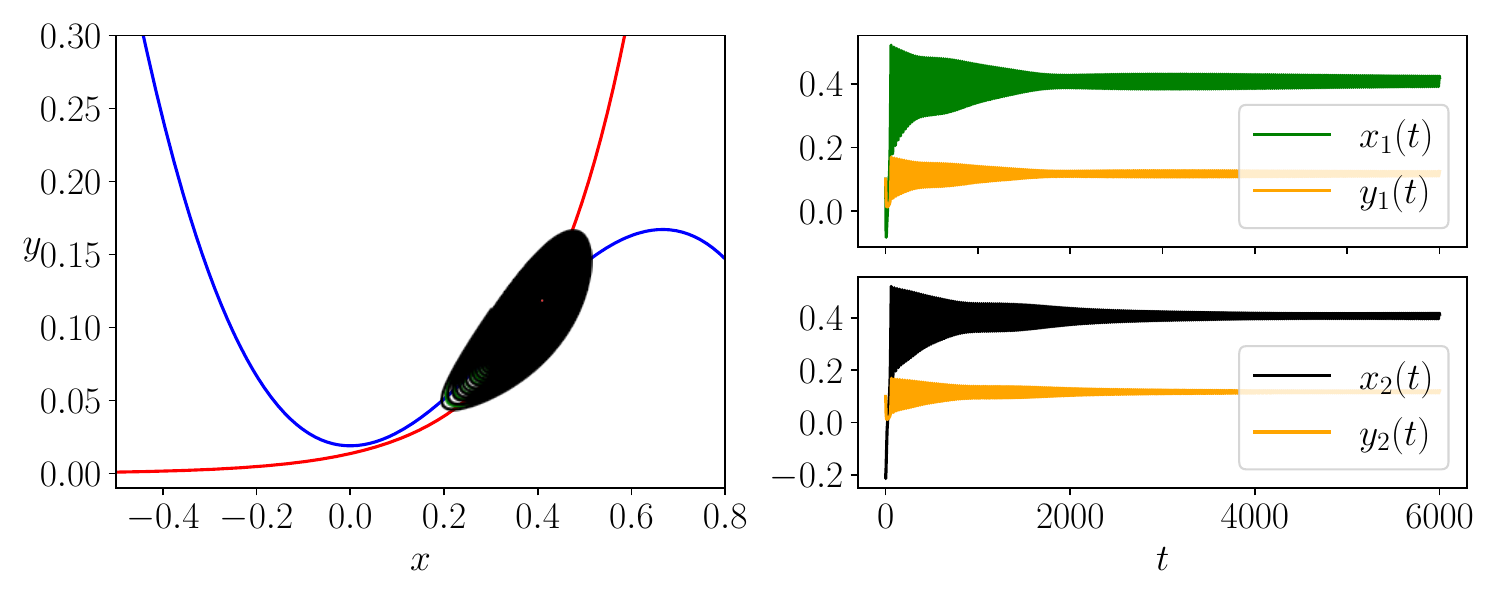} \\
(c) $\beta = 0.97$ &  (d) $\beta = \beta^*$\\[3pt]
\includegraphics[scale=0.25]{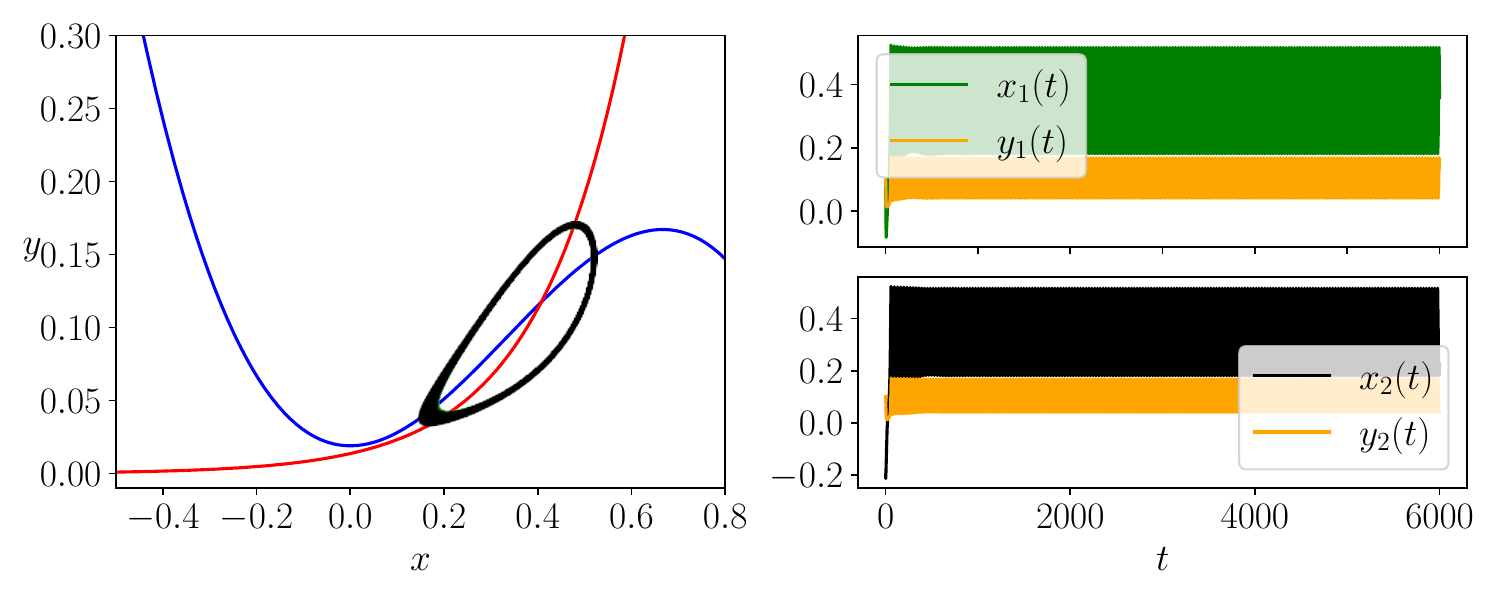} &  \includegraphics[scale=0.25]{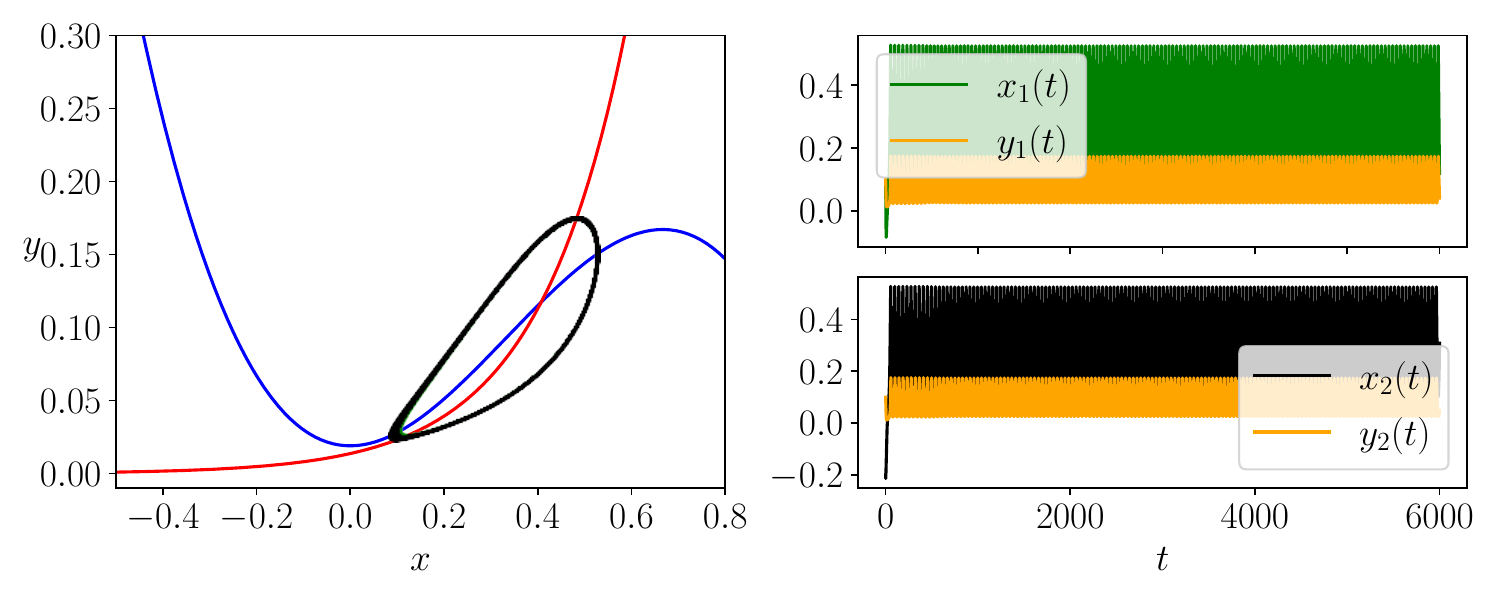} \\
(e) $\beta = 0.99$ & (f) $\beta = 1$ \\[3pt]
\end{tabular}
\end{center}
\caption{Phase portraits and time series with varying $\beta$ for~\eqref{eq:DML_4D} with a coupling strength $\theta = 0.001$. We have set $I = 0.019>I_{\max}$ similar to what we have done for the two-dimensional system. The other local parameters for both the neurons are set as~\eqref{eq:param}. We will have $\beta^* \approx 0.98233$ similar to what has been reported for~\eqref{eq:DML_2D}. The coupled system~\eqref{eq:DML_4D} converges to a stable symmetric equilibrium $(x^*, y^*, x^*, y^*)$ when $\beta < \beta ^*$. When $\beta \ge \beta^*$, the equilibrium loses stability through a Hopf bifurcation and a stable limit cycle appears for both neurons.}
\label{fig:pp4DB}
\end{figure*}

Next, we plot the bifurcation diagrams of both the voltage variables $x_1(t), x_2(t)$ with varying $\beta$ for two different cases of $\theta$: (a) $\theta = 0.008$, (b) $\theta = 0.001$ in Fig.~\ref{fig:BIF4D}. As proved earlier, the value of $\beta^*$ is independent of $\theta$ as the equation for solving the symmetric equilibrium point of~\eqref{eq:DML_4D} is independent of $\theta$. This essentially gives us identical results to what we have obtained for the single-cell case~\eqref{eq:DML_2D}. For both the cases we have Hopf bifurcation occurring at $\beta^* \approx 0.98233$. Then in Fig.~\ref{fig:Hopf4D} we show the stable and unstable regions of the symmetric equilibrium point $(x^*, y^*, x^*, y^*)$ on the $(I, \beta)$-plane with $I$ varied in the range $[0.016, 0.03]$. As expected, this diagram is identical to Fig.~\ref{fig:Hopf2D} pertaining to the fact that the equation to solve for $x^*$ for~\eqref{eq:DML_4D} reduces to that of~\eqref{eq:DML_2D} (independent of $\theta$), with the fact that the formulas of $\beta^*$ are exactly the same.

\begin{figure}[h]
\begin{center}
\begin{tabular}{c}
  \includegraphics[scale=0.4]{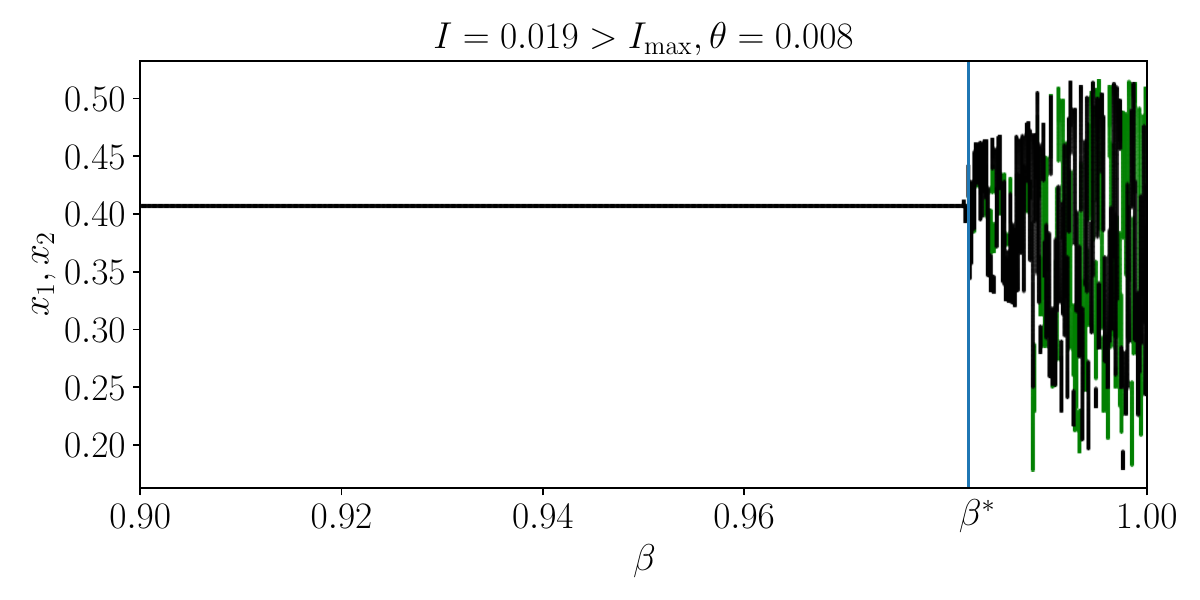} \\
(a) $\beta^* \approx 0.98233$ \\[3pt]
\includegraphics[scale=0.4]{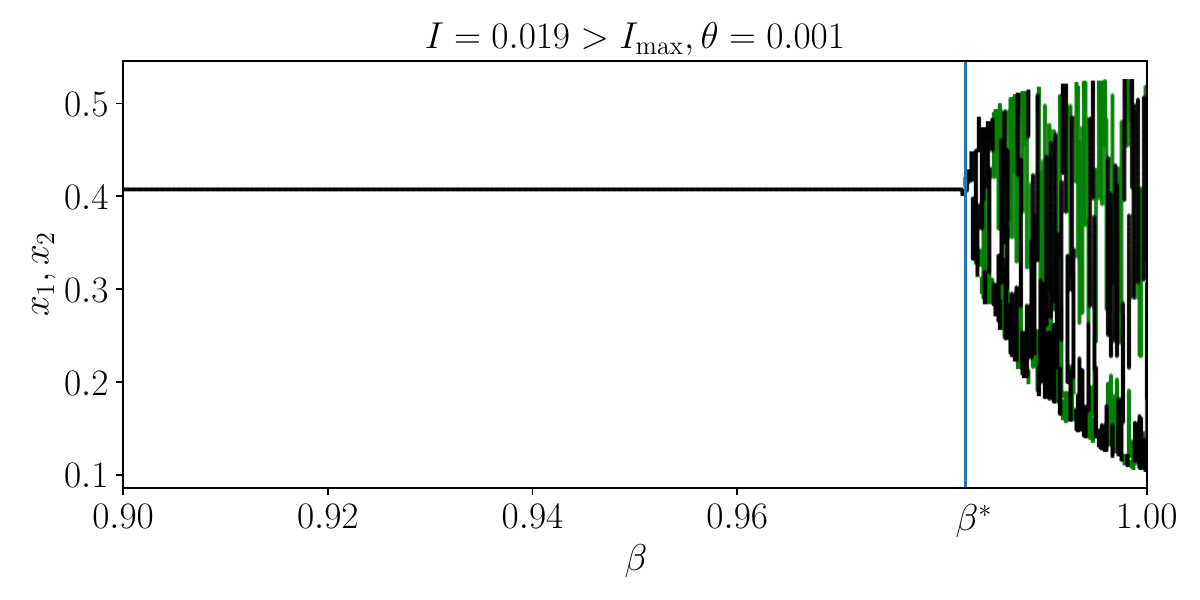} \\
(b) $\beta^* \approx 0.98233$ \\[3pt]
\end{tabular}
\end{center}
\caption{Bifurcation diagrams of $x_1(t), x_2(t)$ with varying $\beta$ for~\eqref{eq:DML_4D} at $I = 0.019 > I_{\rm max}$ with fixed $\theta$. Specifically, we have (a) $\theta=0.008$ whose corresponding phase portraits are displayed in Fig.~\ref{fig:pp4D} and (b) $\theta = 0.001$ whose corresponding phase portraits are displayed in Fig.~\ref{fig:pp4DB}. Local parameters for both neurons are set following~\eqref{eq:param}. Variable $x_1(t)$ is plotted in green and $x_2(t)$ in black. The vertical blue lines indicate $\beta\approx 0.98233$ for each of the cases where a Hopf bifurcation occurs.}
\label{fig:BIF4D}
\end{figure}

\begin{figure}[h]
    \centering
    \includegraphics[width=0.5\linewidth]{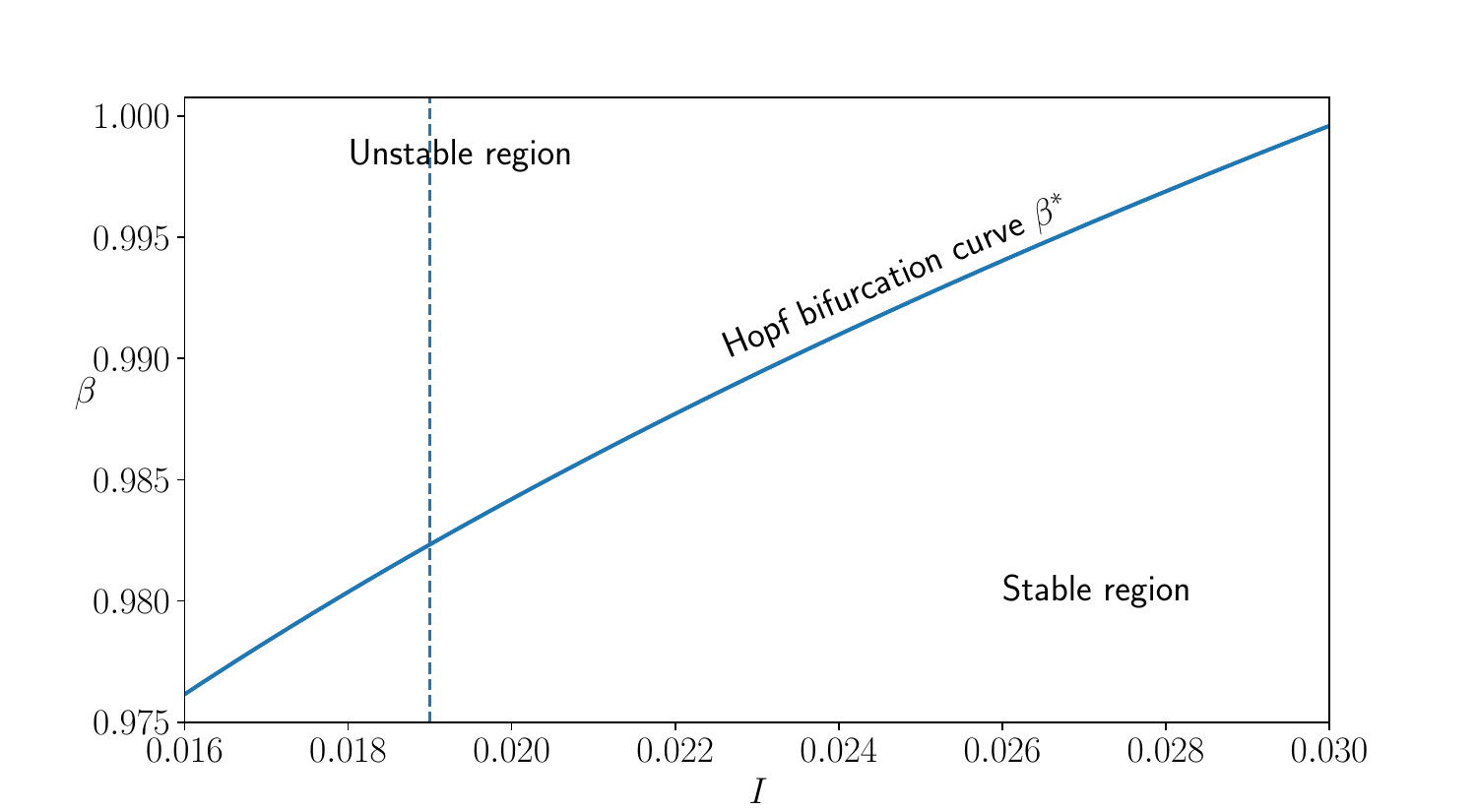}
    \caption{Hopf bifurcation curve $\beta^*$ for~\eqref{eq:DML_4D} separating the stable and the unstable regions of the symmetric equilibrium point $(x^*, y^*, x^*, y^*)$ on the $(I, \beta)$-plane. The vertical broken line indicates $I = 0.019$. The system undergoes a Hopf bifurcation when $\beta$ crosses the curve from the bottom for a particular $I$ value. Other parameter values are set as~\eqref{eq:param}. Note that this curve is independent of the coupling strength $\theta$ and is identical to Fig.~\ref{fig:Hopf2D}.}
    \label{fig:Hopf4D}
\end{figure}

We next move on to employing the coupling scheme modeled by the sigmoidal function that acts as a bidirecational synaptic connection between the two neurons, see~\eqref{eq:dML_4D_chemical}. Here we briefly discuss the qualitative properties of~\eqref{eq:dML_4D_chemical} that will closely follow what has been done for~\eqref{eq:DML_4D}. We again look into the stability analysis of a symmetric equilibrium point of~\eqref{eq:dML_4D_chemical}. An equilibrium point of the system~\eqref{eq:dML_4D_chemical} can be computed by solving for $x_1, x_2$ from the equations
\begin{align}
\label{eq:eqPoints4DChem}
g(x_1, x_2) = 0, \qquad g(x_2, x_1) = 0,
\end{align}
where
$$
g(x_i, x_j) = x_i^2(1-x_i) - \frac{Ae^{\alpha x_i}}{\gamma} + I + \sigma \frac{v_s - x_i}{1+e^{-\lambda(x_j - q)}}.
$$
Once $x^*$ is computed, then we will have $y^* = \frac{Ae^{\alpha x^*}}{\gamma}$. The Jacobian matrix of~\eqref{eq:dML_4D_chemical} at $(x^*, y^*, x^*, y^*)$ is again a block matrix given by
$$
J = \begin{bmatrix}
    J_2 & \Sigma \\ \Sigma & J_2
\end{bmatrix}, 
$$
where $J_2 = \begin{bmatrix}
    x^*(2-3x^*) - \frac{\sigma}{1+e^{-\lambda(x^* - q)}} & -1 \\
    \alpha Ae^{\alpha x^*} & -\gamma
\end{bmatrix}$, and $\Sigma$ is the coupling matrix given by $\Sigma = \begin{bmatrix}
    \sigma(v_s - x^*)\times R(x^*) & 0 \\ 0 & 0
\end{bmatrix}$. Note that 
$$
R(x) = \frac{d}{dx} \bigg[\frac{1}{(1+e^{-\lambda(x-q)})} \bigg] = \frac{\lambda e^{-\lambda(x-q)}}{\big(1+e^{-\lambda(x-q)}\big)^2}.
$$
We will have
$$
J_2 \pm \Sigma = \begin{bmatrix}
  S^{\pm}(x^*)  & -1 \\
    \alpha Ae^{\alpha x^*} & -\gamma  
\end{bmatrix},
$$ with their traces and determinants given by
\begin{equation}
    \label{eq:traceDet4DChemical}
    \begin{aligned}
    \tau^\pm(x^*) &= S^{\pm}(x^*) - \gamma, \\
    \delta^\pm(x^*) &= -\gamma S^\pm(x^*) + \alpha Ae^{\alpha x^*},
\end{aligned}
\end{equation}
with
$$
S^{\pm}(x^*) = x^*(2-3x^*) - \frac{\sigma}{1+e^{-\lambda(x^* - q)}} \pm \sigma(v_s - x^*)\times R(x^*).
$$
We utilise a similar approach to what was done for the dimer coupled using the first coupling scheme, for the stability analysis of~\eqref{eq:dML_4D_chemical}. Note that for the special case where the coupling strength $\sigma = 0$, the dynamics of each neuron is governed by~\eqref{eq:DML_2D}. Also the formulas of traces and determinants~\eqref{eq:traceDet4DChemical} reduce to~\eqref{eq:trace2D}. An equilibrium point $(x^*, y^*, x^*, y^*)$ of~\eqref{eq:dML_4D_chemical} is asymptotically stable if and only if
\begin{itemize}
    \item[i)] $S^{\pm}(x^*) < \frac{\alpha A e^{\alpha x^*}}{\gamma}$, and
    \item[ii)] $S^{\pm}(x^*) < 2\sqrt{\alpha Ae^{\alpha x^*} - \gamma S^{\pm}(x^*)} \cos(\frac{\beta \pi}{2})$.
\end{itemize}
If $S^+(x^*)>\frac{\alpha Ae^{\alpha x^*}}{\gamma}$ or $S^-(x^*)> \frac{\alpha Ae^{\alpha x^*}}{\gamma}$, then the equilibrium point is a saddle. A saddle-node bifurcation occurs when $S^{\pm}(x^*) = \frac{\alpha Ae^{\alpha x^*}}{\gamma}$. However, when $S^{\pm}(x^*)<\frac{\alpha Ae^{\alpha x^*}}{\gamma}$, the stability of the symmetric equilibrium depends on two different cases:
\begin{itemize}
    \item[i)] If $S^{\pm}(x^*)<\gamma$, then the equilibrium is asymptotically stable irrespective of $\beta \in (0, 1]$.
    \item[ii)] Otherwise, the equilibrium is asymptotically stable if and only if $\beta <\beta^*$ where
    \begin{align}
    \label{eq:betaStar4DChem}
        \beta^* = \min\bigg[\frac{2}{\pi}\cos^{-1}\bigg(\min\bigg(1, \frac{S^-(x^*)-\gamma}{2\sqrt{-\gamma S^-(x^*) + \alpha A e^{\alpha x^*}}} \bigg) \bigg), \nonumber\\  \frac{2}{\pi}\cos^{-1}\bigg(\min\bigg(1, \frac{S^+(x^*)-\gamma}{2\sqrt{-\gamma S^+(x^*) + \alpha A e^{\alpha x^*}}} \bigg) \bigg)\bigg].
    \end{align}
\end{itemize}
Again $\beta^*$ is a Hopf bifurcation for~\eqref{eq:dML_4D_chemical} and the formula~\eqref{eq:betaStar4DChem} reduces to~\eqref{eq:betaStar} if and only if $\sigma = 0$. For the numerical simulations, we set the initial conditions again as $(x_1(0), y_1(0)) = (0.1, 0.1)$, and $(x_2(0), y_2(0)) =(-0.2, 0.1)$. The local parameters that govern the dynamics of each neuron are set according to~\eqref{eq:param}, again indicating identical neurons. Function \texttt{F} for \texttt{FDEsolver()} is the right hand side of~\eqref{eq:dML_4D_chemical}, \texttt{tSpan}$=[0, 6000]$, and $h = 0.01$. We again set $I = 0.019$. Similar to the first coupling scheme we treat $\beta$ and $\sigma$ as the primary bifurcation parameters to study the firing patterns. The reversal potential is set as $v_s = 2$ ensuring the synapse is excitatory at all time. The parameters for the sigmoidal function $\zeta(x) = \frac{1}{1+e^{-\lambda(x-q)}}$ are set as $\lambda=10$ (slope) and $q = -0.25$ (synaptic threshold), following the parameters set in Belykh {\em et al.}~\citep{BeDe05}.

The phase portraits and time series are portrayed in Fig.~\ref{fig:pp4D} and~\ref{fig:pp4DB} for two different $\sigma = 0.001, 0.0001$. In both simulations, we notice that the synapse is always excitatory because $v_s=2>x_j(t)$ for all $t$ and $j = 1, 2$ (both neurons). The simulation techniques are kept exactly the same as for the first coupling scheme. Solving for a symmetric equilibrium point for~\eqref{eq:dML_4D_chemical} from~\eqref{eq:eqPoints4DChem} reduces to solving for $x^*$ from
\begin{align}
\label{eq:eqPointSoln4DChem}
{x^*}^2(1-x^*) - \frac{Ae^{\alpha x^*}}{\gamma} + I + \sigma \frac{v_s - x^*}{1+e^{-\lambda(x^* - q)}} = 0,
\end{align}
which is dependent on the coupling strength $\sigma$. For $I = 0.019$, we will have $x^* \approx 0.41279$ and $x^* \approx 40824$ for $\sigma = 0.001$ and $0.0001$ respectively. The corresponding $\beta^*$ values will then be $\beta^*\approx 0.98628$ and $\beta^* \approx 0.98274$ respectively. In both simulations for the different $\sigma$ values, we notice that the dynamics of~\eqref{eq:dML_4D_chemical} always converges asymptotically to a symmetric stable fixed point for $\beta < \beta^*$. As expected a Hopf bifurcation occurs at $\beta = \beta^*$, and the time series displays tonic spiking with increasing amplitudes as $\beta$ is further increased beyond $\beta^*$. In that case, we see the appearance of stable limit cycles in the phase portraits for both neurons and the symmetric equilibrium loses stability.

\begin{figure*}
\begin{center}
\begin{tabular}{cc}
  \includegraphics[scale=0.25]{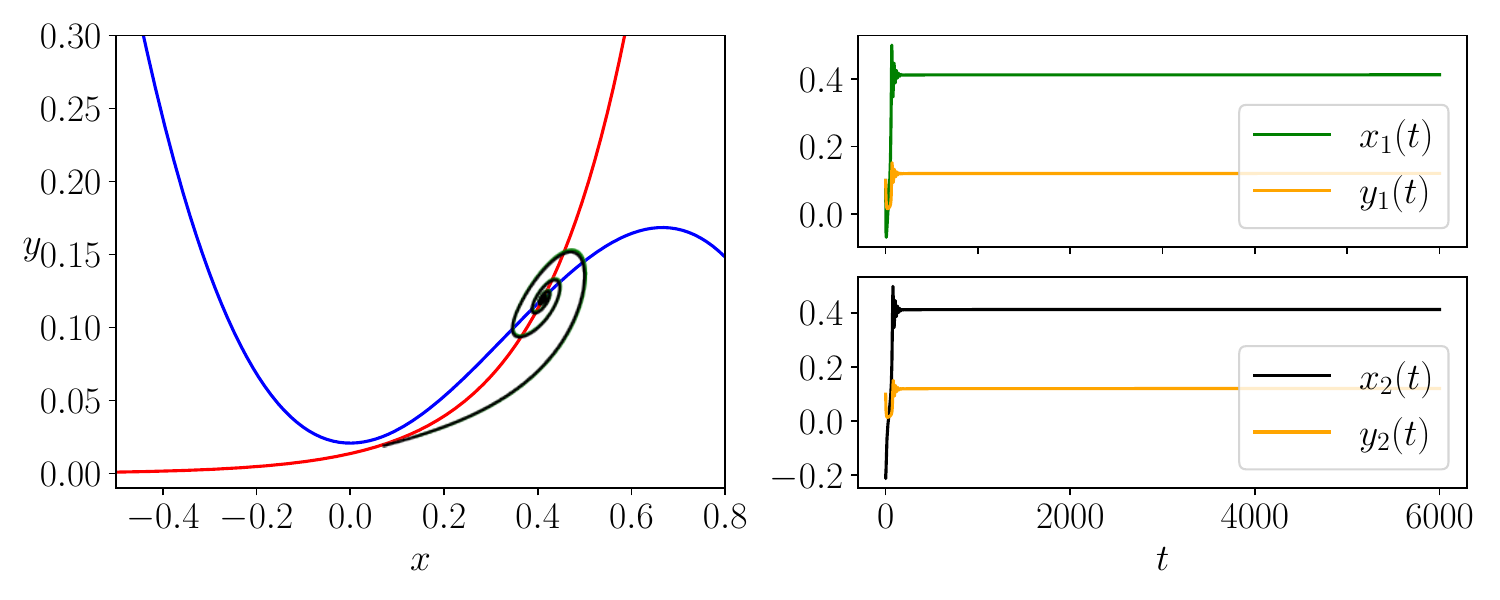} &  \includegraphics[scale=0.25]{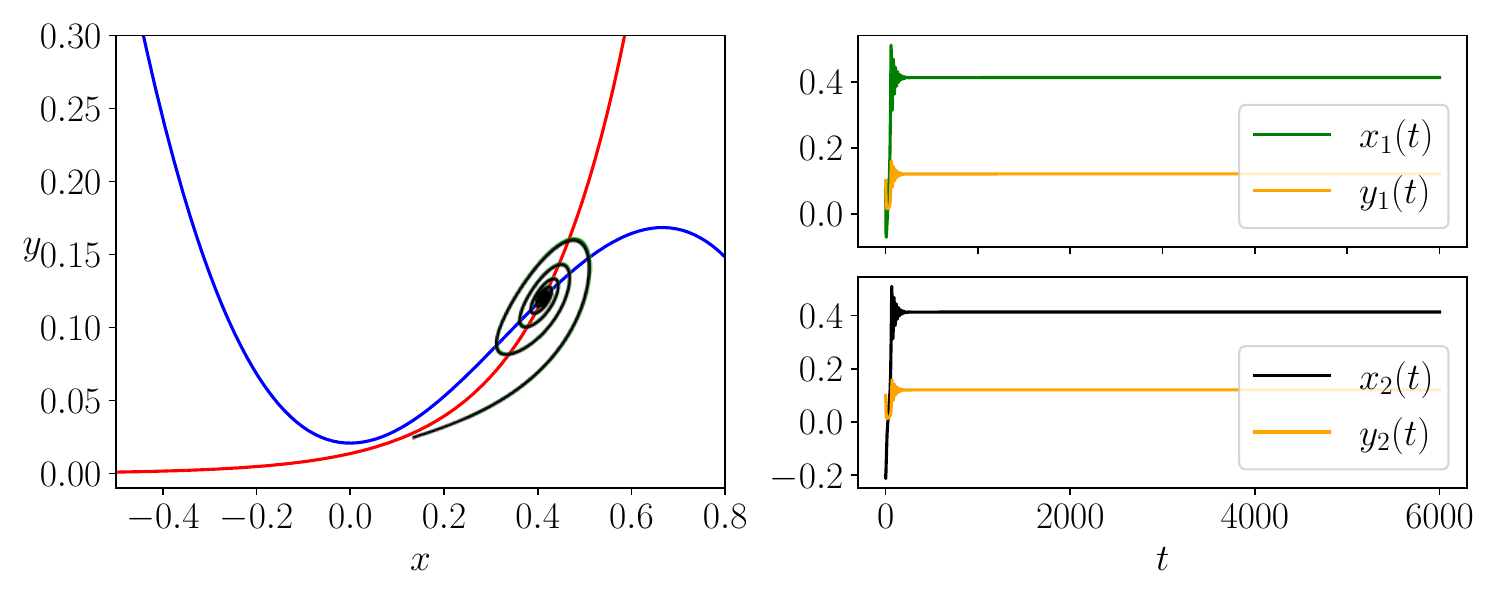} \\
(a) $\beta = 0.9$ & (b) $\beta = 0.93$ \\[3pt]
\includegraphics[scale=0.25]{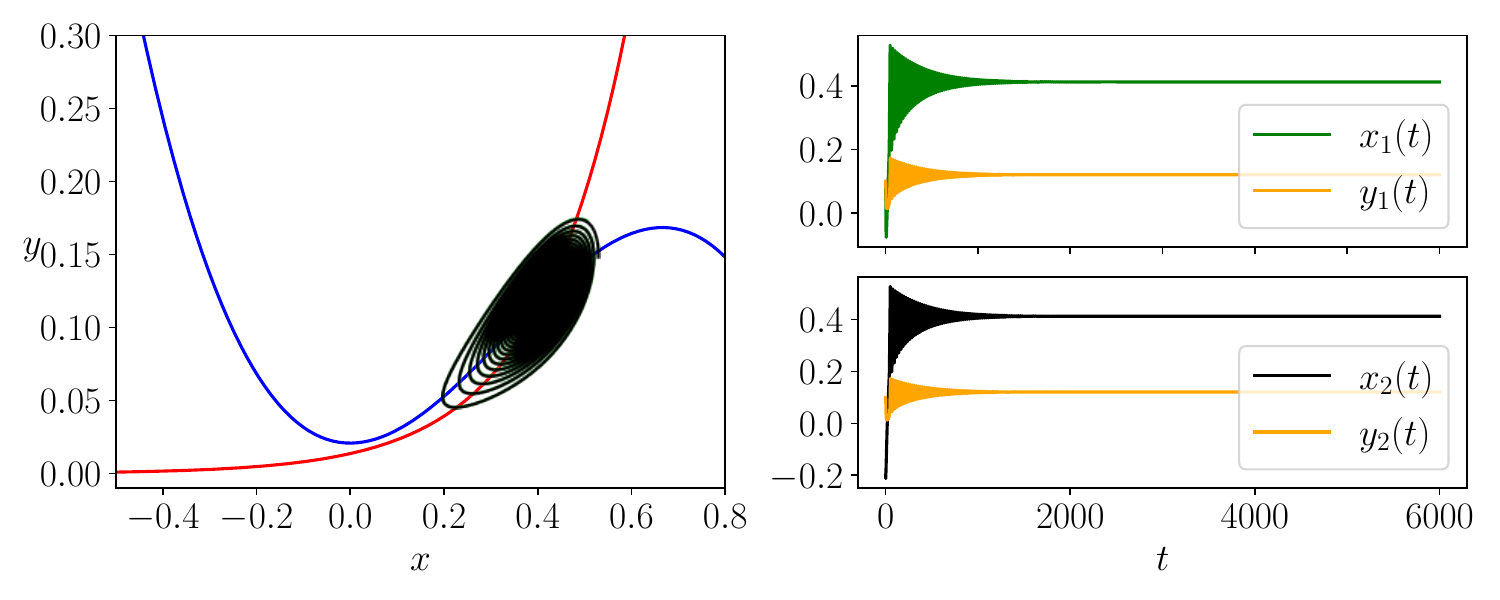} &  \includegraphics[scale=0.25]{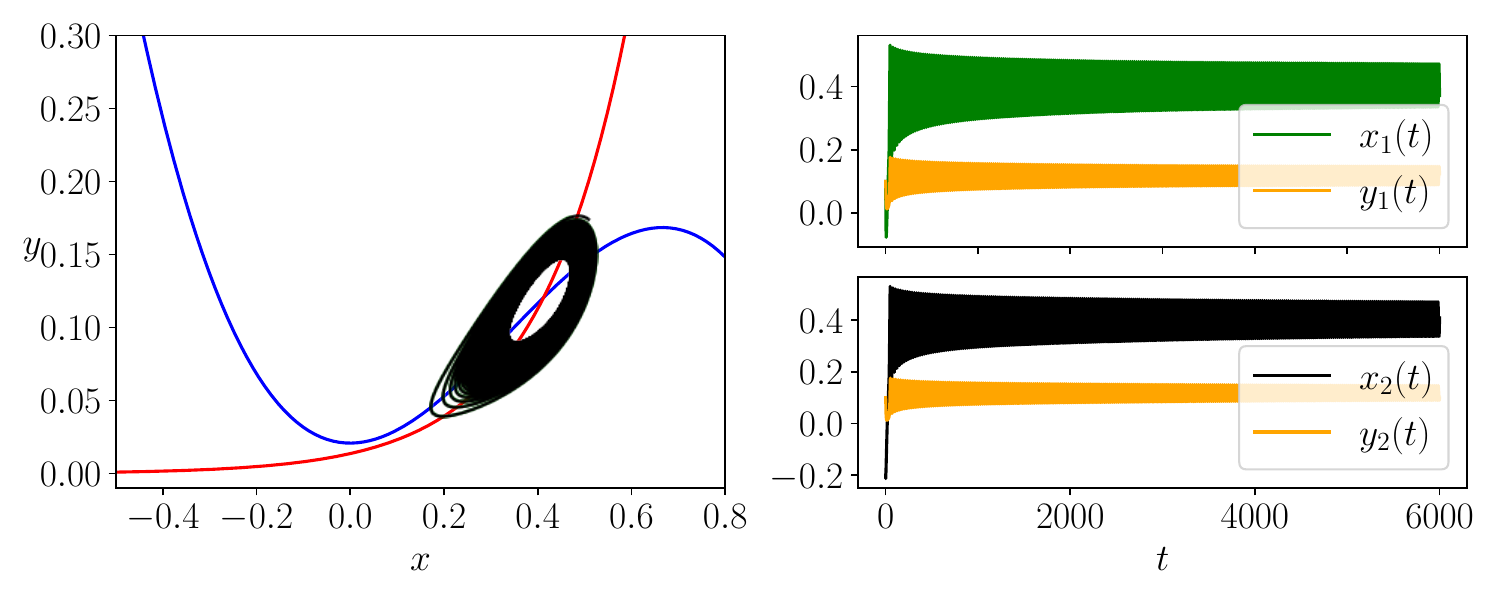} \\
(c) $\beta = 0.98$ &  (d) $\beta = \beta^*$\\[3pt]
\includegraphics[scale=0.25]{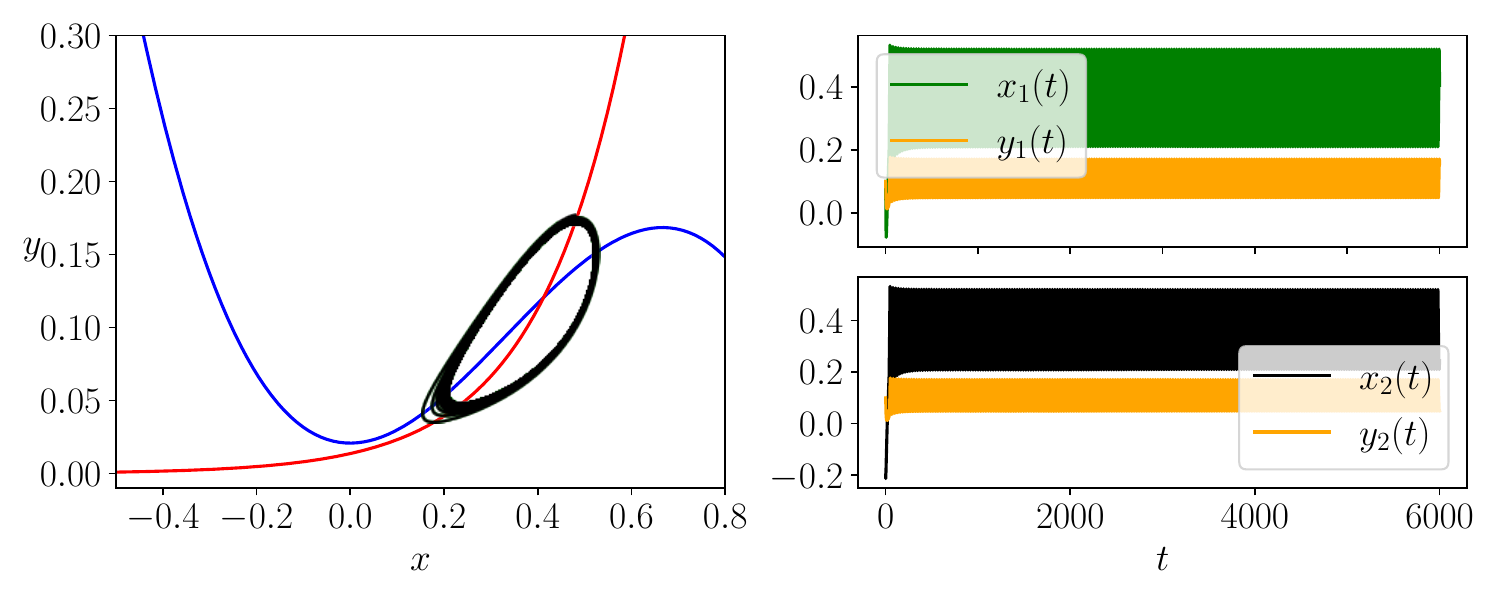} &  \includegraphics[scale=0.25]{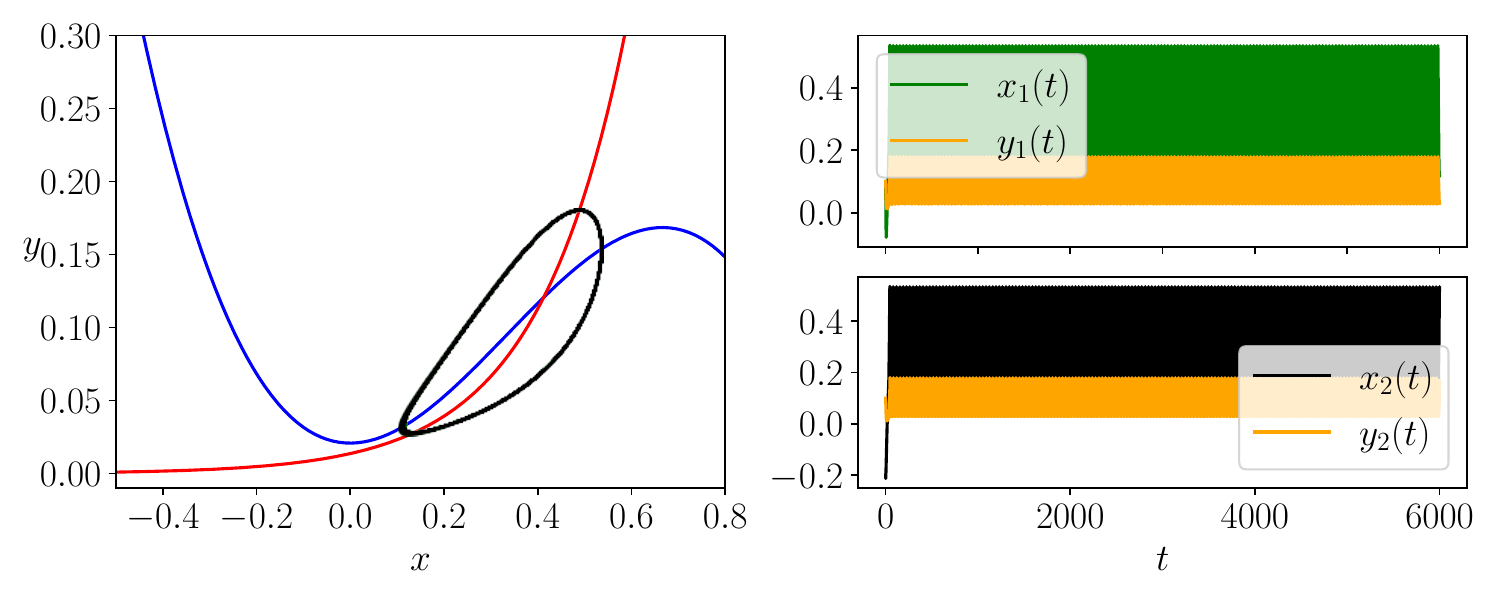} \\
(e) $\beta = 0.99$ & (f) $\beta = 1$ \\[3pt]
\end{tabular}
\end{center}
\caption{Phase portraits and time series with varying $\beta$ for~\eqref{eq:dML_4D_chemical} with a coupling strength $\sigma = 0.001$. We have set $I = 0.019>I_{\max}$ similar to what we have done for the two-dimensional system. The other local parameters for both the neurons are set as~\eqref{eq:param} Furthermore, we have $v_s=2$, $\lambda = 10$, and $q = -0.25$. We will have $\beta^* \approx 0.98628$. The coupled system~\eqref{eq:dML_4D_chemical} converges to a stable symmetric equilibrium $(x^*, y^*, x^*, y^*)$ when $\beta < \beta ^*$. When $\beta \ge \beta^*$, the equilibrium loses stability through a Hopf bifurcation and a stable limit cycle appears for both neurons.}
\label{fig:pp4DChem}
\end{figure*}

\begin{figure*}
\begin{center}
\begin{tabular}{cc}
  \includegraphics[scale=0.25]{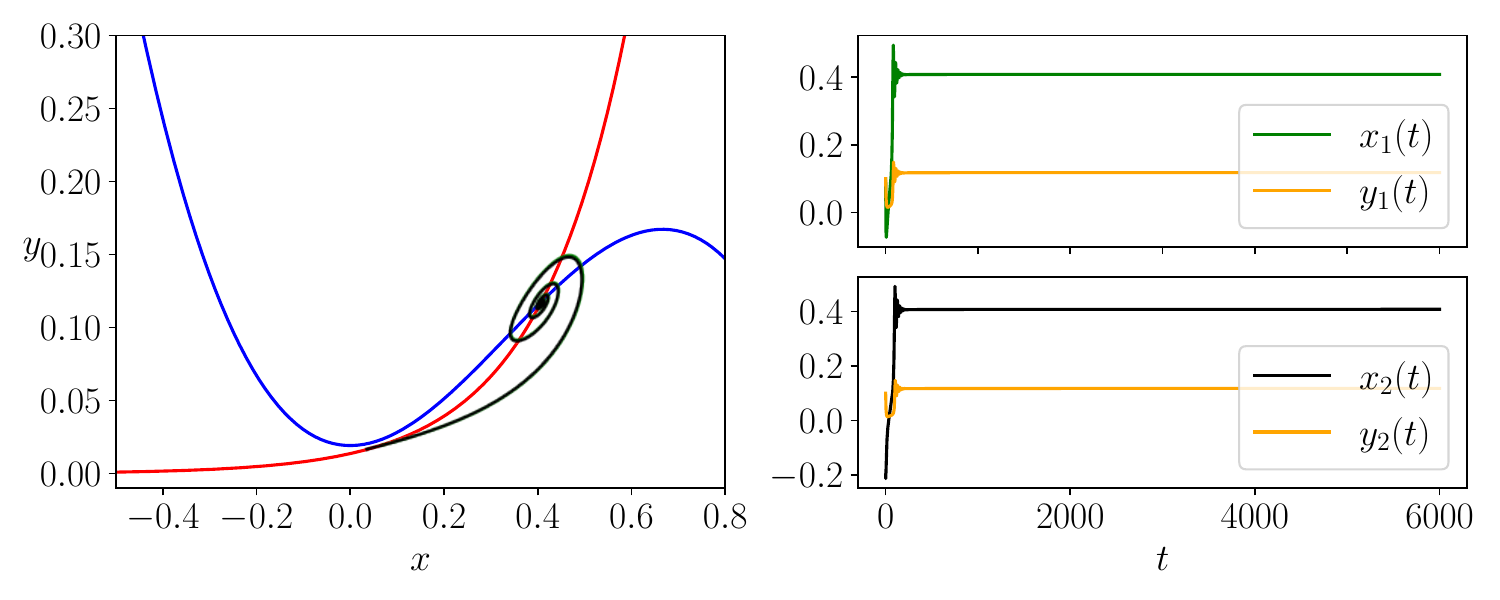} &  \includegraphics[scale=0.25]{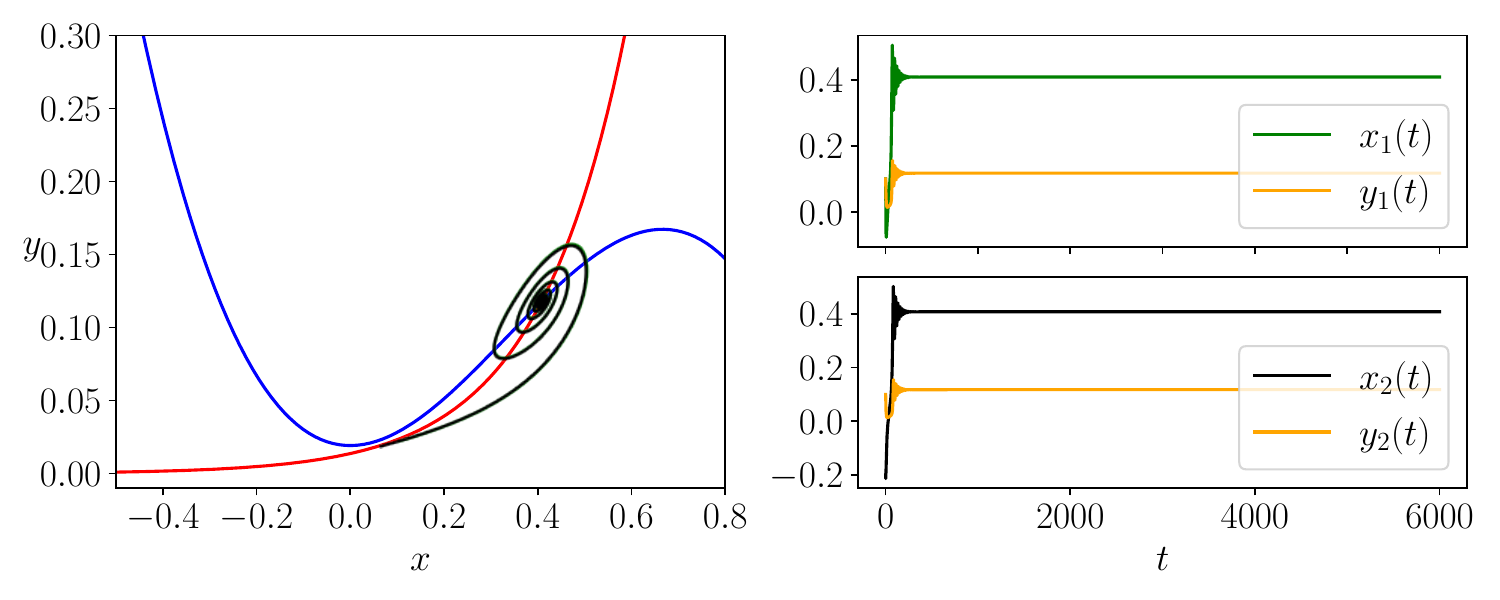} \\
(a) $\beta = 0.9$ & (b) $\beta = 0.93$ \\[3pt]
\includegraphics[scale=0.25]{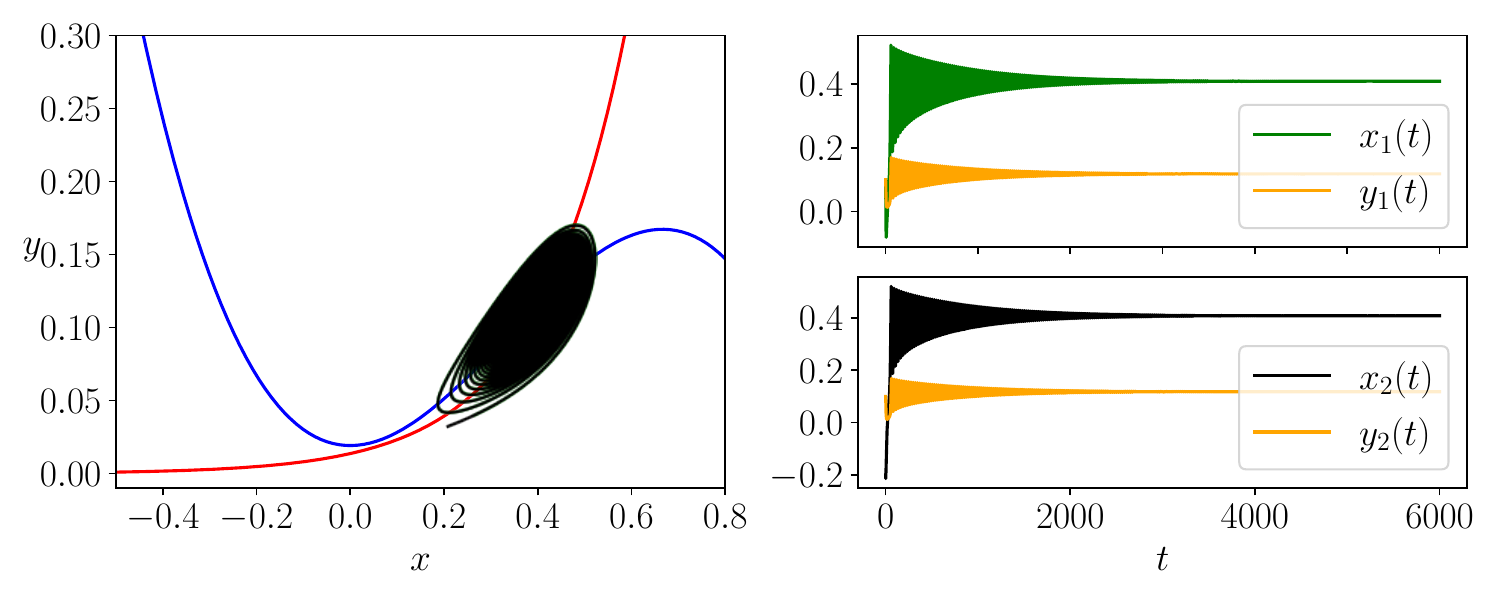} &  \includegraphics[scale=0.25]{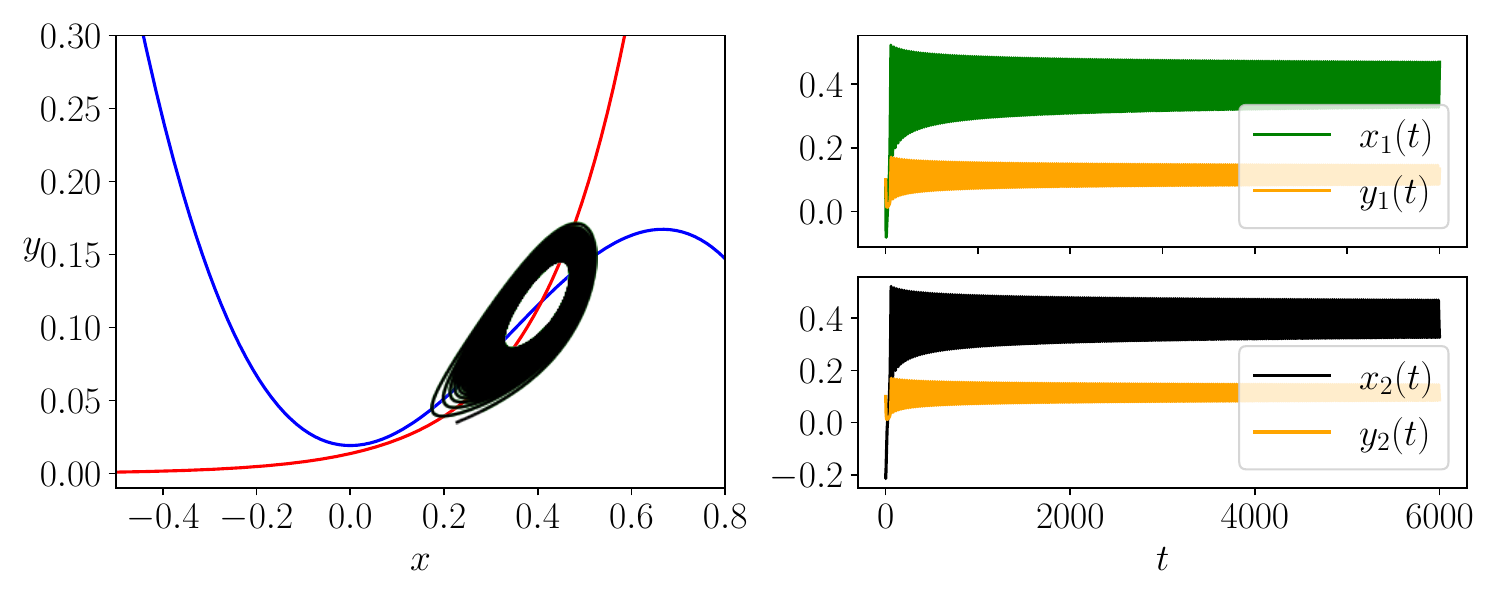} \\
(c) $\beta = 0.98$ &  (d) $\beta = \beta^*$\\[3pt]
\includegraphics[scale=0.25]{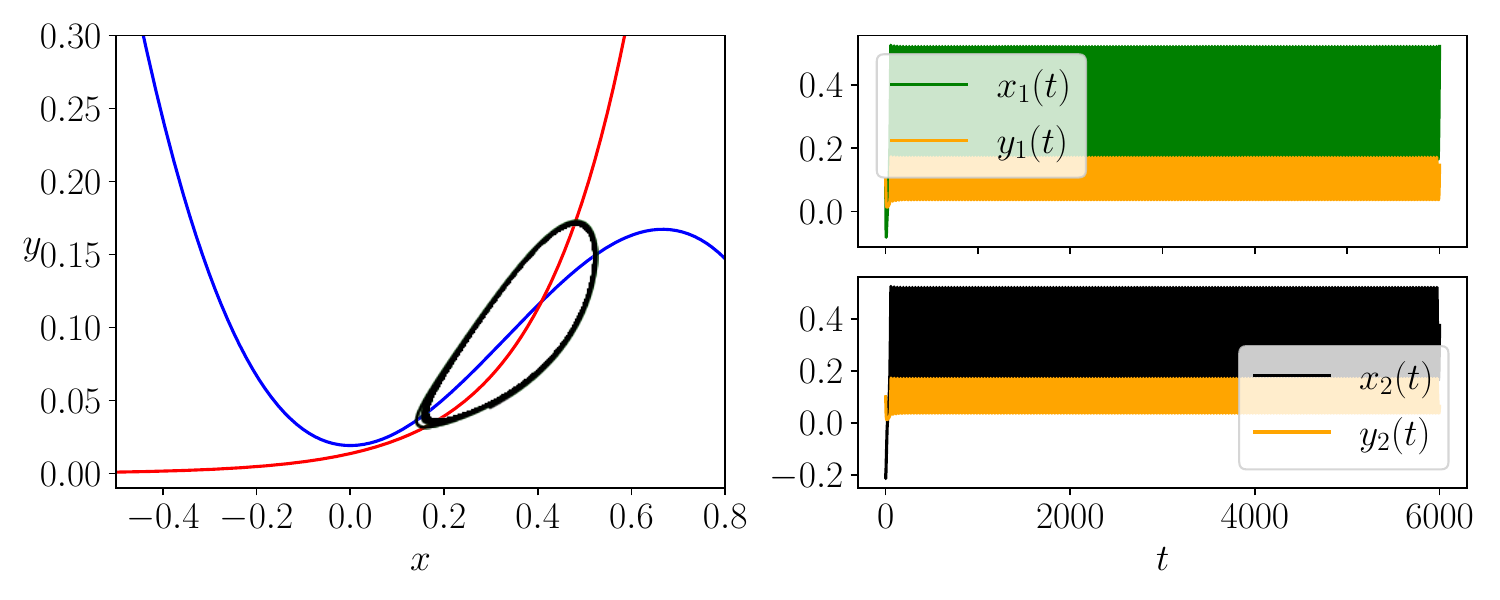} &  \includegraphics[scale=0.25]{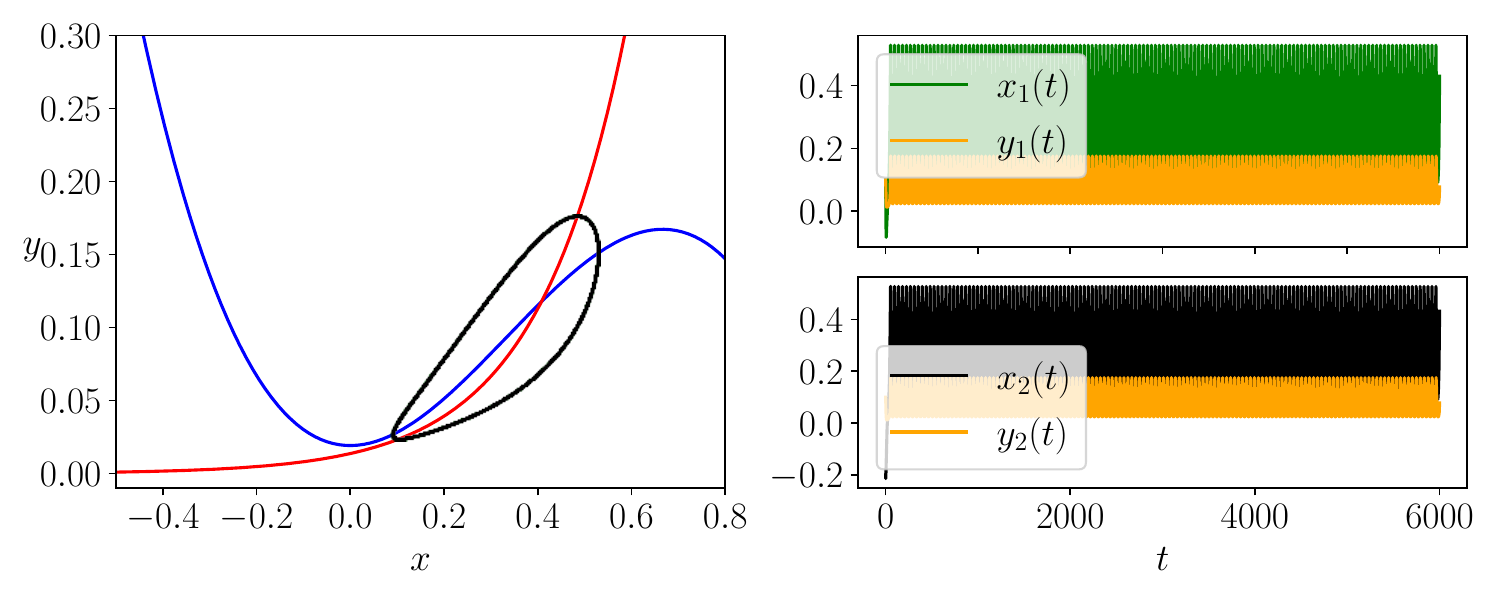} \\
(e) $\beta = 0.99$ & (f) $\beta = 1$ \\[3pt]
\end{tabular}
\end{center}
\caption{Phase portraits and time series with varying $\beta$ for~\eqref{eq:dML_4D_chemical} with a coupling strength $\sigma = 0.0001$. We have set $I = 0.019>I_{\max}$ similar to what we have done for the two-dimensional system. The other local parameters for both the neurons are set as~\eqref{eq:param} Furthermore, we have $v_s=2$, $\lambda = 10$, and $q = -0.25$. We will have $\beta^* \approx 0.98274$. The coupled system~\eqref{eq:dML_4D_chemical} converges to a stable symmetric equilibrium $(x^*, y^*, x^*, y^*)$ when $\beta < \beta ^*$. When $\beta \ge \beta^*$, the equilibrium loses stability through a Hopf bifurcation and a stable limit cycle appears for both neurons.}
\label{fig:pp4DChemB}
\end{figure*}

The bifurcation diagrams for $x_1(t), x_2(t)$ with varying $\beta$ are displayed in Fig.~\ref{fig:BIF4DChem} for two different cases of $\sigma$: (a) $\sigma = 0.001$ and $\sigma = 0.0001$. In each case $\beta^*$ is a function of $\sigma$ because computation of $x^*$ depends on $\sigma$. Thus we have Hopf bifurcation occurring at two different values of $\beta$ for two different $\sigma$'s for the same $I = 0.019$. Lastly, we show the Hopf bifurcation curves on the $(I, \beta)$-plane with $I$ varied in the range $[0.016, 0.0235]$ in Fig.~\ref{fig:Hopf4DChem}. Five different Hopf curves are displayed for values $\sigma = 0, 0.0001, 0.0005, 0.001, 0.003$. Not that for the special case when $\sigma = 0$, the Hopf curve is identical to Fig.~\ref{fig:Hopf2D}. As $\sigma$ keeps increasing, the Hopf curve keeps shifting up with more subregion of the $(I, \beta)$-plane opening up where the symmetric equilibrium point is stable.

\begin{figure}[h]
\begin{center}
\begin{tabular}{c}
  \includegraphics[scale=0.4]{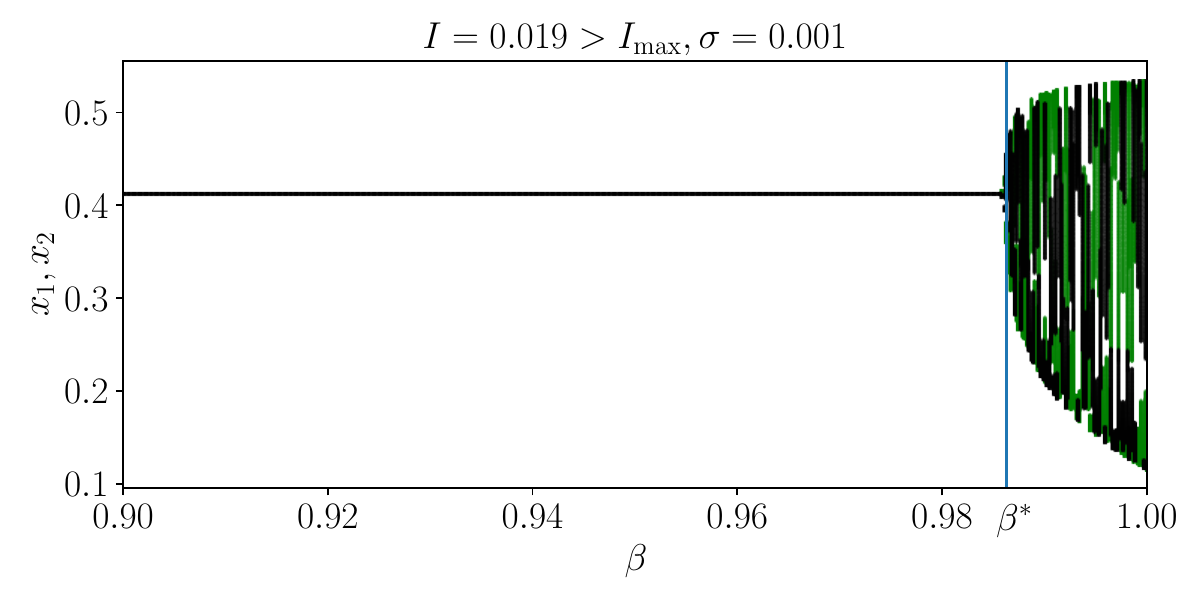} \\
(a) $\beta^* \approx 0.98628$ \\[3pt]
\includegraphics[scale=0.4]{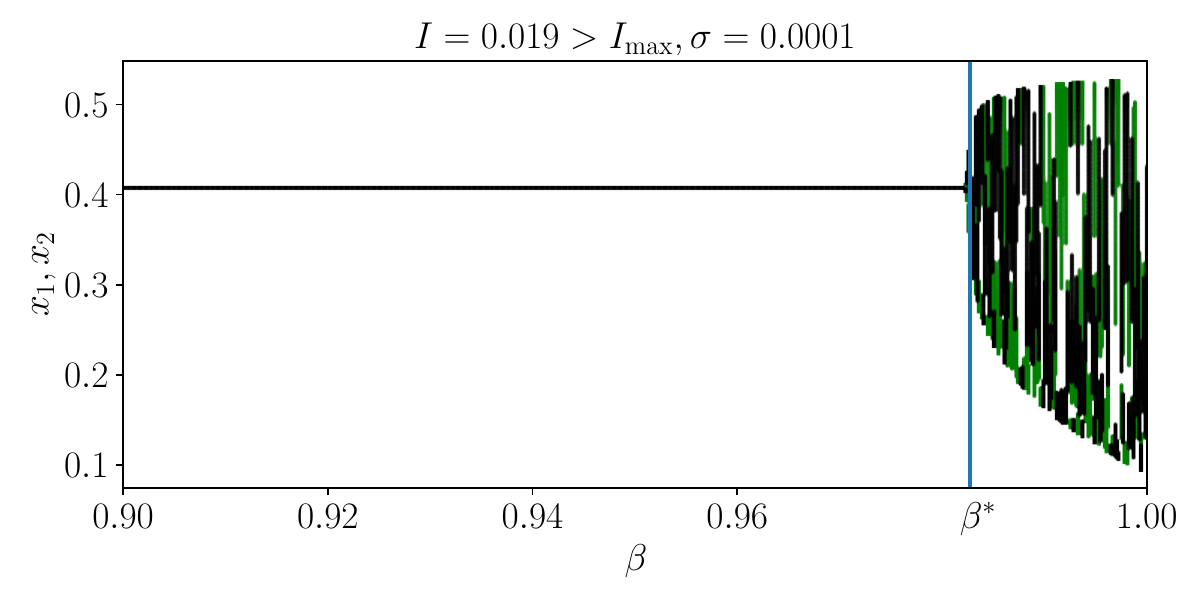} \\
(b) $\beta^* \approx 0.98274$ \\[3pt]
\end{tabular}
\end{center}
\caption{Bifurcation diagrams of $x_1(t), x_2(t)$ with varying $\beta$ for~\eqref{eq:DML_4D} at $I = 0.019 > I_{\rm max}$ with fixed $\sigma$. Specifically, we have (a) $\sigma=0.001$ whose corresponding phase portraits are displayed in Fig.~\ref{fig:pp4DChem} and (b) $\sigma = 0.0001$ whose corresponding phase portraits are displayed in Fig.~\ref{fig:pp4DChemB}. Local parameters for both neurons are set following~\eqref{eq:param} along with $v_S =2$, $\lambda = 10$, and $q=-0.25$. Variable $x_1(t)$ is plotted in green and $x_2(t)$ in black. The vertical blue lines indicate (a) $\beta^* \approx 0.98628$ for $\sigma = 0.001$ and (b) $\beta^* \approx 0.98274$ for $\sigma = 0.0001$ indicating Hopf bifurcations.}
\label{fig:BIF4DChem}
\end{figure}


\begin{figure}
    \centering
    \includegraphics[width=0.5\linewidth]{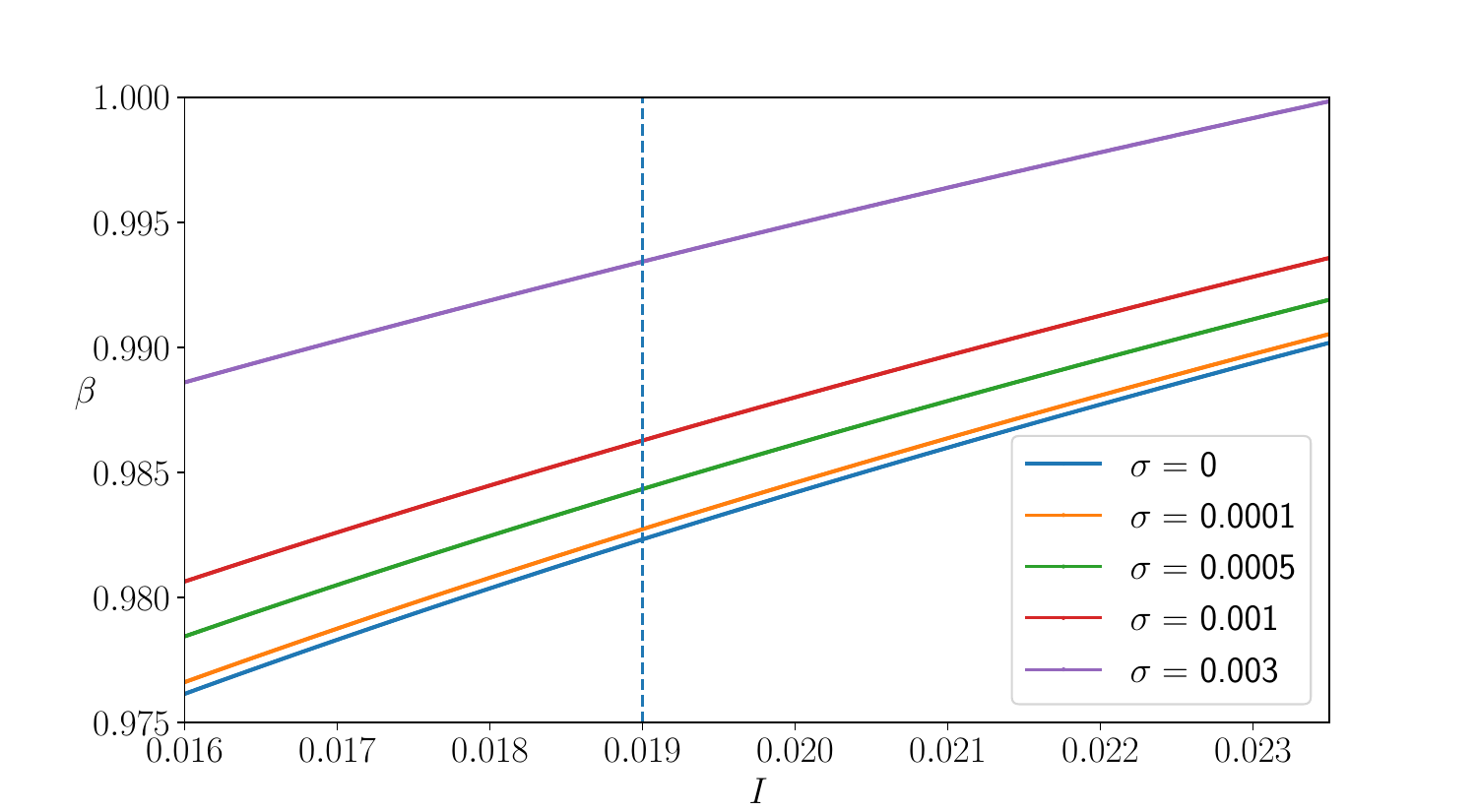}
    \caption{Hopf bifurcation curve $\beta^*$ for~\eqref{eq:dML_4D_chemical} separating the stable and the unstable regions of the symmetric equilibrium point $(x^*, y^*, x^*, y^*)$ on the $(I, \beta)$-plane. The vertical broken line indicates $I = 0.019$. Five different curves for different $\sigma$ values are displayed. The system undergoes a Hopf bifurcation when $\beta$ crosses the curve from the bottom for a particular $I$ value. Other parameter values are set as~\eqref{eq:param} along with $v_s=2$, $\lambda = 10$, and $q = -0.25$. With an increasing $\sigma$, the Hopf curve $\beta^*$ keeps shifting upwards with more stability region opening up for the symmetric equilibirum.}
    \label{fig:Hopf4DChem}
\end{figure}

\section{Conclusions}
\label{sec:Conclusions}
Reduced order models like the dML model~\eqref{eq:dML} provide a computationally efficient scenario to study the dynamics of excitable cells like neurons. The dML model is a simple two-dimensional neuron dynamics model that fully delivers the dynamics portrayed by the original conductance-based Morris-Lecar neurons. Moreover~\eqref{eq:DML_2D} demonstrates a topological similarity to a FitzHugh-Nagumo type neuron mode with the difference lying in the $y$-nullcilne ($y$-nullcline in dML neuron is exponential). After Fatoyinbo {\em et al.}~\citep{FaMu22} extensively explored~\eqref{eq:dML} in terms of codimension-one and -two bifurcation analysis, Ghosh {\em et al.}~\citep{GhFa24} recently performed an extensive analysis of a slow-fast three-dimensional dML model, also first introduced by Schaeffer and Cain in their book~\citep{ScCa18}. An important research question following this was ``How can one modify this simple reduced-order model to make it biophysically more meaningful?". One promising research approach is using fractional-order models of excitable cells that leads to a satisfactory answer. Fractional-order systems are biophysically more realistic than their standard integer-order counterparts, providing an efficient modeling of many real-world phenomena~\citep{AlBa16}. Kaslik~\citep{Ka17} says ``fractional-order derivatives are more precise in the description of dielectric processes and memory properties of membranes."

Motivated, we have explored the dynamics of single-cell and two-coupled fractional order dML neurons in this study. We have taken both a theoretical and a numerical approach to analyze the single-cell and the two-coupled Caputo-type fractional-order system of dML neurons. We considered two different coupling schemes: one just a linear flow and the other via a sigmoidal function. Qualitatively, we have established the necessary and sufficient conditions for an equilibrium point to be stable and for the appearance of a saddle-node bifurcation in terms of the system parameters. We also established the conditions for a Hopf bifurcation: when a stable equilibrium loses its stability alongside the appearance of a stable limit cycle as the order of the fractional derivative is varied. We usually notice tonic spiking after the dynamics transitions through a Hopf bifurcation. For the two-coupled system where the coupling is modeled by a linear flow~\eqref{eq:betaStar4D}, we also observe a typical firing behavior as $\beta$ approaches $1$ after the Hopf bifurcation. There also exist mixed-mode oscillations across the two neurons as $\beta$ is increased. We also make an important observation that for~\eqref{eq:betaStar4D} a Hopf bifurcation intrinsically depends on the magnitude of the coupling strength $\sigma$. For a higher coupling strength, the stability region for a symmetric equilibrium increases on a $(I, \beta)$-plane. The coupling strength $\sigma$ and the memory index various stability regimes. 

The Caputo derivative provides a lot of advantages in terms of modeling memory effects in models and also the fact that the Caputo derivative of a constant is zero. However, researchers should be careful while implementing Caputo derivatives in modeling biophysical systems. They should take into consideration that memory effects can make the computational task more inefficient. Also, Caputo derivatives are only defined for differentiable functions~\citep{AtSe13} which might make mathematical modeling of a biophysical system consisting of discontinuities with Caputo derivatives absurd. 

All in all, we have performed an extensive analysis of a commensurate system of fractional-order differential equations representing the dynamics of single-cell and two-coupled dML neurons. The next step forward is to consider a network of fractional-order dML neurons, for example Erd\"os-Renyi or ring-star. Another potential step is to consider higher-order coupling in these networks, also externally perturbed with different noise source, making the whole scenario stochastic. It would also be interesting to explore the dynamical responses of the three-dimensional slow-fast system of dML neurons~\citep{ScCa18, GhFa24} in a Caputo-type setting. We also beleive considering an incommensurate fractional-order system will induce more complexity in the model and is worth investigating.

\section*{Code Availability}
All computer codes are openly available in Github at: \url{https://github.com/indrag49/fractional-Order-dML}.


\appendix

\section{Software Implementation}
\label{app:Software}
In order to simulate the dynamics of a system of Caputo-type fractional order differential equations we employ \texttt{FdeSolver.jl} (v1.0.8), a \texttt{Julia} package for solving fractional differential equations, by Khalighi {\em et al.}~\citep{KhBe24,BeKh24}. This is a high-performing open-source \texttt{Julia} package built on the algorithms by Diethelm {\em et al.}~\citep{DiFo02, DiFo04} and the \texttt{MATLAB} routines by Garrappa~\citep{Ga18}. There are four main steps going on inside \texttt{FdeSolver.jl}:
\begin{enumerate}
    \item conversion of the fractional differential equations problem to a {\em Volterra integral equation},
    \item discretization of the Volterra integral equation via {\em product-integration} rules,
    \item numerical solution using {\em predictor-corrector} algorithms, and
    \item reduction of the computation cost using {\em fast Fourier transform}.
\end{enumerate}
Readers are encouraged to refer to~\citep{KhBe24} for more details. A typical function to solve a Caputo-type system of fractional differential equations looks like
\begin{tcolorbox}
\texttt{FDEsolver(F, tSpan, y0, $\beta$, par, h)}
\end{tcolorbox}
where \texttt{F} represents the right-hand side of the system of differential equations, denoted as a function. This should return a vector. The input \texttt{tSpan} represents the time span written as a vector \texttt{[initial time, final time]}. Next, we have a row vector of real numbers \texttt{y0} representing the initial values. Then, we have $\beta$ which is also a row vector of the order of the Caputo differential operators. Each element of this vector determines the order of one differential equation from a system of fractional differential equations. If all the elements of $\beta$ are the same, then the system of fractional differential equations we are numerically simulating is commensurate, otherwise it is incommensurate. This vector can take both integer and numerical values. We keep all the elements of $\beta$ in the range $(0, 1]$. The row vector of real numbers \texttt{par} is a collection of parameters that govern the dynamics of the fractional differential equations. Finally \texttt{h} is a real scalar that sets the step size of the numerical simulation.

Once we simulate a system of fractional differential equations, we would need to visualize the results. For this, we first collect the simulation results into a CSV datafile (using \texttt{Julia}'s \texttt{DataFrames} and \texttt{CSV} packages) and then read the datafile using \texttt{Python}'s (v3.10.9) \texttt{Pandas} library, specifically the \texttt{pd.read\_csv()} function. After that, we visualize the results using the \texttt{matplotlib} library. All computer codes are openly available at: \url{https://github.com/indrag49/fractional-Order-dML}.

\bibliographystyle{apalike}
\bibliography{main}

\end{document}